\newtheorem{definition}{Definition}[section]
\newtheorem{lemma}[definition]{Lemma}
\newtheorem{proposition}[definition]{Proposition}
\newtheorem{theorem}[definition]{Theorem}
\newtheorem{remark}[definition]{Remark}
\newtheorem{example}[definition]{Example}
\DeclareMathOperator{\sign}{sgn}
\DeclareMathOperator{\real}{Re}
\DeclareMathOperator{\imag}{Im}
\DeclareMathOperator{\polylog}{Li}
\DeclareMathOperator{\sinc}{sinc}
\DeclareMathOperator{\erfi}{erfi}
\newcommand{\HT}{\mathbf{H}} 
\newcommand{\Hop}[1]{\mathcal{H}^{#1}}
\newcommand{\HopGeneric}{\mathcal{H}}
\newcommand{\inter}[1]{\textbf{\textsl{#1}}}
\newcommand{\lo}[1]{\underline{#1}}
\newcommand{\hi}[1]{\overline{#1}}
\title{Highest Cusped Waves for the Burgers-Hilbert equation}
\author{Joel Dahne\and Javier G\'omez-Serrano}
\begin{document}
\maketitle

\begin{abstract}
  In this paper we prove the existence of a periodic highest, cusped,
  traveling wave solution for the Burgers-Hilbert equation
  \(f_{t} + f f_{x} = \HT[f]\) and give its asymptotic behaviour at
  \(0\). The proof combines careful asymptotic analysis and a
  computer-assisted approach.
\end{abstract}

\section{Introduction}
\label{sec:introduction}
The Burgers-Hilbert equation \cite{hunter18:burger-hilbert} is a
nonlinear wave model, in the periodic setting given by
\begin{equation}
  \label{eq:BH}
  f_{t} + f f_{x} = \HT[f],\quad \text{ for } (x, t) \in \mathbb{T} \times \mathbb{R}.
\end{equation}
Here \(\HT\) is the Hilbert transform which, for
\(f: \mathbb{T} \to \mathbb{R}\), is defined by
\begin{equation*}
  \HT[f](x) = \frac{1}{2\pi}p.v. \int_{-\pi}^{\pi}\cot\left(\frac{x - y}{2}\right)f(y)\ dy,\quad
  \widehat{\HT[f]}(k) = -i \sign(k)\widehat{f}(k).
\end{equation*}
The equation was first used by Marsden and Weinstein in 1983 as a
second order approximation for the evolution of the boundary of a
simply connected vortex patch in two dimensions \cite{MARSDEN1983305}.
More recently Biello and Hunter used it to serve as an approximation
for small slope vorticity fronts \cite{biellohunter10:nonlinear}. The
validity of this approximation was recently proved
\cite{hunter21_approx_vortic_front_by_equat}.

For small initial data in \(H^{2}(\mathbb{R})\), estimates for the
lifespan were proved by Hunter and Ifrim
\cite{hunterifrim12:enhanced}, see also~\cite{hunter2015long}.

Global existence of weak solutions for initial data in
\(L^{2}(\mathbb{R})\) was established by Bressan and Nguyen, in which
case the solution lies in
\(L^{2}(\mathbb{R}) \cap L^{\infty}(\mathbb{R})\) for \(t > 0\)
\cite{Bressan2014}. Bressan and Zhang constructed locally in time
piecewise smooth solutions with a single, logarithmic, shock
\cite{Bressan2017}. Stability and uniqueness of these solutions in a
larger class of solutions were shown by Krupa and Vasseur
\cite{Krupa2020}.

Numerical simulations have shown formation of shocks in finite time
\cite{biellohunter10:nonlinear, hunter18:burger-hilbert}. Castro,
Córdoba and Gancedo proved finite time blow up of the
\(C^{1,\delta}\)-norm with \(0 < \delta < 1\) for initial data
\(f_{0} \in L^{\infty}(\mathbb{R}) \cap C^{1,\delta}(\mathbb{R})\)
satisfying that there exists a point \(x_{0}\) with
\(H[f_{0}](x_{0}) > 0\) and
\(u_{0}(x_{0}) \geq (32\pi\|u_{0}\|^{2}_{L^{2}(\mathbb{R})})^{1 / 3}\)
\cite{Castro2010}. Saut and Wang proved shock formation in finite time
\cite{saut20:whith}. Solutions that develop an asymptotic self-similar
shock at a single point with an explicitly computable blowup profile
were constructed by Yang \cite{yang20:shock_burger_hilber}.

The Burgers-Hilbert equation occurs as a special case in the family of
fractional KdV equations given by
\begin{equation}
  \label{eq:fkdv}
  f_{t} + f f_{x} = |D|^{\alpha}f_{x}.
\end{equation}
Here
\begin{equation*}
  \widehat{|D|^{\alpha}f}(\xi) = |\xi|^{\alpha}\widehat{f}(\xi)
\end{equation*}
and the parameter \(\alpha\) may in general take any real value. For
\(\alpha = 2\) and \(\alpha = 1\) we get the classical KdV and
Benjamin-Ono equations. For \(\alpha = -1\) it reduces to the
Burgers-Hilbert equation.

For \(\alpha \in (-1, 0)\) the fractional KdV equation exhibits finite
time blow \cite{Castro2010,Hur2012}. That this blowup happens in terms
of wave breaking was proved for \(\alpha \in (-1, -1 / 3)\) by Hur and
Tao \cite{Hur2014,Hur2017} and for \(\alpha \in (-1, 0)\) by Oh and
Pasqualotto \cite{oh21:_gradien_burger}. See \cite{Klein2015} for a
numerical study and also \cite{chickering21:_asymp_burger}. For other
results see e.g. \cite{ehrnstromwang19:enhanced} for existence time,
\cite{Riao2021} for well-posedness and \cite{klein20:_kortew_vries,
  Klein2017} for some results for related equations.

In this work we are concerned with traveling wave solutions of the
Burgers-Hilbert equation \eqref{eq:BH}. The study of traveling waves
is an important topic in fluid dynamics, see e.g. \cite{Haziot2022}
for a recent overview of traveling water waves. The traveling wave
assumption \(f(x, t) = \varphi(x - ct)\), where \(c > 0\) denotes the
wave speed, gives us
\begin{equation}
  \label{eq:bh-wave}
  -c \varphi' + \varphi\varphi' = \HT[\varphi].
\end{equation}
The Burgers-Hilbert equation has an analytic branch of even,
zero-mean, \(2\pi\)-periodic, smooth traveling wave solutions
bifurcating from constant solutions \cite{hunter18:burger-hilbert}. If
we let \(\epsilon\) be the bifurcation parameter and
\(\varphi_{\epsilon}\), \(c_{\epsilon}\) be the solution with its
corresponding wave speed, then as \(\epsilon \to 0\) we have
\begin{align*}
  \varphi_{\epsilon}(x) &= \epsilon \cos(x) + \mathcal{O}(\epsilon^{2}),\\
  c_{\epsilon} &= -1 + \mathcal{O}(\epsilon^{2}).
\end{align*}
Castro, Córdoba and Zheng \cite{castro2021stability} proved that this
branch exists in the range \((0, \epsilon^{*})\) with
\(\epsilon^{*} \sim 0.23\) and fails to exist for
\(\epsilon > \frac{2}{e}\). Moreover they proved an enhanced lifespan
estimate for perturbations of \(\varphi_{\epsilon}\) compared to the
results in \cite{hunterifrim12:enhanced}. In
\cite{hunter18:burger-hilbert}, Hunter remarks that this branch
presumably ends in a highest wave which is not smooth at its crest, as
is common for equations of this type. In this paper we prove the
existence of a highest cusped wave and give its behaviour at the
crest. More precisely we prove the following theorem:
\begin{theorem}
  \label{thm:main}
  There is a \(2\pi\)-periodic traveling wave \(\varphi\) of
  \eqref{eq:bh-wave}, which behaves asymptotically at \(x = 0\) as
  \begin{equation*}
    \varphi(x) = c + \frac{1}{\pi}|x|\log|x| + \mathcal{O}(|x|\sqrt{\log|x|}).
  \end{equation*}
\end{theorem}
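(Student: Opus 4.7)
The plan is to recast \eqref{eq:bh-wave} as a fixed-point equation for a small correction to an explicit approximate cusped profile and then apply a Newton--Kantorovich / Banach fixed-point scheme, with the quantitative hypotheses verified partly by analytic asymptotics near the crest and partly by rigorous computer-assisted interval bounds away from it. Set $u = c - \varphi$: since $\HT$ annihilates constants, the equation becomes $u\,u' = -\HT[u]$, and integrating from $0$ with the boundary condition $u(0) = 0$ (the signature of a highest wave, where $\varphi$ touches $c$) yields the first-integral form
\begin{equation*}
  u(x)^{2} = -2\int_{0}^{x} \HT[u](y)\,dy.
\end{equation*}
After an amplitude/spatial normalization fixing the logarithmic prefactor, the highest-wave problem is equivalent to finding an even, $2\pi$-periodic, nonnegative $u$ satisfying this integral identity with $u(x) \sim -\tfrac{1}{\pi}|x|\log|x|$ at $0$.

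Next, I would build an explicit approximation $u_{0}$ that already exhibits the correct leading singularity: a finite linear combination of a cusped part (e.g.\ $-\tfrac{1}{\pi}|x|\log|x|$ together with subleading terms of the form $|x|$, $|x|\log^{k}|x|$, $|x|^{1+\beta}$) and a trigonometric polynomial capturing the smooth periodic background. Writing $u = u_{0} + v$ and substituting into the integral form isolates a fixed-point problem $v = \mathcal{T}(v)$, where $\mathcal{T}$ inverts the quadratic/linear structure around $u_{0}$ and incorporates the Hilbert transform of $v$. The aim is to prove that $\mathcal{T}$ is a contraction on a small closed ball in a weighted space whose norm penalizes growth faster than $|x|\sqrt{|\log|x||}$; the claimed error term in the theorem is precisely the size of this ball.

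The main technical ingredients are then: (a) explicit formulas and sharp bounds for the Hilbert transform of the cusped basis functions $|x|^{\alpha}\log^{k}|x|$, so that the defect $\mathcal{T}(0)$ coming from $u_{0}$ can be controlled in the weighted norm; (b) invertibility and an operator-norm bound for the linearization $D\mathcal{T}(0)$, which near the crest amounts to bounding a Hilbert-type integral weighted by the near-singular factor $1/u_{0}$; and (c) rigorous interval-arithmetic estimates on the bounded region $|x| \gtrsim \delta$ that match the analytic asymptotic bounds valid on $|x| \le \delta$. I expect (b) to be the main obstacle: one must ensure that the ostensibly singular factor $1/u_{0}(x) \sim -\pi/(|x|\log|x|)$ still yields a bounded operator on the chosen weighted space, and extract a numerical contraction constant strictly smaller than $1$. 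Once this is in place, the fixed point $v$ produces $\varphi = c - u_{0} - v$, and the asymptotic statement of Theorem~\ref{thm:main} follows by reading off the prescribed leading term of $u_{0}$ and combining it with the $\mathcal{O}(|x|\sqrt{|\log|x||})$ bound on $v$ enforced by the weighted norm.
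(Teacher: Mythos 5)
Your high-level strategy matches the paper: substitute $u = c - \varphi$, integrate $uu' = -\HT[u]$ to the first-integral form $\tfrac{1}{2}u^2 = -\HopGeneric[u]$, split $u$ into an explicit approximate solution plus a small correction in a weighted space with weight $\sim |x|\sqrt{|\log|x||}$, reduce to a fixed-point problem, and verify the contraction hypotheses by a combination of analytic asymptotics near $x=0$ and interval arithmetic elsewhere. The three quantitative ingredients you list (defect norm, linearization norm, contraction) are exactly the paper's $\delta_0$, $D_0$, and the inequality $\delta_0 < (1-D_0)^2/(4n_0)$.

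However, there is a genuine gap in your construction of $u_0$. You propose to build the cusped part of $u_0$ from functions of the form $|x|^{\alpha}\log^{k}|x|$ and $|x|^{1+\beta}$, and then compute explicit Hilbert transforms of these. This does not work in the periodic setting: these functions are not $2\pi$-periodic, so their periodic Hilbert transform has no closed form and the periodization introduces spurious singularities at $x = \pm\pi$. The paper instead uses Clausen functions $C_s(x) = \sum_{n\ge1} n^{-s}\cos(nx)$ and their $s$-derivatives $C_s^{(\beta)}$, which are periodic, reproduce the required $|x|\log|x|$ singularity (via $\tilde C_2^{(1)}(x) \sim -\tfrac{\pi}{2}|x|\log|x|$), and satisfy $\HopGeneric[\tilde C_s] = -\tilde C_{s-1}$ exactly; this exact commutation is what makes the defect $\HopGeneric[u_0] + \tfrac12 u_0^2$ analytically tractable. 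Beyond the choice of basis, the paper also finds that the most naive Clausen ansatz (à la Enciso--G\'omez-Serrano--Vergara for the Whitham equation) fails here because the putative next-order correction $\tilde C_2$ contributes nothing to the relevant asymptotic order; they instead determine the exponents $s_j$ and coefficients $a_j$ by passing through the fractional KdV family and taking the limit $\alpha\to -1^{+}$. Your plan of attack would stall at point (a), since there is no clean dictionary between $|x|^\alpha \log^k|x|$ and the periodic Hilbert transform. Also, your remark about an ``amplitude/spatial normalization fixing the logarithmic prefactor'' is misleading: the equation forces the coefficient $\nu = -1/\pi$ (paper's Lemma 4.1), there is no free scaling to normalize. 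Finally, you underestimate the difficulty of the computer-assisted part: the defect $F$ attains its maximum at $x \approx 10^{-5000}$, so the interval-arithmetic verification is far from a routine match with the asymptotic regime.
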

\begin{remark}
  We don't expect the remainder term
  \(\mathcal{O}(|x|\sqrt{\log|x|})\) in Theorem \ref{thm:main} to be
  sharp. The estimate follows from the choice of our space.
\end{remark}
The notion of highest traveling waves exist for a large number of
equations. For the free boundary Euler equation Stokes argued that if
there exists a singular solution with a steady profile it must have an
interior angle of \(120^{\circ}\) at the crest
\cite{stokes2018mathematical}. This is known as the Stokes conjecture
and was proved in 1982 \cite{Amick1982}. For the Whitham equation
\cite{Whitham1967} the existence of a highest cusped traveling wave
was conjectured by Whitham in~\cite{Whitham1999}. Its existence,
together with its \(C^{1 / 2}\) regularity, was recently proved by
Ehrnström and Wahlén \cite{Ehrnstrm2019}. They conjectured that the
wave is convex between its crests and also its precise asymptotic
behaviour. This conjecture was proved by Enciso, Gómez-Serrano and
Vergara \cite{enciso2018convexity}. See also \cite{ehrnstrom22:_whith}
for some recent remarks. For the fractional KdV equations the
traveling waves assumption allows us to write the equation as
\begin{equation}
  \label{eq:fkdv-wave}
  -c \varphi' + \varphi\varphi' = |D|^{\alpha} \varphi'.
\end{equation}
There has recently been much progress related to highest waves to this
family of equations. For \(\alpha < -1\) Bruell and Dhara proved the
existence of highest traveling waves which are Lipschitz at their cusp
\cite{bruell18:_waves}. Very recently Ørke proved their existence for
\(-1 < \alpha < 0\) for the (inhomogeneous) fractional KdV equations
as well as the fractional Degasperis-Procesi equations, together with
their optimal \(-\alpha\)-Hölder regularity
\cite{orke22:_highes_kortew}. Hildrum and Xue prove a similar result
for another class of equations, including the (homogeneous) fractional
KdV equations for \(-1 < \alpha < 0\)
\cite{hildrum22:_period_holder_kdv}.

The results in \cite{Ehrnstrm2019, bruell18:_waves,
  orke22:_highes_kortew, hildrum22:_period_holder_kdv} are all based
on global bifurcation arguments, bifurcating from the constant
solution and proving that the branch must end in a highest wave which
is not smooth at its crest. The proof of the convexity of the highest
cusped wave for the Whitham equation in \cite{enciso2018convexity}
uses a completely different approach. The problem is first rephrased
as a fixed point problem, the existence and properties of the fixed
point is then related to inequalities for certain constants that
appear in the reduction. These inequalities are then checked using a a
computer assisted proof. In this paper we use a similar approach for
proving Theorem \ref{thm:main}.

The ansatz \(\varphi(x) = c - u(x)\) allows us to rewrite
\eqref{eq:bh-wave} as an equation that does not explicitly depend on
the wave speed \(c\). Proving the existence of a solution \(u\) can be
rewritten as a fixed point problem by considering the ansatz
\begin{equation*}
  u(x) = u_{0}(x) + w(x)v(x)
\end{equation*}
where \(u_{0}(x)\) is an explicit, carefully chosen, approximate
solution and \(w(x)\) is an explicit weight factor. Proving the
existence of a fixed point can be reduced to checking an inequality
involving three constants, \(D_{0}\), \(\delta_{0}\) and \(n_{0}\),
that only depend on the choice of \(u_{0}\) and \(w\), see Proposition
\ref{prop:contraction}. This inequality is checked by bounding
\(D_{0}\), \(\delta_{0}\) and \(n_{0}\) using a computer assisted
proof, see Lemmas \ref{lemma:bh-bounds-n}, \ref{lemma:bh-bounds-delta}
and \ref{lemma:bh-bounds-D}. These bounds are highly non-trivial, in
particular \(\delta_{0}\) is given by the supremum of a function on
the interval \([0, \pi]\) which attains its maximum around
\(10^{-5000}\). A plot of the function \(u\) is given in Figure
\ref{fig:BH-u}.

\begin{figure}
  \centering
  \includegraphics[width=0.7\textwidth]{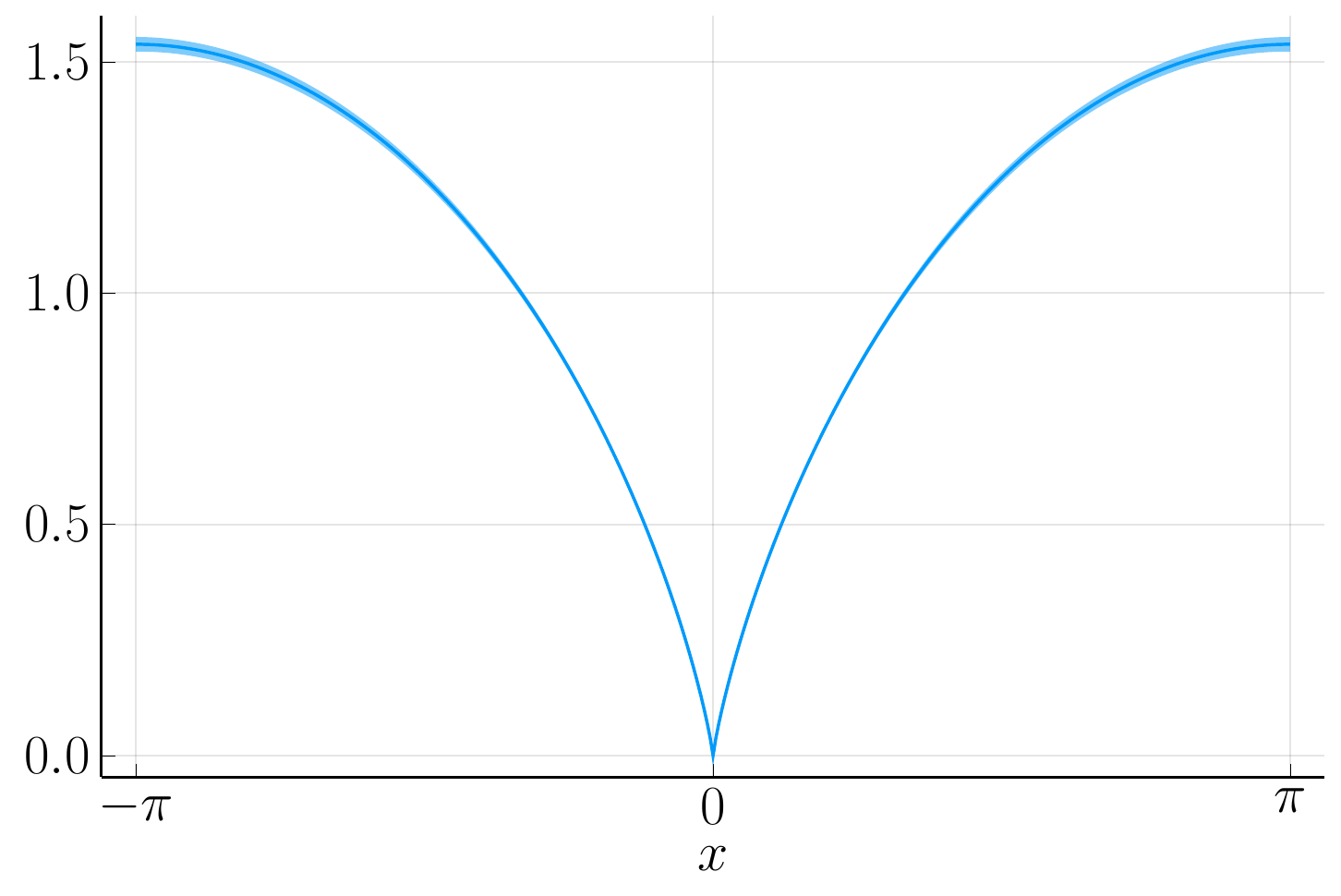}
  \caption{An enclosure of the function \(u(x)\) on the interval
    \([-\pi, \pi]\). The thin line is the approximation \(u_{0}(x)\)
    and the width of the thick line is computed using the bound of
    \(\|v\|_{L^{\infty}(\mathbb{T})}\).}
  \label{fig:BH-u}
\end{figure}

One of the key difficulties is the construction of the approximate
solution \(u_{0}\). Due to the singularity at \(x = 0\) it is not
possible to use a trigonometric polynomial alone, it would converge
very slowly and have the wrong asymptotic behaviour. Pure products of
powers and logarithms, \(|x|^{a}\log^{b} |x|\), have the issue that
they are not periodic and do not interact well with the operator
\(\HopGeneric\). Instead we take inspiration from the construction in
\cite{enciso2018convexity} and consider a combination of trigonometric
polynomials and Clausen functions of different orders, defined as
\begin{equation*}
  C_{s}(x) = \sum_{n = 1}^{\infty}\frac{\cos(nx)}{n^{s}},\quad
  S_{s}(x) = \sum_{n = 1}^{\infty}\frac{\sin(nx)}{n^{s}},
\end{equation*}
for \(s > 1\) and by analytic continuation otherwise. We also make use
of their derivatives with respect to the order, for which we use the
notation
\begin{equation*}
  C_{s}^{(\beta)}(x) := \frac{d^{\beta}}{ds^{\beta}} C_{s}(x),\quad
  S_{s}^{(\beta)}(x) := \frac{d^{\beta}}{ds^{\beta}} S_{s}(x).
\end{equation*}
These functions are \(2\pi\)-periodic, non-analytic at \(x = 0\) and
behave well with respect to \(\HopGeneric\). In particular
\(C_{2}^{(1)}(x) - C_{2}^{(1)}(0) \sim |x|\log |x|\), which
corresponds to the behaviour we expect in Theorem \ref{thm:main}.
However, directly applying the ideas from \cite{enciso2018convexity}
for the construction does not work since it is not easy to determine
the next term in the expansion. Instead we go through the fractional
KdV equations for \(-1 < \alpha < 0\), for this family of equations
the same ideas do work and we then study the limit \(\alpha \to -1\).

\begin{remark}
  From the representation of \(u_{0}\) and the bounds for \(v\), given
  in Section \ref{sec:proof-main-theorem}, it is possible to compute
  quantitative bounds for properties of the solution.

  It is possible to enclose the first Fourier coefficient of the
  solution \(u\), and hence also \(\varphi\). For \(u_{0}\) the first
  Fourier coefficient is given by \(-0.54771699...\), taking into
  account the bounds for \(v\) we can compute the enclosure
  \([0.534, 0.561]\) for the first Fourier coefficient of \(\varphi\).
  This agrees with the results from Castro, Córdoba and Zheng
  \cite{castro2021stability} that the branch of solutions break down
  for \(\epsilon\) somewhere between \(\epsilon^{*} \sim 0.23\) and
  \(\frac{2}{e}\). It also agrees with their (unpublished) numerical
  results indicating that the branch ends around \(\epsilon = 0.548\)
  \cite{private-castro2021}.

  It is also possible to enclose the mean of \(u\), which gives the
  wavespeed \(c\) for a corresponding zero-mean \(\varphi\). From the
  expression of \(u_{0}\) (see \eqref{eq:u0}) it is straightforward to
  compute the mean \(1.11041\dots\) of \(u_{0}\). Taking into account
  the bounds for \(v\) we get the enclosure \([1.1, 1.121]\) for the
  mean of \(u\).
\end{remark}

An important part of our work is the interplay between traditional
mathematical tools and rigorous computer calculations. Traditional
numerical methods typically only compute approximate results, to be
able to use the results in a proof we need the results to be
rigorously verified. The basis for rigorous calculations is interval
arithmetic, pioneered by Moore in the 1970's \cite{Moore1979}. Due to
improvements in both computational power as well as well as great
improvements in software it has become possible to use computer
assisted tools in many more problems. The main idea with interval
arithmetic is to do arithmetic not directly on real numbers but on
intervals with computer representable endpoints. Given a function
\(f:\mathbb{R} \to \mathbb{R}\), an interval extension of \(f\) is an
extension to intervals satisfying that for an interval \(\inter{x}\),
\(f(\inter{x})\) is an interval satisfying \(f(x) \in f(\inter{x})\)
for all \(x \in \inter{x}\). In particular this allows us to prove
inequalities for the function \(f\), for example the right endpoint of
\(f(\inter{x})\) gives an upper bound of \(f\) on the interval
\(\inter{x}\). For an introduction to interval arithmetic and rigorous
numerics we refer the reader to the books \cite{Moore1979,Tucker2011}
and to the survey \cite{GomezSerrano2019} for a specific treatment of
computer assisted proofs in PDE. For all the calculations in this
paper we make use of the Arb library \cite{Johansson2017arb} for ball
(intervals represented as a midpoint and radius) arithmetic. It has
good support for many of the special functions we use
\cite{Johansson2014hurwitz, Johansson2014thesis, Johansson2019},
Taylor arithmetic (see e.g. \cite{Johansson2015reversion}) as well as
rigorous integration~\cite{Johansson2018numerical}.

The paper is organized as follows. In Section
\ref{sec:reduct-fixed-point} we reduce the proof of Theorem
\ref{thm:main} to a fixed point problem. In Section
\ref{sec:clausen-functions} we give a brief overview of properties of
the Clausen functions that are relevant for the construction of
\(u_{0}\), in Section \ref{sec:construction} we give the construction
of \(u_{0}\). Section \ref{sec:bounding-n-delta} is devoted to the
approach for bounding \(n_{0}\) and \(\delta_{0}\), Section
\ref{sec:analysis-T} to studying the linear operator that appears in
the construction of the fixed point problem. The computer assisted
proofs giving bounds for \(n_{0}\), \(\delta_{0}\) and \(D_{0}\) are
given in Section \ref{sec:bounds-for-values}. Finally we give the
proof of Theorem \ref{thm:main} in Section
\ref{sec:proof-main-theorem}.

Three appendices are given at the end of the paper. Appendix
\ref{sec:removable-singularities} gives some technical details for how
to compute enclosures of functions around removable singularities.
Appendix \ref{sec:comp-encl-claus} is concerned with computing
enclosures of the Clausen functions and Appendix
\ref{sec:rigorous-integration} with the rigorous numerical integration
needed for bounding \(D_{0}\).

\section{Reduction to a fixed point problem}
\label{sec:reduct-fixed-point}
In this section we reduce the problem of proving Theorem
\ref{thm:main} to proving the existence of a fixed point for a certain
operator. We start with the following lemma which motivates the notion
of highest wave (see also~\cite[Theorem
3.4]{hildrum22:_period_holder_kdv})
\begin{lemma}
  Let \(\varphi \in C^{1}\) be a nonconstant, even solution of
  \eqref{eq:bh-wave} which is nondecreasing on \((-\pi, 0)\), then
  \begin{equation*}
    \varphi' > 0 \quad \text{ and }\quad \varphi < c
  \end{equation*}
  on \((-\pi, 0)\).
\end{lemma}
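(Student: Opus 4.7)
The plan is to rewrite \eqref{eq:bh-wave} as $(\varphi - c)\varphi' = \HT[\varphi]$ and then to show that $\HT[\varphi]$ is strictly negative on $(-\pi, 0)$. Since the hypothesis gives $\varphi' \geq 0$ there, the equation will then force both factors on the left to be nonzero with opposite signs, yielding $\varphi' > 0$ and $\varphi < c$ simultaneously as required.

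For the sign of $\HT[\varphi]$, I would use that $\HT$ annihilates constants and that $\mathrm{p.v.}\int_{-\pi}^{\pi}\cot((x-y)/2)\,dy = 0$ to symmetrize the kernel. The change of variable $z = y - x$ combined with reflecting the $z < 0$ half yields
\[
  \HT[\varphi](x) = -\frac{1}{2\pi}\,\mathrm{p.v.}\int_0^{\pi}\cot(z/2)\bigl[\varphi(x+z) - \varphi(x-z)\bigr]\,dz,
\]
where $\cot(z/2) > 0$ on $(0,\pi)$. Evenness and monotonicity together say that $\varphi(\theta) = g(d(\theta))$, with $d(\theta)$ the distance from $\theta$ to $2\pi\mathbb{Z}$ and $g:[0,\pi]\to\mathbb{R}$ nonincreasing, and a short case analysis on where $x+z$ and $x-z$ lie modulo $2\pi$ shows that $d(x+z) \leq d(x-z)$ for every $x \in (-\pi, 0)$ and $z \in (0,\pi)$. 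Therefore $\varphi(x+z) \geq \varphi(x-z)$, the integrand is nonnegative, and $\HT[\varphi] \leq 0$ on $(-\pi, 0)$.

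For strict negativity, suppose $\HT[\varphi](x_0) = 0$ at some $x_0 \in (-\pi, 0)$. The integrand above is continuous and nonnegative on $(0, \pi)$, so it must vanish identically, giving $\varphi(x_0 + z) = \varphi(x_0 - z)$ for every $z$, i.e., $\varphi$ is symmetric about $x_0$. Composing this reflection with evenness yields $\varphi(\theta + 2x_0) = \varphi(\theta)$, so the closed additive group of periods of $\varphi$ contains both $2|x_0|$ and $2\pi$, hence also $2\pi - 2|x_0|$; since the smaller of $2|x_0|$ and $2\pi - 2|x_0|$ is at most $\pi$, $\varphi$ is either constant (if that group is dense in $\mathbb{R}$) or has some minimal period $T_0 \leq \pi$. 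In the latter case, on the interval $[-T_0, 0] \subseteq [-\pi, 0]$ the function $\varphi$ is nondecreasing with $\varphi(-T_0) = \varphi(0)$, forcing it to be constant there and hence everywhere by periodicity, contradicting nonconstancy. The main obstacle I anticipate is the (elementary but multi-case) verification of $d(x+z) \leq d(x-z)$, which splits according to whether $x+z$ or $x-z$ needs to be shifted by $2\pi$ into the fundamental domain, together with the rigidity step above.
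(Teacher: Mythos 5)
Your proof is correct, but it takes a genuinely different route from the paper's. The paper symmetrizes the Hilbert transform over $(-\pi,0)$ using evenness and then \emph{integrates by parts}, obtaining
\begin{equation*}
\HT[\varphi](x) = \frac{1}{2\pi}\int_{-\pi}^{0}\left(\log\left|\sin\tfrac{x-y}{2}\right| - \log\left|\sin\tfrac{x+y}{2}\right|\right)\varphi'(y)\,dy,
\end{equation*}
so that the sign follows from $\varphi'\geq 0$ together with an elementary inequality on the log-kernel, and strict negativity is immediate because nonconstancy forces $\varphi'>0$ on some open subset of $(-\pi,0)$. You instead keep the $\cot$ kernel, symmetrize via $z=y-x$, and read off the sign from the monotonicity of $\varphi$ directly (through the observation that $\varphi$ is a nonincreasing function of the distance to $2\pi\mathbb{Z}$). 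This is perfectly valid: after the symmetrization the integrand $\cot(z/2)[\varphi(x+z)-\varphi(x-z)]$ extends continuously to $z=0$ since $\varphi\in C^1$, so the ``p.v.'' is vacuous and the nonnegativity/vanishing dichotomy applies. Your strict-negativity argument is also correct but noticeably heavier than the paper's: you invoke a rigidity argument (zero integral of a nonnegative integrand $\Rightarrow$ reflection symmetry about $x_0$ $\Rightarrow$ translation invariance by $2x_0$ $\Rightarrow$ a period $\leq\pi$ $\Rightarrow$ constancy via monotonicity), whereas the paper gets strictness for free from $\varphi'>0$ on an open set against a strictly signed kernel. The trade-off is that your argument never differentiates the Hilbert transform, relying only on the monotone/even structure of $\varphi$ itself, while the paper's integration-by-parts reformulation is shorter and makes the role of $\varphi'$ transparent.
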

\begin{proof}
  We start by proving that under these assumptions
  \(\HT[\varphi] < 0\). Since \(\varphi\) is even we can write the
  Hilbert transform of \(\varphi\) as
  \begin{equation*}
    \HT[\varphi](x) = \frac{1}{2\pi} \int_{-\pi}^{0}\left(\cot\left(\frac{x - y}{2}\right) + \cot\left(\frac{x + y}{2}\right)\right)\varphi(y)\ dy.
  \end{equation*}
  Integration by parts gives us
  \begin{equation*}
    \HT[\varphi](x) = \frac{1}{2\pi} \int_{-\pi}^{0}\left(\log\left|\sin\left(\frac{x - y}{2}\right)\right| - \log\left|\sin\left(\frac{x + y}{2}\right)\right|\right)\varphi'(y)\ dy.
  \end{equation*}
  For \(x \in (-\pi, 0)\) and \(y \in (-\pi, 0)\) we have
  \begin{equation*}
    \left|\sin\left(\frac{x - y}{2}\right)\right| < \left|\sin\left(\frac{x + y}{2}\right)\right|.
  \end{equation*}
  For \(y > x\) this follows from that \(|\sin(x)|\) is increasing as
  a distance to a multiple of \(\pi\) and that \(\frac{x - y}{2}\) is
  closer to zero than \(\frac{x + y}{2}\) is to zero or \(-\pi\), the
  case \(y < x\) can be reduced to the previous case by switching
  \(x\) and \(y\) and using that \(\sin\) is odd. It follows that
  \begin{equation*}
    \log\left|\sin\left(\frac{x - y}{2}\right)\right| - \log\left|\sin\left(\frac{x + y}{2}\right)\right| < 0.
  \end{equation*}
  Since \(\varphi' \geq 0\) we get \(\HT[\varphi] \leq 0\).
  Furthermore \(\varphi\) is nonconstant and continuous so we have
  \(\varphi' > 0\) in some open set, giving us \(\HT[\varphi] < 0\).

  Now, writing \eqref{eq:bh-wave} as
  \begin{equation*}
    \varphi'(\varphi - c) = \HT[\varphi]
  \end{equation*}
  and using \(\HT[\varphi] < 0\) we get
  \begin{equation*}
    \varphi'(\varphi - c) < 0.
  \end{equation*}
  Which implies that \(\varphi' > 0\) and \(\varphi < c\).
\end{proof}
As a consequence, any continuous, nonconstant, even function which is
nondecreasing on \((-\pi, 0)\) that satisfy \eqref{eq:bh-wave} almost
everywhere must satisfy \(\varphi \leq c\). The maximal possible
height is thus given by \(c\) and due to the function being even and
nondecreasing on \((-\pi, 0)\) the maximal height has to be attained
at \(x = 0\).

Now, the ansatz \(\varphi(x) = c - u(x)\) inserted in
\eqref{eq:bh-wave} gives an equation which does not explicitly depend
on the wave speed \(c\). Indeed, inserting this gives us
\begin{equation}
  \label{eq:main-derivative}
  uu' = -\HT[u].
\end{equation}
Note that a solution of this equation gives a solution of
\eqref{eq:bh-wave} for any wave speed \(c\). This is to be expected
due to the Galilean change of variables
\begin{equation*}
  \varphi \mapsto \varphi + \gamma,\ c \mapsto c + \gamma
\end{equation*}
which leaves \eqref{eq:bh-wave} invariant. In particular, taking \(c\)
equal to the mean of \(u\) gives a zero mean solution. For a highest
wave we expect to have \(\varphi(0) = c\), giving us \(u(0) = 0\).
Integrating \eqref{eq:main-derivative} gives us
\begin{equation}
  \label{eq:main}
  \frac{1}{2}u^{2} = -\HopGeneric[u].
\end{equation}
Here \(\HopGeneric\) is the operator
\begin{equation}
  \label{eq:H}
  \HopGeneric[f](x) = \frac{1}{\pi}p.v. \int_{-\pi}^{\pi}\left(
    \log\left|\sin\left(\frac{x - y}{2}\right)\right| - \log\left|\sin\left(\frac{y}{2}\right)\right|
  \right)f(y)\ dy.
\end{equation}
It is the integral of the Hilbert transform with the constant of
integration taken such that \(\HopGeneric[f](0) = 0\), this ensures
that any solution of \eqref{eq:main} satisfies \(u(0) = 0\). Note that
any solution of \eqref{eq:main} is a solution of
\eqref{eq:main-derivative} and hence gives a solution to
\eqref{eq:bh-wave}.

To reduce the problem to a fixed point problem we write \(u\) as one,
explicit, approximate solution of \eqref{eq:main} and one unknown
term. More precisely we make the ansatz
\begin{equation}
  \label{eq:main-ansatz}
  u(x) = u_{0}(x) + w(x)v(x)
\end{equation}
where \(u_{0}(x)\) is an explicit, carefully chosen, approximate
solution of \eqref{eq:main} and
\(w(x) = x\sqrt{\log\left(1 + \frac{1}{|x|}\right)}\). By taking
\(u_{0}(x) \sim \frac{1}{\pi}|x|\log|x|\), proving Theorem
\ref{thm:main} reduces to proving existence of
\(v \in L^{\infty}(\mathbb{T})\) such that the given ansatz is a
solution of \eqref{eq:main}.

Inserting the ansatz \eqref{eq:main-ansatz} into \eqref{eq:main} gives
us
\begin{equation*}
  \frac{1}{2}(u_{0} + wv)^{2} = -\HopGeneric[u_{0} + wv]\\
  \iff \frac{1}{2}u_{0}^{2} + u_{0}wv + \frac{1}{2}w^{2}v^{2} = -\HopGeneric[u_{0}] - \HopGeneric[wv].
\end{equation*}
By collecting all the linear terms in \(v\) we can write this as
\begin{equation*}
  u_{0}wv + \HopGeneric[wv] = -\HopGeneric[u_{0}] - \frac{1}{2}u_{0}^{2} - \frac{1}{2}w^{2}v^{2}\\
  \iff v + \frac{1}{wu_{0}}\HopGeneric[wv] = -\frac{1}{wu_{0}}\left(\HopGeneric[u_{0}] + \frac{1}{2}u_{0}^{2}\right) - \frac{w}{2u_{0}}v^{2}.
\end{equation*}
Now let \(T\) denote the operator
\begin{equation}
  \label{eq:T_alpha}
  T[v] = -\frac{1}{wu_{0}}\HopGeneric[wv].
\end{equation}
Denote the weighted defect of the approximate solution \(u_{0}(x)\) by
\begin{equation*}
  F(x) = \frac{1}{w(x)u_{0}(x)}\left(\HopGeneric[u_{0}](x) + \frac{1}{2}u_{0}(x)^{2}\right),
\end{equation*}
and let
\begin{equation*}
  N(x) = \frac{w(x)}{2u_{0}(x)}.
\end{equation*}
Then we can write the above as
\begin{equation*}
  (I - T)v = -F - Nv^{2}.
\end{equation*}
Assuming that \(I - T\) is invertible we can rewrite this as
\begin{equation}
  \label{eq:G}
  v = (I - T)^{-1}\left(-F - Nv^{2}\right) =: G[v].
\end{equation}
Hence proving the existence of \(v\) such that that \(u_{0} + wv\) is
a solution to \eqref{eq:main} reduces to proving the existence of a
fixed point of the operator \(G\).

Next we reduce the problem of proving that \(G\) has a fixed point to
checking an inequality for three numbers that depend only on the
choice of \(u_{0}\) and \(w\). We let \(\|T\|\) denote the
\(L^{\infty}(\mathbb{T}) \to L^{\infty}(\mathbb{T})\) norm of a linear
operator \(T\).
\begin{proposition}
  \label{prop:contraction}
  Let \(D_{0} = \|T\|\),
  \(\delta_{0} = \|F\|_{L^{\infty}(\mathbb{T})}\) and
  \(n_{0} = \|N\|_{L^{\infty}(\mathbb{T})}\). If \(D_{0} < 1\) and
  they satisfy the inequality
  \begin{equation*}
    \delta_{0} < \frac{(1 - D_{0})^{2}}{4n_{0}}
  \end{equation*}
  then for
  \begin{equation*}
    \epsilon = \frac{1 - D_{0} - \sqrt{(1 - D_{0})^{2} - 4\delta_{0}n_{0}}}{2n_{0}}
  \end{equation*}
  and
  \begin{equation*}
    X_{\epsilon} = \{v \in L^{\infty}(\mathbb{T}): v(x) = v(-x), \|v\|_{L^{\infty}(\mathbb{T})} \leq \epsilon\}
  \end{equation*}
  we have
  \begin{enumerate}
  \item \(G(X_{\epsilon}) \subseteq X_{\epsilon}\);
  \item
    \(\|G[v] - G[w]\|_{L^{\infty}(\mathbb{T})} \leq k_{0}\|v -
    w\|_{L^{\infty}(\mathbb{T})}\) with \(k_{0} < 1\) for all
    \(v, w \in X_{\epsilon}\).
  \end{enumerate}
\end{proposition}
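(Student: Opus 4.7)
The plan is a standard Banach fixed-point argument built on a Neumann series inversion of $I - T$, followed by verifying that the quadratic algebraic condition on $D_{0}$, $\delta_{0}$, $n_{0}$ forces $G$ to be a self-map and a strict contraction on the specified ball.

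First I would use the hypothesis $D_{0} = \|T\| < 1$ to write $(I - T)^{-1} = \sum_{k \geq 0} T^{k}$ as a bounded operator on $L^{\infty}(\mathbb{T})$, with operator norm at most $(1 - D_{0})^{-1}$. From the defining formula $G[v] = (I - T)^{-1}(-F - Nv^{2})$ this gives the basic estimate
\begin{equation*}
  \|G[v]\|_{L^{\infty}(\mathbb{T})} \leq \frac{1}{1 - D_{0}}\left(\delta_{0} + n_{0}\|v\|_{L^{\infty}(\mathbb{T})}^{2}\right).
\end{equation*}
I would also observe that $u_{0}$, $w$ and the kernel of $\mathcal{H}$ produce operators that preserve the even subspace (this is the reason the symmetry class $v(x) = v(-x)$ is natural), so $G$ maps even functions to even functions; the non-trivial content is the norm bound.

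For the self-mapping property (1), I would ask for which radii $r$ the estimate above gives $\|G[v]\|_{L^{\infty}(\mathbb{T})} \leq r$ whenever $\|v\|_{L^{\infty}(\mathbb{T})} \leq r$. This reduces to
\begin{equation*}
  n_{0} r^{2} - (1 - D_{0}) r + \delta_{0} \leq 0,
\end{equation*}
a quadratic with positive leading coefficient whose discriminant is $(1 - D_{0})^{2} - 4 \delta_{0} n_{0}$. The hypothesis $\delta_{0} < (1 - D_{0})^{2}/(4 n_{0})$ makes the discriminant strictly positive, and the two roots
\begin{equation*}
  r_{\pm} = \frac{1 - D_{0} \pm \sqrt{(1 - D_{0})^{2} - 4 \delta_{0} n_{0}}}{2 n_{0}}
\end{equation*}
bracket the interval of admissible radii. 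The proposition's $\epsilon$ is exactly $r_{-}$, so the quadratic inequality is saturated there and (1) holds.

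For the contraction (2), I would expand $v^{2} - w^{2} = (v + w)(v - w)$, so that
\begin{equation*}
  G[v] - G[w] = -(I - T)^{-1}\bigl(N (v + w)(v - w)\bigr),
\end{equation*}
which yields
\begin{equation*}
  \|G[v] - G[w]\|_{L^{\infty}(\mathbb{T})} \leq \frac{n_{0}}{1 - D_{0}} \cdot 2\epsilon \cdot \|v - w\|_{L^{\infty}(\mathbb{T})}
\end{equation*}
for $v, w \in X_{\epsilon}$, giving $k_{0} = 2 n_{0} \epsilon / (1 - D_{0})$. Substituting the formula for $\epsilon$ yields
\begin{equation*}
  k_{0} = 1 - \frac{\sqrt{(1 - D_{0})^{2} - 4 \delta_{0} n_{0}}}{1 - D_{0}},
\end{equation*}
which is strictly less than $1$ precisely because the discriminant is strictly positive, i.e. because of the same hypothesis used in (1). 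There is no real obstacle beyond bookkeeping here; the only delicate point is checking that $\epsilon = r_{-}$ is the intended root (the larger root $r_{+}$ would give self-mapping but not contraction), and that $G$ does indeed preserve the evenness constraint defining $X_{\epsilon}$, which I would record explicitly from the parities of $u_{0}$, $w$, and the kernel of $\mathcal{H}$.
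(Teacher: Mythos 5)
Your proposal is correct and follows essentially the same route as the paper's proof: bound $(I-T)^{-1}$ by $(1-D_0)^{-1}$, reduce the self-mapping property to the quadratic inequality $n_0\epsilon^2 - (1-D_0)\epsilon + \delta_0 \leq 0$ saturated by the choice $\epsilon = r_-$, and estimate the Lipschitz constant via $v^2 - w^2 = (v+w)(v-w)$, obtaining $k_0 = 2n_0\epsilon/(1-D_0) < 1$. The only cosmetic differences are that you spell out the Neumann series and the explicit value of $k_0$, whereas the paper simply notes $\epsilon < (1-D_0)/(2n_0)$; the substance is identical.
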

\begin{proof}
  Using that \(N\) and \(F\) are even it can be checked that
  \begin{equation*}
    G(X_{\epsilon}) \subseteq (I - T)^{-1}X_{\delta_{0} + n_{0}\epsilon^{2}}.
  \end{equation*}
  Since \(\|T\| < 1\) the operator \(I - T\) is invertible and an
  upper bound of the norm of the inverse is given by
  \(\frac{1}{1 - D_{0}}\), moreover \(T\) takes even functions to even
  functions and hence so will \((I - T)^{-1}\). This gives us
  \begin{equation*}
    G(X_{\epsilon}) \subseteq (I - T)^{-1}X_{\delta_{0} + n_{0}\epsilon^{2}} \subseteq X_{\frac{\delta_{0} + n_{0}\epsilon^{2}}{1 - D_{0}}}.
  \end{equation*}
  The choice of \(\epsilon\) then gives
  \begin{equation*}
    \frac{\delta_{0} + n_{0}\epsilon^{2}}{1 - D_{0}} = \epsilon.
  \end{equation*}

  Next we have \(G[v] - G[w] = (I - T)^{-1}(-N(v^{2} - w^{2}))\) and hence
  \begin{equation*}
    \|G[v] - G[w]\|_{L^{\infty}(\mathbb{T})}
    \leq \frac{n_{0}}{1 - D_{0}}\|v^{2} - w^{2}\|_{L^{\infty}(\mathbb{T})}
    \leq \frac{2n_{0}\epsilon}{1 - D_{0}}\|v - w\|_{L^{\infty}(\mathbb{T})}.
  \end{equation*}
  Where \(k_{0} = \frac{2n_{0}\epsilon}{1 - D_{0}} < 1\) since
  \(\epsilon < \frac{1 - D_{0}}{2n_{0}}\).
\end{proof}

\section{Clausen functions}
\label{sec:clausen-functions}
We here give definitions and properties of the Clausen functions that
are used in Sections \ref{sec:construction},
\ref{sec:bounding-n-delta} and \ref{sec:analysis-T}. For more details
about the Clausen functions see Appendix \ref{sec:comp-encl-claus}.

The Clausen functions are related to the polylogarithm through
\begin{align*}
  C_{s}(x) = \frac{1}{2}\left(\polylog_{s}(e^{ix}) + \polylog_{s}(e^{-ix})\right) = \real\left(\polylog_{s}(e^{ix})\right),\\
  S_{s}(x) = \frac{1}{2}\left(\polylog_{s}(e^{ix}) - \polylog_{s}(e^{-ix})\right) = \imag\left(\polylog_{s}(e^{ix})\right).
\end{align*}
They behave nicely with respect to the Hilbert transform, for which we
have
\begin{equation*}
  \HT[C_{s}](x) = S_{s}(x),\quad \HT[S_{s}](x) = -C_{s}(x).
\end{equation*}
In many cases we want to work with functions which are normalised to
be zero at \(x = 0\), for which we use the notation
\begin{equation*}
  \tilde{C}_{s}(x) = C_{s}(x) - C_{s}(0),\quad
  \tilde{C}_{s}^{(\beta)}(x) = C_{s}^{(\beta)}(x) - C_{s}^{(\beta)}(0).
\end{equation*}
With this notation we get for the operator \(\HopGeneric\),
\begin{equation*}
  \HopGeneric[\tilde{C}_{s}](x) = -\tilde{C}_{s - 1}(x),\quad \HopGeneric[S_{s}](x) = -S_{s - 1}(x).
\end{equation*}

From \cite{enciso2018convexity} we have the following expansion for
\(C_{s}(x)\) and \(S_{s}(x)\), valid for \(|x| < 2\pi\),
\begin{align*}
  C_{s}(x) &= \Gamma(1 - s)\sin\left(\frac{\pi}{2}s\right)|x|^{s - 1}
             + \sum_{m = 0}^{\infty} (-1)^{m}\zeta(s - 2m)\frac{x^{2m}}{(2m)!};\\
  S_{s}(x) &= \Gamma(1 - s)\cos\left(\frac{\pi}{2}s\right)\sign(x)|x|^{s - 1}
             + \sum_{m = 0}^{\infty} (-1)^{m}\zeta(s - 2m - 1)\frac{x^{2m + 1}}{(2m + 1)!}.
\end{align*}
For the functions \(C_{s}^{(\beta)}\) and \(S_{s}^{(\beta)}\) we will
mainly make use of \(C_{2}^{(1)}(x)\) and \(C_{3}^{(1)}(x)\), for
which we have the following expansions \cite[Eq.~16]{Bailey2015},
valid for \(|x| < 2\pi\),
\begin{align*}
  C_{2}^{(1)}(x) =& \zeta^{(1)}(2) - \frac{\pi}{2}|x|\log|x| - (\gamma - 1)\frac{\pi}{2}|x|
                   + \sum_{m = 1}^{\infty}(-1)^{m}\zeta^{(1)}(2 - 2m)\frac{x^{2m}}{(2m)!};\\
  C_{3}^{(1)}(x) =& \zeta^{(1)}(3) - \frac{1}{4}x^{2}\log^2|x|
                    + \frac{3 - 2\gamma}{4}x^2\log|x|
                    - \frac{36\gamma - 12\gamma^2 - 24\gamma_{1} - 42 + \pi^{2}}{48}x^2\\
                 &+ \sum_{m = 2}^{\infty}(-1)^{m}\zeta^{(1)}(3 - 2m)\frac{x^{2m}}{(2m)!}.
\end{align*}
Where \(\gamma_{n}\) is the Stieltjes constant and
\(\gamma = \gamma_{0}\). Bounds for the tails are given in Lemmas
\ref{lemma:clausen-tails} and \ref{lemma:clausen-derivative-tails}.

\section{Construction of the approximate solution}
\label{sec:construction}

In this section we give the construction of the approximate solution
\(u_{0}\). As a first step we determine the coefficient for the
leading term in the asymptotic expansion.

\begin{lemma}
  \label{lemma:asymptotic-coefficient}
  Let \(u\) be a solution of \eqref{eq:main} with the asymptotic
  behaviour
  \begin{equation*}
    u(x) = \nu|x|\log|x| + o(|x|\log|x|)
  \end{equation*}
  close to zero, with \(\nu \not= 0\). Then the coefficient is given
  by \(\nu = -\frac{1}{\pi}\).
\end{lemma}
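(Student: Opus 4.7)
The natural plan is to substitute the asymptotic for $u$ into both sides of $\frac{1}{2}u^{2} = -\mathcal{H}[u]$ and match the coefficient of the leading $x^{2}\log^{2}|x|$ term. The left-hand side is immediate: squaring $u(x) = \nu|x|\log|x| + o(|x|\log|x|)$ gives $\frac{1}{2}u(x)^{2} = \frac{\nu^{2}}{2}x^{2}\log^{2}|x| + o(x^{2}\log^{2}|x|)$. For the right-hand side I would exploit the Clausen machinery of Section~\ref{sec:clausen-functions}.

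Since $\tilde{C}_{2}^{(1)}(x) = -\frac{\pi}{2}|x|\log|x| + O(|x|)$ near $0$, the hypothesis lets us write
\begin{equation*}
  u(x) = -\frac{2\nu}{\pi}\tilde{C}_{2}^{(1)}(x) + r(x), \qquad r(x) = o(|x|\log|x|).
\end{equation*}
Applying $\mathcal{H}$ and using the identity relating $\mathcal{H}$ to Clausen functions (which extends to the $s$-derivatives because $\mathcal{H}$ commutes with $\partial_{s}$), together with the leading expansion of $\tilde{C}_{3}^{(1)}$ near $0$ recalled in Section~\ref{sec:clausen-functions}, produces
\begin{equation*}
  \mathcal{H}[u](x) = \frac{\nu}{2\pi}x^{2}\log^{2}|x| + O(x^{2}\log|x|) + \mathcal{H}[r](x).
\end{equation*}
Granting for the moment that $\mathcal{H}[r] = o(x^{2}\log^{2}|x|)$, matching the $x^{2}\log^{2}|x|$ coefficients in $\frac{1}{2}u^{2} = -\mathcal{H}[u]$ reduces to $\frac{\nu^{2}}{2} = -\frac{\nu}{2\pi}$, whose unique nonzero solution is $\nu = -\frac{1}{\pi}$.

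The delicate step, and the main obstacle, is controlling $\mathcal{H}[r]$. Since $\mathcal{H}$ is non-local, one cannot read the order of vanishing of $\mathcal{H}[r]$ at $0$ directly off that of $r$. I would first note that $u$ (and hence $r$) must be even: the odd part of $u$ lies in the kernel of $\mathcal{H}$ restricted to mean-zero functions, which is trivial, and so it vanishes. Then I would split the defining integral of $\mathcal{H}[r](x)$ at $|y| \sim |x|$. For the local part, the change of variables $y = xt$ combined with the bound $|r(y)| \leq \varepsilon \left| y\log|y| \right|$ valid for $|y|$ small gives a contribution of order $\varepsilon x^{2}\log|x|$, which is $o(x^{2}\log^{2}|x|)$. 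For the non-local part, Taylor-expanding the kernel in $x$ at $0$, the linear-in-$x$ coefficient is odd in $y$ and integrates against the even $r$ to zero, while the quadratic coefficient reduces to $-\frac{x^{2}}{8\pi}\int_{|y|>2|x|} r(y)/\sin^{2}(y/2)\,dy$; the same bound on $r$ controls this integral by $O(\varepsilon \log^{2}|x|)$, giving a non-local contribution of order $\varepsilon x^{2}\log^{2}|x|$. Since $\varepsilon$ is arbitrary, both pieces are $o(x^{2}\log^{2}|x|)$, closing the argument.
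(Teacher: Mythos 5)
Your argument follows the paper's approach at its core: compare $u$ with $\tilde{C}_2^{(1)}$, use $\HopGeneric[\tilde{C}_2^{(1)}] = -\tilde{C}_3^{(1)}$ and the expansion of $\tilde{C}_3^{(1)}$ near $0$, and match the $x^2\log^2|x|$ coefficients in $\frac{1}{2}u^2 = -\HopGeneric[u]$ to arrive at $\frac{\nu^2}{2} = -\frac{\nu}{2\pi}$. The paper stops at this formal matching; you make the worthwhile further effort of isolating $r = u + \frac{2\nu}{\pi}\tilde{C}_2^{(1)} = o(|x|\log|x|)$ and proving $\HopGeneric[r](x) = o(x^2\log^2|x|)$ via a local/non-local split at $|y| \sim |x|$ together with a Taylor expansion of the kernel. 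The estimates you sketch for both pieces are sound in spirit.

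The step that does not hold up is your derivation of evenness. Taking the odd part of $\frac{1}{2}u^2 = -\HopGeneric[u]$ gives $u_e u_o = -\HopGeneric[u_o]$, which is a homogeneous linear equation coupling $u_o$ to $u_e$, not $\HopGeneric[u_o]=0$; so injectivity of $\HopGeneric$ on mean-zero odd functions does not by itself force $u_o = 0$. You need evenness precisely to kill the linear coefficient of the kernel's $x$-expansion (the $\cot(y/2)$ term), whose integral against $r$ would otherwise contribute $O(x)$ to $\HopGeneric[r]$ --- and $|x|$ dominates $x^2\log^2|x|$, so this really is the dangerous piece. Two clean fixes: take evenness of $u$ as a hypothesis, which is implicit in the paper's set-up where $u = c - \varphi$ for an even profile $\varphi$; or observe that any $O(x)$ contribution to $\HopGeneric[u]$ is ruled out by the equation itself, since the left-hand side satisfies $\frac{1}{2}u^2 = O(x^2\log^2|x|) = o(|x|)$, forcing that coefficient to vanish.
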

\begin{proof}
  For the asymptotic behaviour of the left hand side in
  \eqref{eq:main} we directly get
  \begin{equation*}
    \frac{1}{2}u(x)^{2} = \frac{\nu^{2}}{2}|x|^{2}\log^{2}|x| + o(|x|^{2}\log^{2}|x|)
  \end{equation*}
  To get the asymptotic behaviour of \(\HopGeneric[u]\) in the right
  hand side we go through the Clausen function
  \(\tilde{C}_{2}^{(1)}(x)\), which has the asymptotic behaviour
  \begin{equation*}
    \tilde{C}^{(1)}_{2}(x) = -\frac{\pi}{2}|x|\log|x| + o(|x|\log|x|).
  \end{equation*}
  Since \(\HopGeneric[\tilde{C}^{(1)}_{2}] = -\tilde{C}^{(1)}_{3}\) we
  have
  \begin{equation*}
    \HopGeneric{\tilde{C}_{2}^{(1)}(x)} = -\frac{1}{4}|x|^{2}\log^{2}|x| + o(|x|^{2}\log^{2}|x|),
  \end{equation*}
  giving us
  \begin{equation*}
    \HopGeneric[u] = \frac{1}{2\pi^{2}}|x|^{2}\log^{2}|x| + o(|x|^{2}\log^{2} |x|).
  \end{equation*}
  For \(\frac{1}{2}u^{2}\) and \(-\HopGeneric[u]\) to have the same
  asymptotic behaviour we must therefore have
  \begin{equation*}
    \frac{\nu^{2}}{2} = -\nu\frac{1}{2\pi},
  \end{equation*}
  which gives us the result.
\end{proof}

In addition to having the correct asymptotic behaviour we want
\(u_{0}\) to be a good approximate solution of \eqref{eq:main}, in the
sense that we want the defect,
\begin{equation*}
  F(x) = \frac{1}{w(x)u_{0}(x)}\left(\HopGeneric[u_{0}](x) + \frac{1}{2}u_{0}(x)^{2}\right),
\end{equation*}
to be small for \(x \in \mathbb{T}\). The hardest part is to make
\(F(x)\) small locally near the singularity at \(x = 0\), this is done
by studying the asymptotic behaviour of
\(\HopGeneric[u_{0}](x) + \frac{1}{2}u_{0}(x)^{2}\). Ones the defect
is sufficiently small near \(x = 0\) it can be made small globally by
adding a suitable trigonometric polynomial.

The construction is similar to that in \cite{enciso2018convexity}, the
main difference is that the asymptotic behaviour is more complicated
in our case. We take \(u_{0}\) to be a combination of three parts:
\begin{enumerate}
\item The first part is the term
  \(-\frac{2}{\pi^{2}}\tilde{C}^{(1)}_{2}\), where the coefficient is
  chosen to give the right asymptotic behaviour according to Lemma
  \ref{lemma:asymptotic-coefficient}.
\item The second part is chosen to make the defect small near
  \(x = 0\) and, similarly to in \cite{enciso2018convexity}, it is
  given by a sum of Clausen functions.
\item The third part is chosen to make the defect small globally and
  is given by a trigonometric polynomial.
\end{enumerate}
More precisely the approximation is given by
\begin{equation}
  \label{eq:u0}
  u_{0}(x) = \frac{2}{\pi^{2}}\tilde{C}_{2}^{(1)}(x)
  + \sum_{j = 1}^{N_{0}} a_{j}\tilde{C}_{s_{j}}(x)
  + \sum_{n = 1}^{N_{1}} b_{n}(\cos(nx) - 1),
\end{equation}
To ensure that the leading asymptotics are determined by
\(\tilde{C}_{2}^{(1)}(x)\) we require that \(s_{j} \geq 2\).

We want to chose the values of \(a_{j}\) and \(s_{j}\) to make the
defect small near \(x = 0\). Taking
\(u_{0} = \frac{2}{\pi^{2}}\tilde{C}_{2}^{(1)}\) gives us that the
leading term in the expansion of
\(\HopGeneric[u_{0}] + \frac{1}{2}u_{0}^{2}\) is of order
\(|x|^{2}\log|x|\). A natural choice would then be to take the next
term as a multiple of \(\tilde{C}_{2}\), for which
\(\HopGeneric[\tilde{C}_{2}]\) behaves like \(|x|^{2}\log|x|\).
However, its contribution to the \(|x|^{2}\log|x|\) term of
\(\HopGeneric[u_{0}]\) and \(\frac{1}{2}u_{0}^{2}\) turns out to
exactly cancel out and we are left with no improvement to the
asymptotic behaviour.

Instead we take inspiration from the fractional KdV equations
\eqref{eq:fkdv-wave}, which for \(\alpha = -1\) reduces to the
Burgers-Hilbert equation. To chose \(a_{j}\) and \(s_{j}\) we study
the limit \(\alpha \to -1^{+}\). For \(-1 < \alpha < 0\) the
fractional KdV equations, like the Burgers-Hilbert equation, admits a
highest cusped traveling wave solution, as recently proved in
\cite{orke22:_highes_kortew}. In a forthcoming
\cite{private-dahne2022fkdv} work we show that the traveling waves
asymptotically at \(x = 0\) behave like
\(c - \nu_{\alpha}|x|^{-\alpha}\), with
\begin{equation*}
  \nu_{\alpha} = \frac{2\Gamma(2\alpha)\cos(\pi\alpha)}{\Gamma(\alpha)\cos\left(\frac{\pi}{2}\alpha\right)}.
\end{equation*}
Following the same approach as in Section \ref{sec:reduct-fixed-point}
proving the existence of a highest cusped wave for the fractional KdV
equations can be reduced to studying the equation
\begin{equation*}
  \frac{1}{2}u^{2} = -\Hop{\alpha}[u],
\end{equation*}
where
\(\Hop{\alpha}[u](x) = |D|^{\alpha}[u](x) - |D|^{\alpha}[u](0)\). By
studying the asymptotic behaviour of
\(\Hop{\alpha}[u] + \frac{1}{2}u^{2}\) and following the same
reasoning as in \cite{enciso2018convexity} a suitable approximation
for this equation is given by
\begin{equation*}
  a_{\alpha,0}\tilde{C}_{1 - \alpha}(x) + \sum_{j = 1}^{N_{0}}a_{\alpha,j}\tilde{C}_{1 - \alpha + jp_{\alpha}}(x)
\end{equation*}
with
\begin{equation*}
  a_{\alpha,0} = \frac{2\Gamma(2\alpha)\cos\left(\pi\alpha\right)}{\Gamma(\alpha)^{2}\cos\left(\frac{\pi}{2}\alpha\right)^{2}}
\end{equation*}
and \(p_{\alpha}\) a solution of
\begin{equation*}
  \Gamma(\alpha)\cos\left(\frac{\pi}{2}\alpha\right)a_{\alpha,0}
  \Gamma(\alpha - p_{\alpha})\cos\left(\frac{\pi}{2}(\alpha - p_{\alpha})\right)
  - \Gamma(2\alpha - p_{\alpha})\cos\left(\frac{\pi}{2}(2\alpha - p_{\alpha})\right) = 0.
\end{equation*}
One can numerically solve for \(a_{\alpha,j}\) by considering the
asymptotic expansion of the defect. As \(\alpha \to -1\) we have
\(a_{\alpha,0} \to -\infty\) and \(p_{\alpha} \to 0\). Numerically one
also sees that \(a_{\alpha,1} \to \infty\), in such a way that
\(a_{\alpha,0} + a_{\alpha,1}\) remains bounded. The other
coefficients, \(a_{\alpha,j}\) for \(j \geq 2\), all remain bounded.
Furthermore we note that as \(\alpha\) approaches \(-1\), the function
\(a_{\alpha,0}(\tilde{C}_{1 - \alpha}(x) - \tilde{C}_{1 - \alpha +
  p_{\alpha}})\) approaches
\(\frac{2}{\pi^{2}}\tilde{C}_{2}^{(1)}(x)\). The remaining part,
\begin{equation*}
  (a_{\alpha,0} + a_{\alpha,1})\tilde{C}_{1 - \alpha + p_{\alpha}}(x) + \sum_{j = 2}^{N_{0}}a_{\alpha,j}\tilde{C}_{1 - \alpha + jp_{\alpha}}(x),
\end{equation*}
is numerically seen to converge to a function as \(\alpha \to -1\), as
long as \(N_{0}\) is increased as we approach \(\alpha = -1\). This
gives a hint on how to choose \(a_{j}\) and \(s_{j}\) for \(u_{0}\),
we take it according to
\begin{equation*}
  (a_{\alpha,0} + a_{\alpha,1})\tilde{C}_{1 - \alpha + p_{\alpha}}(x) + \sum_{j = 2}^{N_{0}}a_{\alpha,j}\tilde{C}_{1 - \alpha + jp_{\alpha}}(x).
\end{equation*}
For this we fix some \(\alpha\) close to \(-1\) and compute
\(p_{\alpha}\) as well as the coefficients \(a_{\alpha,j}\). We then
take \(a_{1} = a_{\alpha,0} + a_{\alpha,1}\), \(a_{j} = a_{\alpha,j}\)
for \(j \geq 2\) and
\begin{equation*}
  s_{j} = 1 - \alpha + jp_{\alpha}.
\end{equation*}
As long as \(\alpha\) is sufficiently close to \(-1\) and \(N_{0}\) is
sufficiently high this gives a small enough defect close to \(x = 0\).
While not obvious from the expression of \(p_{\alpha}\) we do have
\(1 - \alpha + p_{\alpha} > 2\), as required.

The coefficients \(b_{n}\) are then taken to make the defect small
globally. This is done by taking \(N_{1}\) equally spaced points
\(\{x_{n}\}_{1 \leq n \leq N_{1}}\)on the interval \((0, \pi)\) and
numerically solving the non-linear system
\begin{equation*}
  \HopGeneric[u_{0}](x_{n}) + \frac{1}{2}u_{0}(x_{n})^{2} = 0\quad
  \text{for}\quad 1 \leq n \leq N_{1}.
\end{equation*}

In the computations we use \(\alpha = -0.9997\) (giving
\(p_{\alpha} \approx 3.00045 \cdot 10^{-4}\)), \(N_{0} = 1929\) and
\(N_{1} = 16\).
\begin{remark}
  The approximation \(u_{0}\) only needs to be an approximate solution
  to the equation, as opposed to the coefficient for
  \(\tilde{C}_{2}^{(1)}\), which has to match exactly to obtain the
  right asymptotic behaviour.
\end{remark}

With this approximation we get
\begin{equation}
  \label{eq:Hu0}
  \HopGeneric[u_{0}](x) = -\frac{2}{\pi^{2}}\tilde{C}_{3}^{(1)}(x)
  - \sum_{j = 1}^{N_{0}} a_{j}\tilde{C}_{1 + s_{j}}(x)
  - \sum_{n = 1}^{N_{1}} b_{n}\frac{(\cos(nx) - 1)}{n}.
\end{equation}
Furthermore we have the following asymptotic expansions for the
approximation, which we give without proof since they follow directly
from the expansions of the Clausen and trigonometric functions.
\begin{lemma}
  \label{lemma:asymptotic-u0}
  The approximation \(u_{0}\) given by equation \eqref{eq:u0} has the
  asymptotic expansions
  \begin{align*}
    u_{0}(x) =& -\frac{1}{\pi}|x|\log|x| - \frac{\gamma - 1}{\pi}|x|
              + \sum_{j = 1}^{N_{0}}a_{j}^{0}|x|^{-\alpha + jp_{\alpha}}\\
              &+ \sum_{m = 1}^{\infty}\frac{(-1)^{m}}{(2m)!}\left(
                \frac{2}{\pi^{2}}\zeta^{(1)}(2 - 2m)
                + \sum_{j = 1}^{N_{0}}a_{j}\zeta(1 - \alpha + jp_{\alpha} - 2m)
                + \sum_{n = 1}^{N_{1}}b_{n}n^{2m}
                \right)x^{2m}
  \end{align*}
  and
  \begin{align*}
    \HopGeneric[u_{0}(x)] =& -\frac{1}{2\pi^{2}}x^{2}\log^{2}|x| + \frac{3 - 2\gamma}{2\pi^{2}}x^{2}\log|x|
                             -\sum_{j = 1}^{N_{0}}A_{j}^{0}|x|^{1 - \alpha + jp_{\alpha}}\\
                           &+\frac{1}{2}\left(
                             \frac{36\gamma - 12\gamma^2 - 24\gamma_{1} - 42 + \pi^{2}}{12\pi^{2}}
                             + \sum_{j = 1}^{N_{0}}a_{j}\zeta(-\alpha + jp_{\alpha})
                             + \sum_{n = 1}^{N_{1}}b_{n}n
                             \right)x^{2}\\
                           &-\sum_{m = 2}^{\infty}\frac{(-1)^{m}}{(2m)!}\left(
                             \frac{2}{\pi^{2}}\zeta^{(1)}(3 - 2m)
                             + \sum_{j = 1}^{N_{0}}a_{j}\zeta(2 - \alpha + jp_{\alpha} - 2m)
                             + \sum_{n = 1}^{N_{1}}b_{n}n^{2m - 1}
                             \right)x^{2m}
  \end{align*}
  valid for \(|x| < 2\pi\), where
  \begin{align*}
    a_{j}^{0} &= \Gamma(\alpha - jp_{\alpha})\cos\left((\alpha - jp_{\alpha})\frac{\pi}{2}\right)a_{j};\\
    A_{j}^{0} &= \Gamma(\alpha - 1 - jp_{\alpha})\cos\left((\alpha - 1 - jp_{\alpha})\frac{\pi}{2}\right)a_{j}.
  \end{align*}
\end{lemma}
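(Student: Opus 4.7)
The plan is to proceed by direct substitution: I plug the explicit Clausen and cosine expansions collected in Section \ref{sec:clausen-functions} into the representations \eqref{eq:u0} and \eqref{eq:Hu0}, and then collect terms of the same type. All three constituent series converge absolutely on $|x|<2\pi$, so term-by-term rearrangement is legitimate throughout that disk and there are no convergence issues to worry about.

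For the expansion of $u_0$, I would treat the three summands of \eqref{eq:u0} separately. The term $\frac{2}{\pi^{2}}\tilde{C}_2^{(1)}$ is handled via the $C_2^{(1)}$ expansion recalled in Section \ref{sec:clausen-functions}: subtracting $C_2^{(1)}(0)=\zeta^{(1)}(2)$ and multiplying by $\frac{2}{\pi^{2}}$ produces the non-analytic $-\frac{1}{\pi}|x|\log|x|$ and $-\frac{\gamma-1}{\pi}|x|$ pieces, together with the $\frac{2}{\pi^{2}}\zeta^{(1)}(2-2m)$ contribution inside each $x^{2m}$ coefficient. Each $a_j\tilde{C}_{s_j}$ is expanded via the generic Clausen formula; its singular part $a_j\Gamma(1-s_j)\sin(\pi s_j/2)|x|^{s_j-1}$ is rewritten, via $s_j = 1-\alpha+jp_\alpha$, $\Gamma(1-s_j)=\Gamma(\alpha-jp_\alpha)$ and $\sin(\pi s_j/2)=\cos(\pi(\alpha-jp_\alpha)/2)$, as precisely $a_j^{0}|x|^{-\alpha+jp_\alpha}$. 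Finally $\cos(nx)-1=\sum_{m\geq 1}(-1)^m n^{2m}\frac{x^{2m}}{(2m)!}$ contributes a $b_n n^{2m}$ term to the $x^{2m}$ coefficient. Summing the three streams yields the stated expansion.

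For $\mathcal{H}[u_0]$ the argument is essentially identical, starting from \eqref{eq:Hu0}. The $-\frac{2}{\pi^{2}}\tilde{C}_3^{(1)}$ term, via the $C_3^{(1)}$ expansion, gives all the $x^2\log^2|x|$, $x^2\log|x|$ and $x^2$ contributions as well as the $\frac{2}{\pi^{2}}\zeta^{(1)}(3-2m)$ pieces of the $x^{2m}$ series for $m\geq 2$. Each $-a_j\tilde{C}_{1+s_j}$ contributes a singular piece $-a_j\Gamma(-s_j)\sin(\pi(1+s_j)/2)|x|^{s_j}$; using $\Gamma(-s_j)=\Gamma(\alpha-1-jp_\alpha)$ together with $\sin(\pi(1+s_j)/2)=\cos(\pi(\alpha-1-jp_\alpha)/2)$, which follows from $\pi(1+s_j)/2 = \pi - \pi(\alpha-jp_\alpha)/2$ and the identity $\sin(\pi-y)=\cos(y-\pi/2)$, this becomes exactly $-A_j^{0}|x|^{1-\alpha+jp_\alpha}$. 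The analytic tails of $\tilde{C}_{1+s_j}$ and the expansion of $-\frac{1}{n}(\cos(nx)-1)$ contribute the $a_j\zeta(2-\alpha+jp_\alpha-2m)$ and $b_n n^{2m-1}$ summands inside the $x^{2m}$ coefficient for $m\geq 2$; the $m=1$ contribution is the $x^2$ line displayed separately.

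There is no real obstacle, only trigonometric bookkeeping: the algebraic manipulations that turn $\Gamma(1-s_j)\sin(\pi s_j/2)$ and $\Gamma(-s_j)\sin(\pi(1+s_j)/2)$ into the $\cos$-form appearing in the definitions of $a_j^{0}$ and $A_j^{0}$. This is presumably why the authors omit the proof in the text.
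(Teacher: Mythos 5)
Your proposal is correct and follows the same route the paper intends (the authors explicitly omit the proof, saying it ``follows directly from the expansions of the Clausen and trigonometric functions''): one substitutes the asymptotic expansions of \(\tilde{C}_2^{(1)}\), \(\tilde{C}_3^{(1)}\), \(\tilde{C}_{s_j}\), \(\tilde{C}_{1+s_j}\), and \(\cos(nx)-1\) into \eqref{eq:u0} and \eqref{eq:Hu0}, collects coefficients, and simplifies the gamma--sine factors to the \(\cos\)-form via the phase shifts you record. One small caution: when you expand \(-\frac{2}{\pi^2}\tilde{C}_3^{(1)}\), note that the signs on the \(x^2\log^2|x|\) and \(x^2\log|x|\) terms of the \(C_3^{(1)}\) expansion as displayed in Section~\ref{sec:clausen-functions} appear to be transcribed with the opposite sign of what Lemma~\ref{lemma:asymptotic-u0} requires (and of what the cancellation in Lemma~\ref{lemma:asymptotic-delta} forces), so one must work with the corrected signs \(+\frac{1}{4}x^2\log^2|x| - \frac{3-2\gamma}{4}x^2\log|x|\) to recover the stated expansion for \(\HopGeneric[u_0]\).
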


\section{Bounding \(n_{0}\) and \(\delta_{0}\)}
\label{sec:bounding-n-delta}
Recall that
\begin{align*}
  n_{0} &:= \|N\|_{L^{\infty}(\mathbb{T})}
          = \sup_{x \in [0, \pi]} \left|N(x)\right|,\\
  \delta_{0} &:= \|F\|_{L^{\infty}(\mathbb{T})}
               = \sup_{x \in [0, \pi]} |F(x)|,
\end{align*}
where
\begin{equation*}
  N(x) = \frac{x\sqrt{\log\left(1 + 1 / x\right)}}{2u_{0}(x)}
  \quad \text{and} \quad
  F(x) = \frac{\HopGeneric[u_{0}](x) + \frac{1}{2}u_{0}(x)^{2}}{x\sqrt{\log(1 + 1 / x)}u_{0}(x)}.
\end{equation*}
For fixed \(x > 0\) which is not too small we can compute accurate
enclosures of \(N(x)\) as well as \(F(x)\) using interval arithmetic.
This allows us to compute
\begin{equation*}
  \sup_{x \in [\epsilon, \pi]} \left|N(x)\right|
  \quad \text{and} \quad
  \sup_{x \in [\epsilon, \pi]} \left|F(x)\right|
\end{equation*}
for some fixed \(\epsilon > 0\). As \(x \to 0\) the numerators and
denominators tend to zero in both \(N\) and \(F\) and we need to
handle the removable singularities. We start with the following lemma:
\begin{lemma}
  \label{lemma:asymptotic-inv-u0}
  The function \(\frac{-|x|\log|x|}{u_{0}(x)}\) is positive and
  bounded at \(x = 0\) and for \(|x| < 1\) it has the expansion
  \begin{align*}
    \frac{|x|\log|x|}{u_{0}(x)} =& -\Bigg(
                                   -\frac{1}{\pi} - \frac{\gamma - 1}{\pi}\frac{1}{\log|x|}
                                   + \sum_{j = 1}^{N_{0}}a_{j}^{0}\frac{|x|^{-1 - \alpha + jp_{\alpha}}}{\log|x|}\\
                                 &+ \sum_{m = 1}^{\infty}\frac{(-1)^{m}}{(2m)!}\left(
                                   \frac{2}{\pi^{2}}\zeta^{(1)}(2 - 2m)
                                   + \sum_{j = 1}^{N_{0}}a_{j}\zeta(1 - \alpha + jp_{\alpha} - 2m)
                                   + \sum_{n = 1}^{N_{1}}b_{n}n^{2m}
                                   \right)\frac{|x|^{2m - 1}}{\log|x|}
                                   \Bigg)^{-1}.
  \end{align*}
\end{lemma}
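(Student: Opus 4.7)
My plan is to derive this identity as a direct algebraic consequence of the asymptotic expansion of \(u_0\) already established in Lemma~\ref{lemma:asymptotic-u0}. Starting from that expansion, which is valid for \(|x| < 2\pi\), I would factor the leading prefactor \(-|x|\log|x|\) out of every term on the right-hand side, writing \(u_0(x) = -|x|\log|x| \cdot R(x)\), where \(R(x)\) is the bracketed series appearing in the statement (up to how the outer sign is distributed). Concretely, the leading term \(-\frac{1}{\pi}|x|\log|x|\) contributes the constant \(\frac{1}{\pi}\); the linear term \(-\frac{\gamma-1}{\pi}|x|\) becomes a \(\frac{1}{\log|x|}\) term; each Clausen contribution \(a_j^0|x|^{-\alpha+jp_\alpha}\) produces \(\frac{|x|^{-1-\alpha+jp_\alpha}}{\log|x|}\); and each analytic coefficient attached to \(x^{2m}\) contributes \(\frac{|x|^{2m-1}}{\log|x|}\), using that \(x^{2m}=|x|^{2m}\). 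Taking reciprocals and multiplying through by \(|x|\log|x|\) then yields the claimed identity.

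For the positivity and boundedness at \(x = 0\), I would observe that in the limit \(x \to 0\) every term of \(R(x)\) other than the leading \(\frac{1}{\pi}\) carries a factor of \(\frac{1}{\log|x|}\) or a positive power of \(\frac{|x|^{k}}{\log|x|}\) (recalling that \(-1-\alpha+p_\alpha > 0\) by the construction in Section~\ref{sec:construction}), and each of these tends to zero as \(x \to 0\). Consequently \(R(x) \to \frac{1}{\pi} \neq 0\), which ensures that the reciprocal is well-defined in a neighborhood of \(x = 0\) and gives the limiting value \(\frac{-|x|\log|x|}{u_0(x)} \to \pi\), finite and strictly positive. For the full range \(|x| < 1\), the expansion is the same formal identity; one should only note that \(\log|x|\) is nonzero there (it vanishes at \(|x|=1\), which is why the statement is restricted to \(|x|<1\)), so the \(\frac{1}{\log|x|}\) factors make sense.

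The only real obstacle is bookkeeping: carefully tracking the signs when factoring out \(-|x|\log|x|\), and verifying that absolute convergence of the divided series on \(|x| < 1\) follows from the absolute convergence of the expansion in Lemma~\ref{lemma:asymptotic-u0} (which holds on the larger disk \(|x|<2\pi\)) together with the fact that \(\frac{1}{\log|x|}\) is bounded away from infinity on compact subsets of \((0,1)\). No new analytic input beyond Lemma~\ref{lemma:asymptotic-u0} is required.
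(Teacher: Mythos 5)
Your proposal is correct and takes essentially the same route as the paper: divide the expansion of \(u_{0}\) from Lemma~\ref{lemma:asymptotic-u0} by \(|x|\log|x|\), take the reciprocal, and note that every term except the leading constant vanishes as \(x \to 0\) (using \(\tfrac{1}{\log|x|} \to 0\) and \(-1-\alpha+jp_{\alpha} > 0\) by construction), giving the limit \(\pi\). Your hedge about the distribution of the outer sign is warranted, since the printed display is only consistent with the positivity claim if its left-hand side is read as \(\tfrac{-|x|\log|x|}{u_{0}(x)}\), matching the lemma's first sentence.
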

\begin{proof}
  The expansion follows directly from the expansion of \(u_{0}\) given
  in Lemma \ref{lemma:asymptotic-u0} and canceling the \(|x|\log|x|\)
  factor.

  The function \(\frac{1}{\log|x|}\) goes to zero at \(x = 0\) and is
  bounded nearby and by construction \(\alpha\) and \(p\) are taken
  such that \(-1 - \alpha + jp_{\alpha} > 0\) for \(j \geq 1\). This
  means that all terms except \(\frac{1}{\pi}\) tend to zero as
  \(x \to 0\). The value at \(x = 0\) will hence be given by
  \(-\left(-\frac{1}{\pi}\right)^{-1} = \pi\).
\end{proof}
In light of this lemma we do the split
\begin{equation*}
  N(x) = \frac{\sqrt{\log\left(1 + 1 / x\right)}}{-2\log x} \cdot \frac{-x\log x}{u_{0}(x)}.
\end{equation*}
The second factor can then be bounded using the lemma. For the first
factor it is enough to notice that it tends to zero at \(x = 0\) and
is increasing in \(x\) on \((0, 1)\). For \(F(x)\) we do the split
\begin{equation*}
  F(x) = -\frac{1}{\sqrt{\log(1 + 1 / x)}}
  \cdot \frac{-x\log x}{u_{0}(x)}
  \cdot \frac{\HopGeneric[u_{0}](x) + \frac{1}{2}u_{0}(x)^{2}}{x^{2}\log x}
\end{equation*}
The first factor can be handled by noticing that it tends to zero at
\(x = 0\) and is increasing in \(x\), the second factor is handled
using the above lemma. What remains is to handle the third factor,
which is done in the following lemma:
\begin{lemma}
  \label{lemma:asymptotic-delta}
  The function
  \begin{equation*}
    \frac{\HopGeneric[u_{0}](x) + \frac{1}{2}u_{0}(x)^{2}}{x^{2}\log x}
  \end{equation*}
  is non-zero and bounded at \(x = 0\) and for \(|x| < 1\) it has the
  expansion
  \begin{multline*}
    \frac{\HopGeneric[u_{0}](x) + \frac{1}{2}u_{0}(x)^{2}}{x^{2}\log x}
    =  \frac{1}{2\pi^{2}}\\
    + \frac{1}{2}\left(
      \frac{(\gamma - 1)^{2}}{\pi^{2}}
      + \frac{36\gamma - 12\gamma^2 - 24\gamma_{1} - 42 + \pi^{2}}{12\pi^{2}}
      + \sum_{j = 1}^{N_{0}}a_{j}\zeta(-\alpha + jp_{\alpha})
      + \sum_{n = 1}^{N_{1}}b_{n}n
    \right)\frac{1}{\log|x|}\\
    - \frac{1}{\pi}\left(1 + \frac{\gamma - 1}{\log|x|}\right)\sum_{j = 1}^{N_{0}}a_{j}^{0}|x|^{-1 - \alpha + jp_{\alpha}}
    + \frac{1}{2\log|x|}\left(\sum_{j = 1}^{N_{0}}a_{j}^{0}|x|^{-1 - \alpha + jp_{\alpha}}\right)^{2}
    - \frac{1}{\log|x|}\sum_{j = 1}^{N_{0}}A_{j}^{0}|x|^{-1 - \alpha + jp_{\alpha}}\\
    - \frac{1}{\pi}\left(1 + \frac{\gamma - 1}{\log|x|}\right)\frac{S_{1}}{|x|}
    + \frac{1}{\log|x|}\left(\sum_{j = 1}^{N_{0}}a_{j}^{0}|x|^{-1 - \alpha + jp_{\alpha}}\right)\frac{S_{1}}{|x|}
    + \frac{1}{2}\frac{S_{1}^{2}}{x^{2}\log|x|} - \frac{S_{2}}{x^{2}\log|x|}.
  \end{multline*}
  where
  \begin{align*}
    S_{1} &= \sum_{m = 1}^{\infty}\frac{(-1)^{m}}{(2m)!}\left(
            \frac{2}{\pi^{2}}\zeta^{(1)}(2 - 2m)
            + \sum_{j = 1}^{N_{0}}a_{j}\zeta(1 - \alpha + jp_{\alpha} - 2m)
            + \sum_{n = 1}^{N_{1}}b_{n}n^{2m}
            \right)|x|^{2m}\\
    S_{2} &= \sum_{m = 2}^{\infty}\frac{(-1)^{m}}{(2m)!}\left(
            \frac{2}{\pi^{2}}\zeta^{(1)}(3 - 2m)
            + \sum_{j = 1}^{N_{0}}a_{j}\zeta(2 - \alpha + jp_{\alpha} - 2m)
            + \sum_{n = 1}^{N_{1}}b_{n}n^{2m - 1}
            \right)x^{2m}
  \end{align*}
\end{lemma}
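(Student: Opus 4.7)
The plan is to combine the two expansions from Lemma \ref{lemma:asymptotic-u0} algebraically: expand $\frac{1}{2}u_0^2$ by squaring term by term, add $\HopGeneric[u_0]$, then divide by $x^2\log x$. The main content is a verification that the two leading singular contributions ($x^2\log^2|x|$ and $x^2\log|x|$) combine correctly to produce the constant $\frac{1}{2\pi^2}$ claimed, and that the resulting series matches the stated form.

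First I would introduce abbreviations for the four natural blocks in the expansion of $u_0$ from Lemma \ref{lemma:asymptotic-u0}: set
\[
P(x) = -\tfrac{1}{\pi}|x|\log|x|,\quad Q(x) = -\tfrac{\gamma-1}{\pi}|x|,\quad R(x) = \sum_{j=1}^{N_0} a_j^0 |x|^{-\alpha+jp_\alpha},\quad S_1(x) \text{ as in the statement},
\]
so that $u_0 = P+Q+R+S_1$. Squaring gives ten terms. The $P^2$ term yields $\frac{1}{\pi^2}x^2\log^2|x|$, whose half cancels the leading $-\frac{1}{2\pi^2}x^2\log^2|x|$ in the expansion of $\HopGeneric[u_0]$. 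The $2PQ$ cross-term gives $\frac{2(\gamma-1)}{\pi^2}x^2\log|x|$, whose half combines with $\frac{3-2\gamma}{2\pi^2}x^2\log|x|$ from $\HopGeneric[u_0]$ to give exactly $\frac{1}{2\pi^2}x^2\log|x|$, which upon division by $x^2\log x$ produces the claimed leading constant $\frac{1}{2\pi^2}$. The $Q^2$ and the $x^2$-coefficient from $\HopGeneric[u_0]$ (the quantity multiplying $\tfrac{1}{2}$ on the third line of Lemma \ref{lemma:asymptotic-u0}) combine into the $\frac{1}{\log|x|}$ coefficient in the stated expansion.

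Next I would collect the remaining cross-terms. The contributions $2PR$ and $2QR$ together with the $-\sum_j A_j^0 |x|^{1-\alpha+jp_\alpha}$ term from $\HopGeneric[u_0]$ produce, after division by $x^2\log x$, exactly the two lines
\[
-\tfrac{1}{\pi}\Bigl(1+\tfrac{\gamma-1}{\log|x|}\Bigr)\sum_{j=1}^{N_0} a_j^0 |x|^{-1-\alpha+jp_\alpha} - \tfrac{1}{\log|x|}\sum_{j=1}^{N_0} A_j^0 |x|^{-1-\alpha+jp_\alpha},
\]
and the pure $R^2$ term yields the $\frac{1}{2\log|x|}(\sum_j a_j^0|x|^{-1-\alpha+jp_\alpha})^2$ contribution. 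The three remaining cross-terms $2PS_1$, $2QS_1$, $2RS_1$, together with the half of $S_1^2$ and with the $-S_2$ contribution from $\HopGeneric[u_0]$, account verbatim for the last line in the statement after dividing through by $x^2\log x$. All of this is straightforward bookkeeping; the cancellation of the $x^2\log^2|x|$ terms and the coefficient simplification for $x^2\log|x|$ are the only substantive identities.

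Finally, for the claim that the function is bounded and non-zero at $x=0$, I would observe that by construction the exponents $-1-\alpha+jp_\alpha$ are strictly positive for $j\geq 1$ (this was noted below the definition of the $s_j$), so each term $|x|^{-1-\alpha+jp_\alpha}$ vanishes at the origin; similarly $S_1(x)=O(x^2)$ and $S_2(x)=O(x^4)$, so $S_1/|x|$, $S_1^2/x^2$ and $S_2/x^2$ all tend to zero, as does each $1/\log|x|$ factor. Hence every term beyond the first in the displayed expansion vanishes as $x\to 0$, leaving the limit $\frac{1}{2\pi^2}\neq 0$. The main obstacle is purely organizational, namely keeping track of which cross-terms pair with which piece of $\HopGeneric[u_0]$; there is no analytic difficulty beyond that of Lemma \ref{lemma:asymptotic-u0} itself.
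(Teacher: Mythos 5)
Your proposal is correct and follows essentially the same route as the paper's proof: square the expansion of $u_{0}$ from Lemma \ref{lemma:asymptotic-u0}, add the expansion of $\HopGeneric[u_{0}]$, observe that the $x^{2}\log^{2}|x|$ terms cancel while the $x^{2}\log|x|$ coefficients combine to $\frac{1}{2\pi^{2}}$, and divide by $x^{2}\log x$. The bookkeeping you carry out term by term is the same cancellation the paper highlights, and your argument for boundedness at $x=0$ (using the positivity of $-1-\alpha+jp_{\alpha}$ and the decay of $S_{1}$, $S_{2}$, and $1/\log|x|$) is also in agreement.
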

\begin{proof}
  From the expansions of \(u_{0}\) in Lemma \ref{lemma:asymptotic-u0}
  we get
  \begin{multline*}
    \frac{1}{2}u_{0}(x)^{2} = \frac{1}{2\pi^{2}}x^{2}\log^{2}|x|
    + \frac{\gamma - 1}{\pi^{2}}x^{2}\log |x|
    + \frac{(\gamma - 1)^{2}}{2\pi^{2}}x^{2}\\
    - \frac{1}{\pi}(\log|x| + \gamma - 1)\sum_{j = 1}^{N_{0}}a_{j}^{0}|x|^{1 - \alpha + jp_{\alpha}}
    + \frac{1}{2}\left(\sum_{j = 1}^{N_{0}}a_{j}^{0}|x|^{-\alpha + jp_{\alpha}}\right)^{2}\\
    - \frac{1}{\pi}(\log|x| + \gamma - 1)|x|S_{1}
    + \left(\sum_{j = 1}^{N_{0}}a_{j}^{0}|x|^{-\alpha + jp_{\alpha}}\right)S_{1}
    + \frac{1}{2}S_{1}^{2}.
  \end{multline*}
  This together with the expansion for \(\HopGeneric[u_{0}]\) gives us
  \begin{multline*}
    \HopGeneric[u_{0}](x) + \frac{1}{2}u_{0}(x)^{2}
    =  \frac{1}{2\pi^{2}}x^{2}\log |x|\\
    + \frac{1}{2}\left(
      \frac{(\gamma - 1)^{2}}{\pi^{2}}
      + \frac{36\gamma - 12\gamma^2 - 24\gamma_{1} - 42 + \pi^{2}}{12\pi^{2}}
      + \sum_{j = 1}^{N_{0}}a_{j}\zeta(-\alpha + jp_{\alpha})
      + \sum_{n = 1}^{N_{1}}b_{n}n
    \right)x^{2}\\
    - \frac{1}{\pi}(\log|x| + \gamma - 1)\sum_{j = 1}^{N_{0}}a_{j}^{0}|x|^{1 - \alpha + jp_{\alpha}}
    + \frac{1}{2}\left(\sum_{j = 1}^{N_{0}}a_{j}^{0}|x|^{-\alpha + jp_{\alpha}}\right)^{2}
    - \sum_{j = 1}^{N_{0}}A_{j}^{0}|x|^{1 - \alpha + jp_{\alpha}}\\
    - \frac{1}{\pi}(\log|x| + \gamma - 1)|x|S_{1}
    + \left(\sum_{j = 1}^{N_{0}}a_{j}^{0}|x|^{-\alpha + jp_{\alpha}}\right)S_{1}
    + \frac{1}{2}S_{1}^{2} - S_{2}.
  \end{multline*}
  Division by \(x^{2}\log|x|\) gives the required expansion. Note that
  the two leading terms in \(\frac{1}{2}u_{0}\) and
  \(\HopGeneric[u_{0}]\) exactly cancel out, this is needed for the
  result to be bounded near \(x = 0\).
\end{proof}

\section{Analysis of \(T\) and bounding \(D_{0}\)}
\label{sec:analysis-T}
In this section we give more details about the operator \(T\) defined
by
\begin{equation*}
  T[v] = -\frac{1}{wu_{0}}\HopGeneric[w v]
\end{equation*}
and show how to bound \(D_{0} := \|T\|\).

For an even function \(v(x)\) and \(0 < x < \pi\) we can write
\eqref{eq:H} as
\begin{equation}
  \label{eq:H-integral-representation}
  \HopGeneric[v](x) = \frac{1}{\pi}\int_{0}^{\pi}\log\left(\frac{\sin(|x - y| / 2)\sin((x + y) / 2)}{\sin(y / 2)^{2}}\right)v(y)\ dy.
\end{equation}
Using that \(C_{1}(x) = -\log(2\sin(|x| / 2))\) this can alternatively
be written as
\begin{equation}
  \label{eq:H-integral-representation-clausen}
  \HopGeneric[v](x) = \frac{1}{\pi}\int_{0}^{\pi}-\left(C_{1}(x - y) + C_{1}(x + y) - 2C_{1}(y)\right)v(y)\ dy.
\end{equation}
From \eqref{eq:H-integral-representation} we have
\begin{equation*}
  T[v](x) = \frac{1}{\pi w(x) u_{0}(x)}
  \int_{0}^{\pi}\log\left(\frac{\sin(|x - y| / 2)\sin((x + y) / 2)}{\sin(y / 2)^{2}}\right)w(y)v(y)\ dy.
\end{equation*}

The norm of \(T\) is then given by
\begin{equation}
  \label{eq:D}
  D_{0} = \|T\| =
  \sup_{x \in [0, \pi]} \left|\frac{1}{\pi w(x) u_{0}(x)}
    \int_{0}^{\pi}\left|\log\left(\frac{\sin(|x - y| / 2)\sin((x + y) / 2)}{\sin(y / 2)^{2}}\right)w(y)\right|\ dy
  \right|.
\end{equation}
Let
\begin{equation*}
  I(x, y) = \log\left(\frac{\sin(|x - y| / 2)\sin((x + y) / 2)}{\sin(y / 2)^{2}}\right),
\end{equation*}
and
\begin{equation*}
  U(x) = \int_{0}^{\pi}|I(x, y)w(y)|\ dy
  = \int_{0}^{\pi}|I(x, y)|y\sqrt{\log(1 + 1 / y)}\ dy.
\end{equation*}
We are then interested in computing
\begin{equation*}
  D_{0} = \|T\| = \sup_{x \in [0, \pi]} \left|\frac{U(x)}{\pi w(x) u_{0}(x)}\right|.
\end{equation*}
We use the notation
\begin{equation*}
  \mathcal{T}(x) = \frac{U(x)}{\pi w(x) u_{0}(x)}.
\end{equation*}

The integrand of \(U(x)\) has a singularity at \(y = x\). It is
therefore natural to split the integral into two parts
\begin{align*}
  U_{1}(x) &= \int_{0}^{x}|I(x, y)|y\sqrt{\log(1 + 1 / y)}\ dy = x^{2} \int_{0}^{1}|\hat{I}(x, t)|t\sqrt{\log(1 + 1 / (xt))}\ dt,\\
  U_{2}(x) &= \int_{x}^{\pi}|I(x, y)|y\sqrt{\log(1 + 1 / y)}\ dy,
\end{align*}
where
\begin{equation*}
  \hat{I}(x, t) = \log\left(\frac{\sin(|x(1 - t)| / 2)\sin(x(1 + t) / 2)}{\sin(xt / 2)^{2}}\right).
\end{equation*}
The following two lemmas give information about the sign of
\(\hat{I}(x, t)\) and \(I(x, y)\), allowing us to remove the absolute
value.
\begin{lemma}
  \label{lemma:I-0-x}
  For all \(x \in (0, \pi)\) the function \(\hat{I}(x, t)\) is
  decreasing and continuous in \(t\) for \(t \in (0, 1)\) and has the
  limits
  \begin{align*}
    \lim_{t \to 0^{+}} \hat{I}(x, t) &= \infty,\\
    \lim_{t \to 1^{-}} \hat{I}(x, t) &= -\infty.
  \end{align*}
  Moreover, the unique root, \(r_{x}\), is decreasing in \(x\) and
  satisfies the inequality
  \begin{equation*}
    \frac{1}{2} < r_{x} < \frac{1}{\sqrt{2}}.
  \end{equation*}
\end{lemma}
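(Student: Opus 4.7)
The plan is to first rewrite $\hat{I}(x,t)$ in a much cleaner form using product-to-sum identities. Setting $A = x(1-t)/2$ and $B = x(1+t)/2$, so that $A-B = -xt$ and $A+B = x$, we get $\sin(x(1-t)/2)\sin(x(1+t)/2) = \tfrac{1}{2}(\cos(xt) - \cos x)$, while $\sin^2(xt/2) = \tfrac{1}{2}(1 - \cos(xt))$. Hence
\begin{equation*}
  \hat{I}(x,t) = \log\frac{\cos(xt) - \cos x}{1 - \cos(xt)}.
\end{equation*}
Both monotonicity and the limits in $t$ drop out immediately from this formula: as $t$ increases on $(0,1)$ with $x \in (0,\pi)$ fixed, $\cos(xt)$ decreases (since $xt \in (0,\pi)$ and $\cos$ is decreasing there), so the numerator strictly decreases while the denominator strictly increases, making the ratio strictly decreasing. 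For the limits, as $t \to 0^+$ the denominator $\to 0^+$ with the numerator $\to 1 - \cos x > 0$, giving $+\infty$; as $t \to 1^-$ the numerator $\to 0^+$ with the denominator $\to 1 - \cos x > 0$, giving $-\infty$. Existence and uniqueness of the root $r_x$ follow from continuity and strict monotonicity.

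Next, solving $\hat{I}(x, r_x) = 0$ gives $2\cos(x r_x) = 1 + \cos x$, i.e.\ $\cos(xr_x) = \cos^2(x/2)$, so
\begin{equation*}
  r_x = \frac{\arccos(\cos^2(x/2))}{x}.
\end{equation*}
The inequality $r_x > 1/2$ is equivalent to $\arccos(\cos^2(x/2)) > x/2$, and since $\arccos$ is decreasing this becomes $\cos^2(x/2) < \cos(x/2)$, which holds because $\cos(x/2) \in (0,1)$ for $x \in (0,\pi)$. The inequality $r_x < 1/\sqrt{2}$ is equivalent to $\cos^2(x/2) > \cos(x/\sqrt{2})$, or $g(x) := \tfrac{1+\cos x}{2} - \cos(x/\sqrt{2}) > 0$ on $(0,\pi)$.

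The bound $r_x < 1/\sqrt{2}$ is the main obstacle, and I would handle it by a repeated-derivative argument: compute $g(0) = g'(0) = 0$ and $g''(x) = -\tfrac{\cos x}{2} + \tfrac{1}{2}\cos(x/\sqrt{2})$. Then writing $g'' = \tfrac{1}{2}(\cos(x/\sqrt 2) - \cos x)$, the monotonicity of $\cos$ on $(0,\pi)$ together with $x/\sqrt{2} < x$ gives $g''(x) > 0$ on $(0,\pi)$, so $g'$ is strictly increasing from $0$, hence $g' > 0$ and therefore $g > 0$ on $(0,\pi)$.

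Finally, monotonicity of $r_x$ in $x$ follows from writing $r_x = h(x)/x$ with $h(x) := \arccos(\cos^2(x/2))$ and showing $h$ is concave on $(0,\pi)$ with $h(0)=0$; the conclusion that $h(x)/x$ is then decreasing is a standard fact (equivalently, $q(x) := h(x) - x h'(x)$ satisfies $q(0) = 0$ and $q'(x) = -x h''(x) > 0$). Concavity of $h$ is clean: a direct computation using $\sin(x/2)\cos(x/2) = \tfrac{1}{2}\sin x$ and $1 - \cos^4(x/2) = \sin^2(x/2)(1 + \cos^2(x/2))$ gives
\begin{equation*}
  h'(x) = \frac{\cos(x/2)}{\sqrt{1 + \cos^2(x/2)}},
\end{equation*}
which, viewed as a composition of the increasing function $u \mapsto u/\sqrt{1+u^2}$ with the decreasing function $x \mapsto \cos(x/2)$ on $(0,\pi)$, is strictly decreasing, so $h'' < 0$ and the proof is complete.
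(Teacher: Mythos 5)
Your proof is correct and takes a genuinely different, and in several respects cleaner, route than the paper's. Where the paper works with the cotangent form of $\partial_t \hat{I}$ and establishes monotonicity of $\hat{I}$ in $x$ (via concavity of $t\cot(xt/2)$) in order to deduce the bounds on $r_x$ from the two boundary cases $x=\pi$ and $x\to 0^+$, you first collapse $\hat{I}$ via product-to-sum identities to $\log\frac{\cos(xt)-\cos x}{1-\cos(xt)}$, from which monotonicity in $t$ and both limits are immediate, and — the real payoff — you extract a closed-form root $r_x = \arccos(\cos^2(x/2))/x$ that the paper never writes down. The bounds then become explicit scalar inequalities: $r_x>\tfrac12$ reduces to $\cos^2(x/2)<\cos(x/2)$, which is trivial, and $r_x<\tfrac1{\sqrt2}$ reduces to $g(x)=\tfrac{1+\cos x}{2}-\cos(x/\sqrt2)>0$, which your two-derivative argument handles cleanly since $g(0)=g'(0)=0$ and $g''=\tfrac12\bigl(\cos(x/\sqrt2)-\cos x\bigr)>0$ follows from $0<x/\sqrt2<x<\pi$. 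For monotonicity of $r_x$, your computation $h'(x)=\cos(x/2)/\sqrt{1+\cos^2(x/2)}$ (after the cancellation $1-\cos^4(x/2)=\sin^2(x/2)\bigl(1+\cos^2(x/2)\bigr)$) is correct, the composition argument gives $h''<0$, and the standard fact that $h(x)/x$ is decreasing when $h$ is concave with $h(0)=0$ finishes it. The two approaches prove the same statement, but yours decouples the three claims (monotonicity in $t$, monotonicity of $r_x$, the bounds), whereas the paper uses monotonicity in $x$ as the lever for the bounds; your route also produces the useful explicit expression for $r_x$ as a byproduct.

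One minor point worth making explicit when writing this up: when you pass from $\arccos(\cos^2(x/2))<x/\sqrt{2}$ to $\cos^2(x/2)>\cos(x/\sqrt{2})$, you are implicitly using that $x/\sqrt{2}\in(0,\pi)$ for all $x\in(0,\pi)$ (true, since $\pi/\sqrt{2}<\pi$), so that $\arccos(\cos(x/\sqrt{2}))=x/\sqrt{2}$; similarly, $\arccos(\cos^2(x/2))\in(0,\pi)$ since $\cos^2(x/2)\in(0,1)$. Both checks are one-liners, but they should appear since arccos undoes cos only on $[0,\pi]$.
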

\begin{proof}
  The left and right limits are easily checked and it is clear that
  the function is continuous in \(t\) on the interval. To show that it
  is decreasing in \(t\) we split the \(\log\) into three terms and
  differentiate, giving us
  \begin{equation*}
    \frac{d}{dt}\hat{I}(x, t)
    = \frac{1}{2}x\left(-\cot(x(1 - t) / 2) + \cot(x(1 + t) / 2) - 2\cot(xt / 2)\right).
  \end{equation*}
  We want to prove that this is negative. Note that
  \(0 < xt / 2 < \pi / 2\) and that \(\cot\) is positive on the
  interval \((0, \pi / 2)\), hence it is enough to check
  \begin{equation*}
    -\cot(x(1 - t) / 2) + \cot(x(1 + t) / 2) < 0,
  \end{equation*}
  which follows immediately from the monotonicity of \(\cot\) on the
  interval \([0, \pi]\). This proves the existence of a unique root
  \(r_{x}\) on the interval \((0, 1)\).

  To prove that \(r_{x}\) is decreasing in \(x\) it is enough to prove
  that \(\hat{I}(x, t)\) is decreasing in \(x\). Differentiating with
  respect to \(x\) gives us
  \begin{equation*}
    \frac{d}{dx}\hat{I}(x, t)
    = \frac{1}{2}\left(\left((1 - t)\cot(x(1 - t) / 2) + (1 + t)\cot(x(1 + t) / 2) - 2t\cot(xt / 2)\right)\right),
  \end{equation*}
  which we want to prove is negative. Letting
  \(g(t) = t \cot(xt / 2)\) and using that \(g(1) \leq g(t)\) for
  \(t \in (0, 1)\) it is enough to show that
  \begin{equation*}
    g(1 - t) + g(1 + t) \leq 2g(1).
  \end{equation*}
  This holds since \(g\) is concave, as
  \begin{equation*}
    g''(t) = \frac{x}{2\sin^{2}(xt / 2)}(xt\cot(xt / 2) - 2) < 0
  \end{equation*}
  following from \(\frac{x}{2\sin^{2}(xt / 2)} > 0\) and
  \(xt\cot(xt / 2) \leq 2\).

  Finally, to see that \(\frac{1}{2} < r_{x} < \frac{1}{\sqrt{2}}\) it
  is enough to check that for \(x = \pi\) the root is given by
  \(t = \frac{1}{2}\) and that
  \begin{equation*}
    \lim_{x \to 0^{+}} \hat{I}(x, t) = \lim_{x \to 0^{+}} \log\left(\frac{x(1 - t) / 2 \cdot x(1 + t) / 2}{(xt / 2)^{2}}\right)
    = -\log\left(\frac{1}{t^{2}} - 1\right)
  \end{equation*}
  together with the monotonicity in \(x\).
\end{proof}
\begin{lemma}
  \label{lemma:I-x-pi}
  For \(x \in (0, \pi)\) we have \(I(x, y) < 0\) for all
  \(y \in (x, \pi)\).
\end{lemma}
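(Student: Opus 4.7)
The plan is to reduce the sign claim about $I(x,y)$ to a single trigonometric inequality that holds trivially for $x \in (0,\pi)$. Since $y > x$ in the regime under consideration, we have $|x - y| = y - x$, so it suffices to prove
\begin{equation*}
  \sin\!\left(\tfrac{y - x}{2}\right)\sin\!\left(\tfrac{x + y}{2}\right) < \sin\!\left(\tfrac{y}{2}\right)^{2}
\end{equation*}
for all $x \in (0,\pi)$ and $y \in (x,\pi)$, as this is exactly the statement that the argument of the logarithm defining $I(x,y)$ is strictly less than $1$.

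To verify this inequality, I will apply the product-to-sum identity $\sin A \sin B = \tfrac{1}{2}(\cos(A-B) - \cos(A+B))$ with $A = (y-x)/2$ and $B = (y+x)/2$, so that $A - B = -x$ and $A + B = y$. This rewrites the left-hand side as $\tfrac{1}{2}(\cos x - \cos y)$. On the right-hand side, the half-angle identity gives $\sin(y/2)^{2} = \tfrac{1}{2}(1 - \cos y)$. The desired inequality then collapses to
\begin{equation*}
  \cos x - \cos y < 1 - \cos y,
\end{equation*}
which is equivalent to $\cos x < 1$, and this is immediate for $x \in (0,\pi)$.

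There is no real obstacle here: the point $x = 0$, which would make the inequality an equality, is excluded by hypothesis, and the range of $y$ plays no role once the product-to-sum identity eliminates $y$ from the comparison. The only bookkeeping item is the absolute value in $\sin(|x - y|/2)$, which is handled by the observation that $y > x$ in the stated range. Thus the lemma follows in a couple of lines.
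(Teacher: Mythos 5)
Your proof is correct, and it takes a genuinely different route from the paper's. The paper observes that $f(y) = -\log\sin(y/2)$ is strictly convex on $(0,2\pi)$ (as a composition of the strictly convex, decreasing $-\log$ with the strictly concave $\sin(\cdot/2)$), and then writes $I(x,y) = -\bigl(f(y-x) + f(y+x) - 2f(y)\bigr) < 0$ directly from strict convexity, since $y-x \ne y+x$. Your argument instead reduces the claim to the elementary identity
\begin{equation*}
  \sin\!\left(\tfrac{y - x}{2}\right)\sin\!\left(\tfrac{x + y}{2}\right) = \tfrac{1}{2}(\cos x - \cos y),
  \qquad
  \sin^2\!\left(\tfrac{y}{2}\right) = \tfrac{1}{2}(1 - \cos y),
\end{equation*}
so that the sign of $I(x,y)$ is governed by $\cos x < 1$, which holds exactly for $x \notin 2\pi\mathbb{Z}$. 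Your computation is more elementary and self-contained — it buys an explicit, one-line reason for the strict inequality and makes transparent that the bound degenerates only at $x=0$. The paper's convexity argument is slightly more abstract but extends without change to the companion statement in the preceding lemma (monotonicity of $\hat I$ in $t$ and $x$), where similar convexity/concavity facts are reused; that is the advantage of its packaging. One small point worth making explicit in your write-up: you should note, as you implicitly rely on, that the numerator and denominator inside the logarithm are both positive on the stated range of $(x,y)$, so that $I(x,y)$ is real and the comparison of the argument to $1$ is meaningful. This is immediate since $(y-x)/2$, $(x+y)/2$, and $y/2$ all lie in $(0,\pi)$ when $0 < x < y < \pi$.
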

\begin{proof}
  The function \(f(y) = -\log(\sin(y / 2))\) is strictly convex on the
  domain \((0, 2\pi)\). This follows from the fact that \(-\log(y)\)
  is strictly convex and decreasing and \(\sin(y / 2)\) is strictly
  concave on the interval. It immediately follows that
  \begin{equation*}
    I(x, y) = \log((\sin((y - x) / 2))) + \log((\sin((y + x) / 2))) - 2\log((\sin(y / 2))) < 0.
  \end{equation*}
\end{proof}
With these two lemmas we can slightly simplify \(U_{1}\) and
\(U_{2}\), we get
\begin{multline*}
  U_{1}(x) = x^{2}\int_{0}^{r_{x}} \log\left(\frac{\sin(x(1 - t) / 2)\sin(x(1 + t) / 2)}{\sin(xt / 2)^{2}}\right)t\sqrt{\log(1 + 1 / (xt))}\ dt\\
  -x^{2}\int_{r_{x}}^{1} \log\left(\frac{\sin(x(1 - t) / 2)\sin(x(1 + t) / 2)}{\sin(xt / 2)^{2}}\right)t\sqrt{\log(1 + 1 / (xt))}\ dt\\
  = x^{2}(U_{1,1}(x) + U_{1,2}(x))
\end{multline*}
and
\begin{equation*}
  U_{2}(x) = -\int_{x}^{\pi}\log\left(\frac{\sin((y - x) / 2)\sin((x + y) / 2)}{\sin(y / 2)^{2}}\right)y\sqrt{\log(1 + 1 / y)}\ dy
\end{equation*}

Similarly to in the previous section we divide the interval
\([0, \pi]\), on which we take the supremum, into two parts,
\([0, \epsilon]\) and \([\epsilon, \pi]\).

For the interval \([\epsilon, \pi]\) we split \(\mathcal{T}(x)\) as
\begin{equation*}
  \mathcal{T}(x) = \frac{U(x)}{\pi x \sqrt{\log(1 + 1 / x)} u_{0}(x)}
  = \frac{1}{u_{0}(x)}\left(
    \frac{x(U_{1,1}(x) + U_{1,2}(x))}{\pi \sqrt{\log(1 + 1 / x)}}
    + \frac{U_{2}(x)}{\pi x \sqrt{\log(1 + 1 / x)}}
  \right).
\end{equation*}
See Appendix \ref{sec:rigorous-integration} for details on how
\(U_{1,1}\), \(U_{1,2}\) and \(U_{2}\) are computed.

For the interval \([0, \epsilon]\) write \(\mathcal{T}(x)\) as
\begin{equation*}
  \mathcal{T}(x) = \frac{1}{\pi} \cdot \frac{-x\log x}{u_{0}(x)} \cdot \left(
    \frac{U_{1}(x)}{-x^{2}\log x \sqrt{\log(1 + 1 / x)}}
    + \frac{U_{2}(x)}{-x^{2}\log x \sqrt{\log(1 + 1 / x)}}
  \right).
\end{equation*}
The factor \(\frac{-x\log x}{u_{0}(x)}\) is handled by Lemma
\ref{lemma:asymptotic-inv-u0} as before. For the two remaining terms
we have the following two lemmas:
\begin{lemma}
  \label{lemma:asymptotic-U1}
  For \(x \in [0, \epsilon]\) with \(\epsilon < 1\) we have
  \begin{multline*}
    \frac{U_{1}(x)}{-x^{2}\log x \sqrt{\log(1 + 1 / x)}} \leq
    \frac{1}{\sqrt{\log(1 + 1/x)}}\Bigg(
    \frac{\log 2}{\sqrt{\log(1/x)}}
    + \frac{c_{1} + \log 2\sqrt{\log(1 + x)}}{\log(1/x)}\\
    + \frac{3R_{1}}{8}x^{2}\left(
      \frac{2}{\sqrt{\log(1 / x)}}
      + \frac{\sqrt{\pi / 2} + 2\sqrt{\log(1 + x)}}{\log(1 / x)}
    \right)
    \Bigg)
  \end{multline*}
  where
  \begin{align*}
    c_{1} &= \int_{0}^{1}|\log(1 / t^{2} - 1)|t\sqrt{\log(1 / t)}\ dt,\\
    R_{1} &= \sup_{y \in [0, \epsilon]} \frac{1}{2}\left|\frac{d^{2}}{dy^{2}}\log\left(\frac{\sin(y)}{y}\right)\right|.
  \end{align*}
\end{lemma}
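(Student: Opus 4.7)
The plan is to decompose the integrand of $U_{1}$ into a singular, $x$-independent piece and a smooth sinc-correction of size $O(x^{2})$, and then to bound each piece using a handful of explicit integrals. Since $-\log x = \log(1/x) > 0$ for $x \in (0,1)$, the stated inequality is equivalent to an upper bound on
\begin{equation*}
  \frac{1}{\log(1/x)\sqrt{\log(1 + 1/x)}}\int_{0}^{1}|\hat{I}(x,t)|\, t\sqrt{\log(1 + 1/(xt))}\,dt.
\end{equation*}
Factoring $\sin(y) = y \cdot (\sin(y)/y)$ inside $\hat{I}$, I would write
\begin{equation*}
  \hat{I}(x,t) = \log\!\left(\frac{1 - t^{2}}{t^{2}}\right) + \Delta(x,t), \qquad \Delta(x,t) := g(x(1-t)/2) + g(x(1+t)/2) - 2g(xt/2),
\end{equation*}
where $g(y) := \log(\sin(y)/y)$ is even, analytic near $0$, and satisfies $g(0) = g'(0) = 0$.

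By Taylor's remainder formula these two properties give $|g(y)| \le R_{1} y^{2}$ for $y \in [0,\epsilon]$, so the triangle inequality yields
\begin{equation*}
  |\Delta(x,t)| \le R_{1}\!\left[\tfrac{x^{2}(1-t)^{2}}{4} + \tfrac{x^{2}(1+t)^{2}}{4} + \tfrac{x^{2}t^{2}}{2}\right] = \tfrac{1}{2}R_{1}x^{2}(1 + 2t^{2}) \le \tfrac{3}{2}R_{1}x^{2}.
\end{equation*}
For the weight I would use $\log(1 + 1/(xt)) = \log(1/x) + \log(1/t) + \log(1 + xt)$, together with subadditivity of the square root and the bound $\log(1 + xt) \le \log(1 + x)$ for $t \in (0,1)$, to get
\begin{equation*}
  \sqrt{\log(1 + 1/(xt))} \le \sqrt{\log(1/x)} + \sqrt{\log(1/t)} + \sqrt{\log(1 + x)}.
\end{equation*}

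Assembling these ingredients, the integral $\int_{0}^{1}|\hat{I}(x,t)|\,t\sqrt{\log(1 + 1/(xt))}\,dt$ splits into six elementary contributions. The three coming from $|\log((1 - t^{2})/t^{2})|$ reduce to $\int_{0}^{1}|\log((1-t^{2})/t^{2})|\,t\,dt = \log 2$ (computed by substituting $u = t^{2}$ and exploiting the symmetry $u \mapsto 1 - u$) and, by definition, to $c_{1}$; the three coming from the $\Delta$-bound reduce to $\int_{0}^{1}t\,dt = \tfrac{1}{2}$ and $\int_{0}^{1}t\sqrt{\log(1/t)}\,dt = \tfrac{1}{4}\sqrt{\pi/2}$ (the latter by $u = -\log t$ and $\Gamma(3/2) = \tfrac{\sqrt{\pi}}{2}$). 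Dividing by $\log(1/x)\sqrt{\log(1 + 1/x)}$ and regrouping yields the stated three-term bound, with the prefactor $\tfrac{3}{8}R_{1}$ arising from $\tfrac{3}{2}R_{1} \cdot \tfrac{1}{4}$ after pulling a common factor of $2$ into the $1/\sqrt{\log(1/x)}$ summand. The main obstacle is really just a packaging issue: obtaining the decomposition $\hat I = \log((1-t^{2})/t^{2}) + \Delta$ with the quadratic-in-$x$ control on $\Delta$ that has the correct constants, after which the remaining work is bookkeeping of the six integrals.
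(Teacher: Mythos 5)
Your proposal is correct and follows essentially the same route as the paper's proof: the same decomposition of $\hat{I}(x,t)$ into the singular $x$-independent term $\log(1/t^2-1)$ plus a $\sinc$-correction, the same Taylor bound $|\log(\sinc y)| \le R_1 y^2$, the same subadditivity estimate for $\sqrt{\log(1+1/(xt))}$, and the same four elementary integrals ($\log 2$, $c_1$, $1/2$, and $\tfrac{1}{4}\sqrt{\pi/2}$). The only cosmetic difference is that you note the slightly tighter intermediate bound $\tfrac{1}{2}R_1x^2(1+2t^2)$ before coarsening to $\tfrac{3}{2}R_1x^2$, which the paper writes directly.
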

\begin{proof}
  As a first step we split \(\hat{I}(x, t)\) into one main term and
  one remainder term. We can write \(\hat{I}(x, t)\) as
  \begin{equation*}
    \hat{I}(x, t) = \log\left(\sin\left(\frac{x(1 - t)}{2}\right)\right)
    + \log\left(\sin\left(\frac{x(1 + t)}{2}\right)\right)
    - 2\log\left(\sin\left(\frac{xt}{2}\right)\right).
  \end{equation*}
  Using that
  \begin{equation*}
    \log\left(\sin\left(\frac{x(1 - t)}{2}\right)\right)
    = \log\left(\sinc\left(\frac{x(1 - t)}{2}\right)\right) + \log\left(\frac{x(1 - t)}{2}\right),
  \end{equation*}
  where \(\sinc x = \frac{\sin x}{x}\), and similarly for the other
  log-sin terms, we can split \(\hat{I}(x, t)\) as
  \begin{multline*}
    \hat{I}(x, t) = \log\left(\frac{x(1 - t)}{2}\right) + \log\left(\frac{x(1 + t)}{2}\right) - 2\log\left(\frac{xt}{2}\right)\\
    + \log\left(\sinc\left(\frac{x(1 - t)}{2}\right)\right) + \log\left(\sinc\left(\frac{x(1 + t)}{2}\right)\right) - 2\log\left(\sinc\left(\frac{xt}{2}\right)\right).
  \end{multline*}
  where
  \begin{equation*}
    \log\left(\frac{x(1 - t)}{2}\right) + \log\left(\frac{x(1 + t)}{2}\right) - 2\log\left(\frac{xt}{2}\right)
    = \log(1 - t) + \log(1 + t) - 2\log(t) = \log(1 / t^{2} - 1).
  \end{equation*}
  Note that for small \(x\), \(\sinc(x)\) is close to one, and the
  corresponding log-terms will therefore be small. We split
  \(U_{1}(x)\) as
  \begin{multline*}
    U_{1}(x) \leq x^{2}\int_{0}^{1}\left|\log(1 / t^{2} - 1)\right|t\sqrt{\log(1 + 1/(xt))}\ dt\\
    + x^{2}\int_{0}^{1}\left|
      \log\left(\sinc\left(\frac{x(1 - t)}{2}\right)\right) + \log\left(\sinc\left(\frac{x(1 + t)}{2}\right)\right) - 2\log\left(\sinc\left(\frac{xt}{2}\right)\right)
    \right|t\sqrt{\log(1 + 1/(xt))}\ dt\\
    = U_{1,m}(x) + U_{1,r}(x).
  \end{multline*}

  Focusing in \(U_{1,m}\) we note that for \(t \in (0, 1)\) we have
  \begin{multline}
    \label{eq:norm-BH-sqrt-inequality}
    \sqrt{\log(1 + 1/(xt))} = \sqrt{\log(1 / x) + \log(1 / t) + \log(1 + xt)}\\
    \leq \sqrt{\log(1 / x)} + \sqrt{\log(1 / t)} + \sqrt{\log(1 + xt)}\\
    \leq \sqrt{\log(1 / x)} + \sqrt{\log(1 / t)} + \sqrt{\log(1 + x)}.
  \end{multline}
  Hence
  \begin{multline*}
    U_{1,m}(x) \leq x^{2}\Bigg(
      \sqrt{\log(1 / x)} \int_{0}^{1}\left|\log(1 / t^{2} - 1)\right|t\ dt
      + \int_{0}^{1}\left|\log(1 / t^{2} - 1)\right|t\sqrt{\log(1 / t)}\ dt\\
      + \sqrt{\log(1 + x)} \int_{0}^{1}\left|\log(1 / t^{2} - 1)\right|t\ dt
    \Bigg).
  \end{multline*}
  We have
  \(\int_{0}^{1}\left|\log(1 / t^{2} - 1)\right|t\ dt = \log 2\) and
  if we let
  \begin{align*}
    c_{1} &= \int_{0}^{1}\left|\log(1 / t^{2} - 1)\right|t\sqrt{\log(1 / t)}\ dt
  \end{align*}
  this gives us
  \begin{equation*}
    U_{1,m}(x) \leq x^{2}\left(\sqrt{\log(1 / x)}\log 2 + c_{1} + \log 2 \sqrt{\log(1 + x)}\right).
  \end{equation*}

  For \(U_{1,r}\) we will give a uniform bound of
  \begin{equation*}
    \log\left(\sinc\left(\frac{x(1 - t)}{2}\right)\right) + \log\left(\sinc\left(\frac{x(1 + t)}{2}\right)\right) - 2\log\left(\sinc\left(\frac{xt}{2}\right)\right)
  \end{equation*}
  in \(t\) on the interval \([0, 1]\) and use this to simplify the
  integrand. Note that \(\frac{x(1 - t)}{2}\), \(\frac{x(1 + t)}{2}\)
  and \(\frac{xt}{2}\) all lie on the interval \([0, x]\). The
  function \(\log(\sinc(y))\) is analytic around \(y = 0\) and the
  first two terms in the Taylor expansion are zero, by Taylor's
  Theorem we hence have
  \begin{equation*}
    |\log(\sinc(y))| \leq R_{1}y^{2}
  \end{equation*}
  where
  \begin{equation*}
    R_{1} = \sup_{y \in [0, x]} \frac{1}{2}\left|\frac{d}{dy^{2}}\log(\sinc(y))\right|.
  \end{equation*}
  This gives us
  \begin{multline*}
    \left|\log\left(\sinc\left(\frac{x(1 - t)}{2}\right)\right)
      + \log\left(\sinc\left(\frac{x(1 + t)}{2}\right)\right)
      - 2\log\left(\sinc\left(\frac{xt}{2}\right)\right)\right|\\
    \leq R_{1}x^{2}\left(\frac{(1 - t)^{2}}{4} + \frac{(1 + t)^{2}}{4} + \frac{t^{2}}{2}\right)
    \leq \frac{3R_{1}}{2}x^{2}.
  \end{multline*}
  With this we get
  \begin{equation*}
    U_{1,r}(x) \leq \frac{3R_{1}}{2}x^{4}\int_{0}^{1}t\sqrt{\log(1 + 1/(xt))}\ dt.
  \end{equation*}
  Using the inequality \eqref{eq:norm-BH-sqrt-inequality} we have
  \begin{equation*}
    \int_{0}^{1}t\sqrt{\log(1 + 1/(xt))}\ dt
    \leq \sqrt{\log(1 / x)}\int_{0}^{1}t\ dt\\
    + \int_{0}^{1}t\sqrt{\log(1 / t)}\ dt
    + \sqrt{\log(1 + x)}\int_{0}^{1}t\ dt.
  \end{equation*}
  With
  \begin{equation*}
    \int_{0}^{1}t\ dt = \frac{1}{2},\quad \int_{0}^{1}t\sqrt{\log(1 / t)}\ dt = \frac{\sqrt{\pi / 2}}{4}
  \end{equation*}
  this gives
  \begin{equation*}
    U_{1,r}(x) \leq \frac{3R_{1}}{8}x^{4}\left(
      2\sqrt{\log(1 / x)}
      + \sqrt{\pi / 2}
      + 2\sqrt{\log(1 + x)}
    \right)
  \end{equation*}

  Combining the bound for \(U_{1,m}\) with that for \(U_{1,r}\) gives
  us the result.
\end{proof}

\begin{lemma}
  \label{lemma:asymptotic-U2}
  For \(x \in [0, \epsilon]\) with \(\epsilon < \frac{1}{2}\) we have
  \begin{multline*}
    \frac{U_{2}(x)}{-x^{2}\log x \sqrt{\log(1 + 1 / x)}} \leq
    \frac{\log\left(\frac{16}{3\sqrt{3}}\right)}{\log(1/x)}\\
    +\Bigg(
    \frac{\frac{2}{3}\log(1/(2x))^{\frac{3}{2}}}{\log(1/x)\sqrt{\log(1 + 1/x)}}
    + \frac{R_{2}\sqrt{\log(1/(2x))}}{8\log(1/x)\sqrt{\log(1 + 1/x)}}
    - \frac{R_{2}(1 - 4x^{2})}{16\sqrt{\log(1 / (2x))}\log(1/x)\sqrt{\log(1 + 1/x)}}\\
    + \frac{\sqrt{\log 2}}{\sqrt{\log(1 + 1/x)}}
    - \frac{\log(2)^{\frac{3}{2}}}{\log(1/x)\sqrt{\log(1 + 1/x)}}
    + \frac{R_{2}\sqrt{\log 2}(1 - 4x^{2})}{8\log(1/x)\sqrt{\log(1 + 1/x)}}
    \Bigg)\\
    + \frac{\sqrt{\log 2}}{\log(1/x)\sqrt{\log(1 + 1/x)}}\left(
      \frac{1}{2}\log\left(\frac{\pi^{2} - x^{2}}{1 - x^{2}}\right)
      + \frac{\log(1 - x^{2})}{2x^{2}}
      - \frac{\log\left(1 - \frac{x^{2}}{\pi^{2}}\right)}{2x^{2}}
    \right)
    + \frac{D_{1}c_{2}}{\log(1/x)\sqrt{\log(1 + 1/x)}}
  \end{multline*}
  where
  \begin{align*}
    c_{2} &= \int_{0}^{\pi}y\sqrt{\log(1 + 1/y)}\ dy,\\
    R_{2} &= \sup_{y \in [0, 1/4]} \frac{1}{2}\left|\frac{d^{2}}{dy^{2}}\log(1 - y)\right|,\\
    D_{1} &= \sup_{x \in [0, \epsilon]} -\frac{\log\left(\sinc\left(\frac{\pi - x}{2}\right)\right) + \log\left(\sinc\left(\frac{\pi + x}{2}\right)\right) - 2\log\left(\sinc\left(\frac{\pi}{2}\right)\right)}{x^{2}}.
  \end{align*}
\end{lemma}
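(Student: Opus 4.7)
The plan is to follow the decomposition strategy used in Lemma~\ref{lemma:asymptotic-U1}. Writing each of the three sine factors in $I(x,y)$ as $\sin(z) = z\,\sinc(z)$, one separates the log into an ``explicit'' log-polynomial piece and a ``sinc-residual'' piece:
$$I(x,y) = \log\!\left(1 - \frac{x^{2}}{y^{2}}\right) + J(x,y),$$
with $J(x,y) := \log(\sinc((y-x)/2)) + \log(\sinc((y+x)/2)) - 2\log(\sinc(y/2))$. Lemma~\ref{lemma:I-x-pi} gives $I<0$ on $(x,\pi)$, so $|I|=-I$ and $U_{2}$ splits cleanly into a log-part and a $J$-part.

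For the $J$-part, differentiating twice in $x$ at $x=0$ one finds $J(0,y) = J_{x}(0,y) = 0$ and $J_{xx}(0,y) = \tfrac{1}{2}(\log\sinc)''(y/2)$, where $(\log\sinc)''(z) = 1/z^{2} - \csc^{2} z$ is monotone on $(0,\pi/2)$. Hence the worst-case value of $|J(x,y)|/x^{2}$ over $y\in[x,\pi]$ is attained at $y=\pi$; this is precisely the role of the constant $D_{1}$, producing the uniform estimate $|J(x,y)| \leq D_{1}x^{2}$ and, after integrating against $y\sqrt{\log(1+1/y)}$, the $D_{1}c_{2}$ contribution in the statement.

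For the log-part, I would split the $y$-integration into the three sub-intervals $(x,2x)$, $(2x,1)$, $(1,\pi)$. The cutoff at $y=2x$ ensures $u := x^{2}/y^{2}\in[0,1/4]$ on $(2x,\pi)$, the domain on which the Taylor bound $-\log(1-u)\le u + R_{2}u^{2}$ is valid. The cutoff at $y=1$ is convenient because $\sqrt{\log(1+1/y)}\le\sqrt{\log 2}$ on $(1,\pi)$. On $(x,2x)$ I would use the monotonicity $\sqrt{\log(1+1/y)}\le\sqrt{\log(1+1/x)}$ and evaluate $\int_{x}^{2x}(-\log(1-x^{2}/y^{2}))\,y\,dy$ exactly via the substitution $y=xt$; it reduces to the numerical constant $\int_{1}^{2}(-\log(1-1/t^{2}))\,t\,dt = \log(16/(3\sqrt{3}))$, which explains the leading $\log(16/(3\sqrt{3}))/\log(1/x)$ term. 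On $(2x,1)$ I would apply the Taylor bound together with the sub-additive inequality $\sqrt{\log(1+1/y)}\le\sqrt{\log(1/y)}+\sqrt{\log(1+y)}\le\sqrt{\log(1/y)}+\sqrt{\log 2}$, and then evaluate the resulting pieces with the elementary primitives $\int y^{-1}\sqrt{\log(1/y)}\,dy = -\tfrac{2}{3}\log(1/y)^{3/2}$ and $\int y^{-3}\,dy = -1/(2y^{2})$, supplemented by an integration by parts for $\int y^{-3}\sqrt{\log(1/y)}\,dy$ (which produces the $R_{2}\sqrt{\log(1/(2x))}$ and $R_{2}(1-4x^{2})$ corrections). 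On $(1,\pi)$, the integral $\int_{1}^{\pi}(-\log(1-x^{2}/y^{2}))\,y\,dy$ can be computed in closed form via $z=y^{2}$, yielding logarithmic combinations of $\pi^{2}-x^{2}$, $1-x^{2}$ and $1-x^{2}/\pi^{2}$; combining with $\sqrt{\log(1+1/y)}\le\sqrt{\log 2}$ produces the $\sqrt{\log 2}$-block.

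Summing all contributions and dividing by $-x^{2}\log x\sqrt{\log(1+1/x)}$ yields the claimed bound. The main obstacle is not conceptual but is the careful bookkeeping required: the factor $\sqrt{\log(1+1/y)}$ does not factor out of any of the integrals, so the three-interval split and the sub-additive inequality have to be applied piecewise, and the various primitives and boundary terms then account for the proliferation of explicit terms in the statement.
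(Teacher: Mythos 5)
Your proposal reproduces the paper's decomposition almost exactly: the split of $I(x,y)$ into the main log-part $\log(1-(x/y)^2)$ and the sinc-residual $J$, the substitution $t = y/x$, the three sub-intervals corresponding to $t\in[1,2]$, $t\in[2,1/x]$, $t\in[1/x,\pi/x]$, the Taylor bound $-\log(1-u)\le u + R_2 u^2$ with the subadditivity of $\sqrt{\cdot}$, and the explicit primitives. (The paper records the $\int t^{-3}\sqrt{\log(1/(xt))}\,dt$ integral in terms of $\erfi$ and then bounds $\erfi$ away using a DLMF inequality, rather than integrating by parts, but the end result is the same.)

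The one place where your sketch has a genuine gap is the $J$-part. You compute $J_{xx}(0,y) = \tfrac{1}{2}(\log\sinc)''(y/2)$ and note that this is monotone in $y$, then conclude that $\sup_{y\in[x,\pi]} |J(x,y)|/x^2$ is attained at $y=\pi$. That inference does not follow: monotonicity of the $x\to 0$ Taylor coefficient says nothing about where the supremum over $y$ of the full quantity $-J(x,y)/x^2$ sits for finite $x$, and the statement's definition of $D_1$ (a supremum at $y=\pi$ only) requires exactly that. The paper proves the needed monotonicity directly: with $f(y)=\log(\sinc(y/2))$ one has $-J(x,y)=-(f(y-x)+f(y+x)-2f(y))$, whose $y$-derivative is $-(f'(y-x)+f'(y+x)-2f'(y))\ge 0$ because $f'$ is concave on $(0,2\pi)$ (which in turn follows from $f'''(y) = \tfrac{1}{4}\cot(y/2)\csc^2(y/2) - 2/y^3 < 0$, checked via its Taylor coefficients). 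So $-J(x,y)/x^2$ is increasing in $y$ for every fixed $x$, which is the fact that actually pins the supremum at $y=\pi$. You should replace your leading-order Taylor observation with this second-difference/concavity argument; once that is done, the rest of the proposal matches the paper.
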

\begin{proof}
  Similarly as in the previous lemma we split \(I(x, y)\) into one
  main term and one remainder term. In this case we get
  \begin{align*}
    I(x, y) =& \log((y - x) / 2) + \log((y + x) / 2) - 2\log(y / 2)\\
             &+ \log(\sinc((y - x) / 2)) + \log(\sinc((y + x) / 2)) - 2\log(\sinc(y / 2)).
  \end{align*}
  where
  \begin{equation*}
    \log((y - x) / 2) + \log((y + x) / 2) - 2\log(y / 2) = \log(1 - (x / y)^{2}).
  \end{equation*}
  Note that the log-sinc terms are not individually small since \(y\)
  is not in general small, but for small values of \(x\) they mostly
  cancel out. We split \(U_{2}\) as
  \begin{multline*}
    U_{2}(x) = \int_{x}^{\pi}-\log(1 - (x / y)^{2})y\sqrt{\log(1 + 1 / y)}\ dy\\
    + \int_{x}^{\pi}-(\log(\sinc((y - x) / 2)) + \log(\sinc((y + x) / 2)) - 2\log(\sinc(y / 2)))y\sqrt{\log(1 + 1 / y)}\ dy\\
    = U_{2,m}(x) + U_{2,r}(x)
  \end{multline*}

  Due to the occurrence of \(x / y\) in \(U_{2,m}\) it is natural to
  switch coordinates to \(t = y / x\), as was done for \(U_{1}\). This
  gives us
  \begin{equation*}
    U_{2,m}(x) = x^{2}\int_{1}^{\pi / x}-\log(1 - 1 / t^{2})t\sqrt{\log(1 + 1/(xt))}\ dt.
  \end{equation*}
  Next we split the interval \([1, \pi / x]\) into three parts,
  \([1, 2]\), \([2, 1 / x]\) and \([1 / x, \pi / x]\), and treat each
  of them separately. Let
  \begin{align*}
    U_{2,m,1}(x) &= x^{2}\int_{1}^{2}-\log(1 - 1 / t^{2})t\sqrt{\log(1 + 1/(xt))}\ dt,\\
    U_{2,m,2}(x) &= x^{2}\int_{2}^{1 / x}-\log(1 - 1 / t^{2})t\sqrt{\log(1 + 1/(xt))}\ dt,\\
    U_{2,m,3}(x) &= x^{2}\int_{1 / x}^{\pi / x}-\log(1 - 1 / t^{2})t\sqrt{\log(1 + 1/(xt))}\ dt.
  \end{align*}

  For \(U_{2,m,1}\) we note that \(\sqrt{\log(1 + 1/(xt))}\) is
  decreasing in \(t\) and hence upper bounded by the value at
  \(t = 1\), which is \(\sqrt{\log(1 + 1 / x)}\). We therefore have
  \begin{equation*}
    U_{2,m,1}(x) \leq x^{2}\sqrt{\log(1 + 1 / x)} \int_{1}^{2}-\log(1 - 1 / t^{2}) t\ dt
    = x^{2}\sqrt{\log(1 + 1 / x)} \log\left(\frac{16}{3\sqrt{3}}\right).
  \end{equation*}

  For \(U_{2,m,2}\) we use a Taylor expansion of
  \(\log(1 - 1 / t^{2})\) at \(t = \infty\) and explicitly bound the
  remainder term. For \(y \in [0, 1 / 4]\) we have
  \begin{equation*}
    -\log(1 - y) \leq y + R_{2}y^{2}
  \end{equation*}
  with
  \begin{equation*}
    R_{2} = \sup_{y \in [0, 1/4]} \frac{1}{2}\left|\frac{d^{2}}{dy^{2}}\log(1 - y)\right|.
  \end{equation*}
  This gives us that for \(t > 2\) we have
  \begin{equation*}
    -\log(1 - 1 / t^{2}) \leq \frac{1}{t^{2}} + \frac{R_{2}}{t^{4}}.
  \end{equation*}
  We also use the inequality
  \begin{equation*}
    \sqrt{\log(1 + 1 / (xt))}
    = \sqrt{\log(1 + xt) + \log(1 / (xt))}
    \leq \sqrt{\log(1 + xt)} + \sqrt{\log(1 / (xt))}
  \end{equation*}
  together with the bound \(\sqrt{\log(1 + xt)} < \sqrt{\log 2}\) for
  \(2 \leq t \leq 1 / x\) to split \(U_{2,m,2}\) as
  \begin{multline*}
    U_{2,m,2}(x) \leq x^{2}\Bigg(
    \int_{2}^{1 / x}\frac{\sqrt{\log(1 / (xt))}}{t}\ dt
    + R_{2}\int_{2}^{1 / x}\frac{\sqrt{\log(1/(xt))}}{t^{3}}\ dt\\
    + \sqrt{\log 2}\int_{2}^{1 / x}\frac{1}{t}\ dt
    + R_{2}\sqrt{\log 2}\int_{2}^{1 / x}\frac{1}{t^{3}}\ dt
    \Bigg).
  \end{multline*}
  For the integrals we have
  \begin{align*}
    \int_{2}^{1 / x}\frac{\sqrt{\log(1 / (xt))}}{t}\ dt &= \frac{2}{3}\log(1/(2x))^{\frac{3}{2}},\\
    \int_{2}^{1 / x}\frac{\sqrt{\log(1 / (xt))}}{t^{3}}\ dt
                                                        &= \frac{\sqrt{\log(1/(2x))} - \sqrt{2\pi}x^{2}\erfi(\sqrt{\log(1/(4x^{2}))})}{8},\\
    \int_{2}^{1 / x}\frac{1}{t}\ dt &= \log(1 / x) - \log 2,\\
    \int_{2}^{1 / x}\frac{1}{t^{3}}\ dt &= \frac{1 - 4x^{2}}{8}.
  \end{align*}
  We can avoid the \(\erfi\) function by using that
  \begin{multline*}
    x^{2}\erfi(\sqrt{\log(1/(4x^{2}))})
    = \frac{2}{\sqrt{\pi}}x^{2}\int_{0}^{\sqrt{\log(1 / (4x^{2}))}} e^{t^{2}}\ dt\\
    \leq \frac{2}{\sqrt{\pi}}x^{2} \frac{e^{\left(\sqrt{\log(1 / (4x^{2}))}\right)^{2}} - 1}{\sqrt{\log(1 / (4x^{2}))}}
    = \frac{1}{2\sqrt{2\pi}} \frac{1 - 4x^{2}}{\sqrt{\log(1 / (2x))}}.
  \end{multline*}
  Where we have used \cite[Eq.~7.8.7]{NIST:DLMF}. This gives us
  \begin{multline*}
    U_{2,m,2}(x) \leq x^{2}\Bigg(
    \frac{2}{3}\log(1/(2x))^{\frac{3}{2}}
    + R_{2}\frac{\sqrt{\log(1/(2x))}}{8}
    - R_{2}\frac{1 - 4x^{2}}{16\sqrt{\log(1 / (2x))}}\\
    + \sqrt{\log 2}\log(1 / x)
    - \log(2)^{\frac{3}{2}}
    + R_{2}\sqrt{\log 2}\frac{1 - 4x^{2}}{8}
    \Bigg).
  \end{multline*}

  For \(U_{2,m,3}\) we use that
  \(\sqrt{\log(1 + 1/(xt))} < \sqrt{\log 2}\) for
  \(1 / x < t < \pi / x\), hence
  \begin{equation*}
    U_{2,m,3}(x) \leq x^{2}\sqrt{\log 2} \int_{1 / x}^{\pi / x}-\log(1 - 1 / t^{2})t\ dt.
  \end{equation*}
  The integral can be explicitly computed to be
  \begin{multline*}
    \int_{1 / x}^{\pi / x}-\log(1 - 1 / t^{2})t\ dt\\
    = \left(
      \frac{1}{2}\left(1 - \frac{\pi^{2}}{x^{2}}\right)\log\left(1 - \frac{x^{2}}{\pi^{2}}\right)
      + \log\left(\frac{\pi}{x}\right)
    \right)
    - \left(
      \frac{1}{2}\left(1 - \frac{1}{x^{2}}\right)\log\left(1 - x^{2}\right)
      + \log\left(\frac{1}{x}\right)
    \right).
  \end{multline*}
  Reordering and simplifying this gives us
  \begin{equation*}
    \int_{1 / x}^{\pi / x}-\log(1 - 1 / t^{2})t\ dt =
    \frac{1}{2}\log\left(\frac{\pi^{2} - x^{2}}{1 - x^{2}}\right)
    + \frac{\log(1 - x^{2})}{2x^{2}}
    - \frac{\log\left(1 - \frac{x^{2}}{\pi^{2}}\right)}{2x^{2}}
  \end{equation*}
  and hence
  \begin{equation*}
    U_{2,m,3}(x) \leq x^{2}\sqrt{\log 2}
    \left(
      \frac{1}{2}\log\left(\frac{\pi^{2} - x^{2}}{1 - x^{2}}\right)
      + \frac{\log(1 - x^{2})}{2x^{2}}
      - \frac{\log\left(1 - \frac{x^{2}}{\pi^{2}}\right)}{2x^{2}}
    \right).
  \end{equation*}

  Putting \(U_{2,m,1}\), \(U_{2,m,2}\) and \(U_{2,m,3}\) together we
  arrive at
  \begin{multline*}
    U_{2,m}(x) \leq x^{2}\Bigg(
    \sqrt{\log(1 + 1 / x)} \log\left(\frac{16}{3\sqrt{3}}\right)\\
    + \Bigg(
    \frac{2}{3}\log(1/(2x))^{\frac{3}{2}}
    + R_{2}\frac{\sqrt{\log(1/(2x))}}{8}
    - R_{2}\frac{\sqrt{2\pi}x^{2}\erfi(\sqrt{\log(1/(4x^{2}))})}{8}\\
    + \sqrt{\log 2}\log(1 / x)
    - \log(2)^{\frac{3}{2}}
    + R_{2}\sqrt{\log 2}\frac{1 - 4x^{2}}{8}
    \Bigg)\\
    + \sqrt{\log 2}
    \left(
      \log\left(\frac{\pi^{2} - x^{2}}{1 - x^{2}}\right)
      + \frac{\log(1 - x^{2})}{2x^{2}}
      - \frac{\pi^{2}\log\left(1 - \frac{x^{2}}{\pi^{2}}\right)}{2x^{2}}
    \right)
    \Bigg).
  \end{multline*}

  For \(U_{2,r}\) we use the fact that
  \begin{equation*}
    -\frac{
      \log\left(\sinc\left(\frac{y - x}{2}\right)\right)
      + \log\left(\sinc\left(\frac{y + x}{2}\right)\right)
      - 2\log\left(\sinc\left(\frac{y}{2}\right)\right)
    }{x^{2}}
  \end{equation*}
  is bounded for \(t \in [x, \pi]\) uniformly in \(x\). If we let
  \begin{equation}
    \label{eq:D1-x-y}
    D_{1} = \sup_{x \in [0, \epsilon]} \sup_{y \in [x, \pi]}
    -\frac{
      \log\left(\sinc\left(\frac{y - x}{2}\right)\right)
      + \log\left(\sinc\left(\frac{y + x}{2}\right)\right)
      - 2\log\left(\sinc\left(\frac{y}{2}\right)\right)
    }{x^{2}}
  \end{equation}
  we get
  \begin{equation*}
    U_{2,r}(x) \leq D_{1}x^{2}\int_{x}^{\pi}y\sqrt{\log(1 + 1/y)}\ dy.
  \end{equation*}
  If we also let
  \begin{equation*}
    c_{2} = \int_{0}^{\pi}y\sqrt{\log(1 + 1/y)}\ dy
  \end{equation*}
  we have \(U_{2,r}(x) \leq D_{1}c_{2}x^{2}\).

  To easier bound \(D_{1}\) we note that the function in
  \eqref{eq:D1-x-y} is increasing in \(y\) for \(y \in [x, \pi]\). To
  see this focus on the part
  \begin{equation*}
    \log\left(\sinc\left(\frac{y - x}{2}\right)\right) + \log\left(\sinc\left(\frac{y + x}{2}\right)\right) - 2\log\left(\sinc\left(\frac{y}{2}\right)\right).
  \end{equation*}
  which we want to show is decreasing in \(y\). If we let
  \(f(y) = \log\left(\sinc\left(\frac{y}{2}\right)\right)\) we can
  write the above as
  \begin{equation*}
    f(y - x) + f(y + x) - 2f(y).
  \end{equation*}
  Differentiating we have
  \begin{equation*}
    f'(y - x) + f'(y + x) - 2f'(y),
  \end{equation*}
  since \(f'\) is concave on \((0, 2\pi)\) this is non-positive. To
  see that \(f'\) indeed is concave on this interval it is enough to
  check that
  \begin{equation*}
    f'''(y) = \frac{\cot\left(\frac{y}{2}\right)}{4\sin^{2}\left(\frac{y}{2}\right)} - \frac{2}{y^{3}}
  \end{equation*}
  is negative on \((0, 2\pi)\), which follows from that the
  coefficients in the Taylor expansion at \(y = 0\) all are negative.
  This means that the supremum for \(D_{1}\) is attained at
  \(y = \pi\) and it reduces to
  \begin{equation*}
    D_{1} = \sup_{x \in [0, \epsilon]} -\frac{\log\left(\sinc\left(\frac{\pi - x}{2}\right)\right) + \log\left(\sinc\left(\frac{x + \pi}{2}\right)\right) - 2\log\left(\sinc\left(\frac{\pi}{2}\right)\right)}{x^{2}}.
  \end{equation*}
\end{proof}

To use these lemmas in the computer assisted proof the first step is
to compute enclosures of \(c_{1}\), \(c_{2}\), \(R_{1}\), \(R_{2}\)
and \(D_{1}\). For \(R_{2}\) this can be done directly using Taylor
arithmetic. For \(R_{1}\) and \(D_{1}\) we have removable
singularities that need to be dealt with, this can be done using the
approach described in Appendix \ref{sec:removable-singularities}. For
\(c_{1}\) and \(c_{2}\) we refer to Appendix
\ref{sec:rigorous-integration}.

For Lemma~\ref{lemma:asymptotic-U1} it is straightforward to compute
enclosures of all the terms in the upper bound by using monotonicity
properties in \(x\). For Lemma~\ref{lemma:asymptotic-U2} the
expression for the upper bound is more complicated. In particular
several of the terms contain removable singularities at \(x = 0\),
these are handled using the approach described in Appendix
\ref{sec:removable-singularities}. As an example we show how to bound
\begin{equation*}
  \frac{\frac{2}{3}\log(1/(2x))^{\frac{3}{2}}}{\log(1/x)\sqrt{\log(1 + 1/x)}},
\end{equation*}
the other terms can be done in a similar way.
\begin{lemma}
  The function
  \begin{equation*}
    f(x) = \frac{\frac{2}{3}\log(1/(2x))^{\frac{3}{2}}}{\log(1/x)\sqrt{\log(1 + 1/x)}}
  \end{equation*}
  is bounded from above by \(2 / 3\) and is decreasing in \(x\) on the
  interval \((0, 1 / 2)\).
\end{lemma}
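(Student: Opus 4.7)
The plan is to factor $f$ into a product of simple ratios each of which is visibly positive, bounded above by $1$, and monotone in $x$. Using $\log(1/(2x)) = \log(1/x) - \log 2$ and $\log(1+1/x) = \log(1/x) + \log(1+x)$, I would write
\[
f(x) = \frac{2}{3}\left(\frac{\log(1/(2x))}{\log(1/x)}\right)^{3/2}\left(\frac{\log(1/x)}{\log(1+1/x)}\right)^{1/2} = \frac{2}{3}\left(1 - \frac{\log 2}{\log(1/x)}\right)^{3/2}\left(1 + \frac{\log(1+x)}{\log(1/x)}\right)^{-1/2}.
\]

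For the upper bound: on $(0,1/2)$ one has $\log(1/x) > \log 2 > 0$ and $\log(1+x) > 0$, so each of the two parenthesised expressions lies strictly in $(0,1)$. Hence $f(x) < 2/3$, which proves the first claim (the value $2/3$ is the limit as $x\to 0^{+}$ but is not attained).

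For monotonicity I would argue factor-by-factor. The function $\log(1/x)$ is positive and strictly decreasing on $(0,1/2)$, so $\log 2/\log(1/x)$ is strictly increasing, and therefore $1-\log 2/\log(1/x)$ is strictly decreasing; the same then holds after raising to the power $3/2$. Similarly, $\log(1+x)$ is positive and strictly increasing, while $\log(1/x)$ is positive and strictly decreasing, so the ratio $\log(1+x)/\log(1/x)$ is strictly increasing, which makes $1+\log(1+x)/\log(1/x)$ strictly increasing, and hence its $-1/2$ power strictly decreasing. A product of positive strictly decreasing functions is strictly decreasing, so $f$ is strictly decreasing on $(0,1/2)$.

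There is no real obstacle here beyond bookkeeping the signs and checking that each of $\log(1/x)$, $\log(1/(2x))$, and $\log(1+x)$ is positive on the open interval $(0,1/2)$; in particular this approach avoids any differentiation of $f$ itself, which would produce an awkward rational expression in logarithms. The factorisation above exhibits $f$ as a product of simple terms whose monotonicity follows directly from the monotonicity of $\log$.
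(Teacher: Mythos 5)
Your proof is correct, and it takes a genuinely different (and cleaner) route than the paper's. The paper differentiates $f$ directly, arriving at a somewhat awkward rational expression in logarithms, and then establishes the sign of the derivative through a chain of inequalities culminating in the observation that $\log(1/x)\log(1/8) < 0$. You instead factor $f$ as
\begin{equation*}
  f(x) = \frac{2}{3}\left(1 - \frac{\log 2}{\log(1/x)}\right)^{3/2}\left(1 + \frac{\log(1+x)}{\log(1/x)}\right)^{-1/2},
\end{equation*}
using the elementary identities $\log(1/(2x)) = \log(1/x) - \log 2$ and $\log(1+1/x) = \log(1/x) + \log(1+x)$, and then read off both claims at once: each parenthesised factor lies in $(0,1)$ (hence $f < 2/3$), and each is a positive, strictly decreasing function of $x$ on $(0,1/2)$ (hence the product is strictly decreasing). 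This avoids differentiation entirely and exposes the structure of the bound: the value $2/3$ is exactly the limit as $x\to 0^+$, where both factors tend to $1$. Your approach is shorter, more transparent, and less prone to bookkeeping errors than the paper's derivative computation; the paper's method has no compensating advantage here, so your decomposition is arguably the better proof.
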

\begin{proof}
  Taking the limit \(x \to 0^{+}\) gives the value \(\frac{2}{3}\), it
  is hence enough to prove that it is decreasing in \(x\).
  Differentiating with respect to \(x\) gives us
  \begin{equation*}
    \frac{\sqrt{\log(1 / (2x))}}{3x(x + 1)\log(1 + 1 / x)^{3/2}\log(1 / x)^{2}}\left(
      \log(1 / (2x))\log(1 / x) - (x + 1)\log(1 + 1 / x)\log(4 / x)
    \right).
  \end{equation*}
  The sign is given by that of
  \begin{equation*}
    \log(1 / (2x))\log(1 / x) - (x + 1)\log(1 + 1 / x)\log(4 / x).
  \end{equation*}
  For \(x \in (0, 1 / 2)\) we get the upper bound
  \begin{align*}
    \log(1 / (2x))\log(1 / x) - (x + 1)\log(1 + 1 / x)\log(4 / x)
    &\leq \log(1 / (2x))\log(1 / x) - \log(1 + 1 / x)\log(4 / x)\\
    &\leq \log(1 / (2x))\log(1 / x) - \log(1 / x)\log(4 / x)\\
    &= \log(1 / x)(\log(1 / (2x)) - \log(4 / x))\\
    &= \log(1 / x)\log(1 / 8),
  \end{align*}
  which is negative.
\end{proof}

\section{Bounds for \(D_{0}\), \(\delta_{0}\) and \(n_{0}\)}
\label{sec:bounds-for-values}
We are now ready to give bounds for \(n_{0}\), \(\delta_{0}\) and
\(D_{0}\). Recall that they are given by
\begin{equation*}
  n_{0} = \sup_{x \in [0, \pi]} |N(x)|,\quad
  \delta_{0} = \sup_{x \in [0, \pi]} |F(x)|,\quad
  D_{0} = \sup_{x \in [0, \pi]} |\mathcal{T}(x)|.
\end{equation*}
In each case we split the interval \([0, \pi]\) into two parts,
\([0, \epsilon]\) and \([\epsilon, \pi]\), with \(\epsilon\) varying
for the different cases, and threat them separately. For the interval
\([0, \epsilon]\) we use the asymptotic bounds for the different
functions that were introduced in the previous two sections. For the
interval \([\epsilon, \pi]\) we evaluate the functions directly using
interval arithmetic. For the direct evaluation the only complicated
part is the computation of \(U(x)\) in \(\mathcal{T}(x)\), where we
proceed as discussed in Section \ref{sec:analysis-T} and
Appendix~\ref{sec:rigorous-integration}.

Consider the problem of enclosing the maximum of \(f\) on some
interval \(I\) to some predetermined tolerance. The main idea is to
iteratively bisect the interval \(I\) into smaller and smaller
subintervals. At every iteration we compute an enclosure of \(f\) on
each subinterval. From these enclosures a lower bound of the maximum
can be computed. We then discard all subintervals for which the
enclosure is less than the lower bound of the maximum, the maximum
cannot be attained there. For the remaining subintervals we check if
their enclosure satisfies the required tolerance, in that case we
don't bisect them further. If there are any subintervals left we
bisect them and continue with the next iteration. In the end, either
when there are no subintervals left to bisect or we have reached some
maximum number of iterations (to guarantee that the procedure
terminates), we return the maximum of all subintervals that were not
discarded. This is guaranteed to give an enclosure of the maximum of
\(f\) on the interval.

If we are able to compute Taylor series of the function \(f\) we can
improve the performance of this procedure significantly (see e.g.
\cite{dahne20_comput_tight_enclos_laplac_eigen,
  dahne21_count_to_paynes_nodal_line} where a similar approach is
used). Consider a subinterval \(I_{i}\), instead of computing an
enclosure of \(f(I_{i})\) we compute a Taylor polynomial \(P\) at the
midpoint and an enclosure \(R\) of the remainder term such that
\(f(x) \in P(x) + R\) for \(x \in I_{i}\). We then have
\begin{equation}
  \label{eq:taylor-bound}
  \sup_{x \in I_{i}} f(x) \in \sup_{x \in I_{i}} P(x) + R.
\end{equation}
To compute \(\sup_{x \in I_{i}} P(x)\) we isolate the roots of \(P'\)
on \(I_{i}\) and evaluate \(P\) on the roots as well as the endpoints
of the interval. In practice the computation of \(R\) involves
computing an enclosure of the Taylor series of \(f\) on the full
interval \(I_{i}\). Since this includes the derivative we can as an
extra optimization check if the derivative is non-zero, in which case
\(f\) is monotone and it is enough to evaluate \(f\) on either the
left or the right endpoint of \(I_{i}\), depending on the sign of the
derivative.

The above procedures can easily be adapted to instead compute the
minimum of \(f\) on the interval, joining them together we can thus
compute the extrema on the interval. In some cases we don't care about
computing an enclosure of the maximum, but only to prove that it is
bounded by some value. Instead of using a tolerance we then discard
any subintervals for which the enclosure of the maximum is less than
the bound.

In most cases we bisect the subintervals at the midpoint, meaning that
the interval \([\lo{x}, \hi{x}]\) would be bisected into the two
intervals \([\lo{x}, (\lo{x} + \hi{x}) / 2]\) and
\([(\lo{x} + \hi{x}) / 2, \hi{x}]\). However, when the magnitude of
\(\lo{x}\) and \(\hi{x}\) are very different it can be beneficial to
bisect at the geometric midpoint (see e.g. \cite{GmezSerrano2014}), in
that case we split the interval into
\(\left[\lo{x}, \sqrt{\lo{x}\hi{x}}\right]\) and
\(\left[\sqrt{\lo{x}\hi{x}}, \hi{x}\right]\), where we assume that
\(\lo{x} > 0\).

We split the computation of the bounds for \(n_{0}\), \(\delta_{0}\)
and \(D_{0}\) into three lemmas. The proof of the lemmas are computer
assisted and we give some details about the process. The lemmas are
stated as upper bounds, but in the process of proving them we do
compute actual enclosures of the values.

The code \footnote{Available at
  \url{https://github.com/Joel-Dahne/BurgersHilbertWave.jl}, the
  results in the paper are from commit
  d7863475d1c4e9d8f49da716b4ef2dc936c3dab9.} for the computer assisted
part is implemented in Julia \cite{Julia-2017}. The main tool for the
rigorous numerics is Arb \cite{Johansson2013arb} which we use through
the Julia wrapper Arblib.jl
\footnote{\url{https://github.com/kalmarek/Arblib.jl}}. Many of the
basic interval arithmetic algorithms, such as isolating roots or
enclosing maximum values, are implemented in a separate package,
ArbExtras.jl
\footnote{\url{https://github.com/Joel-Dahne/ArbExtras.jl}}. For
finding the coefficients \(\{a_{j}\}\) and \(\{b_{n}\}\) of \(u_{0}\)
we make use of non-linear solvers from NLsolve.jl
\cite{patrick_kofod_mogensen_2020_4404703}.

The computations were done on an AMD Ryzen 9 5900X processor with 32
GB of RAM using 20 threads and when timings are given it refers to
this configuration. In most cases the computations are multithreaded
and make use of all available threads. The computations were done
using 100 bits of precision.

\begin{lemma}
  \label{lemma:bh-bounds-n}
  The constant \(n_{0}\) satisfies the inequality
  \(n_{0} \leq \bar{n}_{0} = 0.53682\).
\end{lemma}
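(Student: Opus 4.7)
The plan is to enclose $n_{0} = \sup_{x \in [0,\pi]} |N(x)|$ by splitting $[0,\pi]$ into $[0,\epsilon]$ and $[\epsilon,\pi]$ for a small cutoff $\epsilon > 0$ and, on each piece, running the recursive bisection-with-Taylor procedure described at the start of Section~\ref{sec:bounds-for-values}. On $[\epsilon,\pi]$ I would evaluate $N(x) = x\sqrt{\log(1 + 1/x)}/(2u_{0}(x))$ directly through the explicit formula \eqref{eq:u0}, whereas on $[0,\epsilon]$ I would replace $N$ by the factorisation from Section~\ref{sec:bounding-n-delta} so the removable singularity at $x = 0$ is handled analytically before any interval arithmetic is done.

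On $[\epsilon,\pi]$ both numerator and denominator are smooth and bounded away from zero, so Arb can produce rigorous Taylor enclosures of $N$ on any subinterval, and the adaptive bisection turns these into an enclosure of $\sup_{[\epsilon,\pi]}|N|$ of any prescribed width via \eqref{eq:taylor-bound}. The cutoff $\epsilon$ must be chosen small enough that $u_{0}$ does not vanish on $[\epsilon,\pi]$ but large enough that evaluation of \eqref{eq:u0} does not suffer catastrophic cancellation.

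On $[0,\epsilon]$ direct evaluation is hopeless because $x$ and $u_{0}(x)$ both vanish at the origin, so I would use
\begin{equation*}
  N(x) = \frac{\sqrt{\log(1 + 1/x)}}{-2\log x}\cdot\frac{-x\log x}{u_{0}(x)}.
\end{equation*}
The first factor vanishes at $0$ and is monotone increasing on $(0,1)$, so on any sub-subinterval $[\lo{x},\hi{x}] \subset (0,\epsilon]$ it is enclosed just by evaluating the endpoints. The second factor is regular at $0$ with value $\pi$ by Lemma~\ref{lemma:asymptotic-inv-u0}, and the expansion there is a sum of terms of the form $|x|^{-1-\alpha+jp_{\alpha}}/\log|x|$, $|x|^{2m-1}/\log|x|$ and $1/\log|x|$, each of which tends to zero as $x\to 0$. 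Truncating that expansion to finitely many terms and bounding the tail with the Clausen estimates from Section~\ref{sec:clausen-functions} yields an enclosure of $-x\log x/u_{0}(x)$ on each sub-subinterval, which is multiplied into the first factor and passed to the same bisection routine.

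The main obstacle is the behaviour of $u_{0}$ near $x = 0$: the sum $\sum_{j} a_{j}\tilde{C}_{s_{j}}$ contains nearly $N_{0} = 1929$ Clausen functions with orders $s_{j} = 1 - \alpha + jp_{\alpha}$ clustered just above $2$, so a naive interval enclosure of $u_{0}$ suffers massive cancellation and the asymptotic reformulation of Lemma~\ref{lemma:asymptotic-inv-u0} is essential. The bisection tolerance must then be set tight enough that the computed upper bound comfortably beats $0.53682$ yet loose enough that the procedure terminates within the iteration budget; the precise choice of $\epsilon$ is tuned empirically. Carrying out the procedure produces an enclosure of $n_{0}$ whose upper endpoint is at most $\bar{n}_{0} = 0.53682$, which is the asserted inequality.
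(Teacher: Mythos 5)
Your proposal follows essentially the same route as the paper: splitting $[0,\pi]$ at a cutoff $\epsilon$, using the factorisation $N(x) = \frac{\sqrt{\log(1+1/x)}}{-2\log x}\cdot\frac{-x\log x}{u_{0}(x)}$ with Lemma~\ref{lemma:asymptotic-inv-u0} near the origin, and the adaptive Taylor-bisection procedure on $[\epsilon,\pi]$. The only minor difference is that the paper takes $\epsilon = 0.5$ and on $[0,\epsilon]$ merely verifies the bound $N(x)\le N(\pi)$ rather than computing a tight enclosure there, a small optimisation that does not change the argument.
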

\begin{proof}
  A plot of \(N(x)\) on the interval \([0, \pi]\) is given in Figure
  \ref{fig:BH-N} and hints at the maximum being attained at
  \(x = \pi\). A good guess for the maximum value is thus given by
  \(N(\pi)\).

  We take \(\epsilon = 0.5\). For the interval \([0, \epsilon]\) we
  don't compute an enclosure of the maximum but only prove that it is
  bounded by \(N(\pi)\).

  For the interval \([\epsilon, \pi]\) we compute an enclosure of the
  maximum. This gives us
  \begin{equation*}
    n_{0} \in [0.5368150155330973217537 \pm 5.95 \cdot 10^{-23}],
  \end{equation*}
  which is upper bounded by \(\bar{n}_{0}\).

  We are able to compute Taylor expansions of \(N(x)\) in both the
  asymptotic and non-asymptotic case as long as the subinterval
  doesn't contain zero. This allows us to use the better version of
  the algorithm, based on the Taylor polynomial, for enclosing the
  maximum and fall back to the naive version, where no information
  about the derivatives is used, for the subintervals containing zero.
  We use a Taylor expansion of degree \(0\), which is enough to pick
  up the monotonicity after only a few bisections in most cases. The
  runtime is about 5 seconds, most of it for handling the interval
  \([\epsilon, \pi]\).
\end{proof}

\begin{lemma}
  \label{lemma:bh-bounds-delta}
  The constant \(\delta_{0}\) satisfies the inequality
  \(\delta_{0} \leq \bar{\delta}_{0} = 8.4976 \cdot 10^{-4}\).
\end{lemma}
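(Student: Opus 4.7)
The plan is to follow the same template as in Lemma \ref{lemma:bh-bounds-n}: choose a threshold $\epsilon > 0$, split the supremum as
\begin{equation*}
\sup_{x \in [0, \pi]} |F(x)| = \max\left(\sup_{x \in [0, \epsilon]} |F(x)|,\ \sup_{x \in [\epsilon, \pi]} |F(x)|\right),
\end{equation*}
and treat the two pieces by different means. On $[\epsilon, \pi]$ the numerator and denominator of $F$ are separated from their removable singularities, so $F(x)$ can be evaluated rigorously using interval arithmetic on $u_0$ and $\mathcal{H}[u_0]$ (via the series representations from Lemma \ref{lemma:asymptotic-u0} plus truncation bounds from the Clausen tail lemmas). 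I would use the bisection procedure described just after Lemma \ref{lemma:bh-bounds-n}: compute low-order Taylor expansions at each subinterval midpoint, pick up monotonicity to discard non-maximal subintervals, and keep refining until the enclosure is comfortably below $\bar{\delta}_0$.

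On $[0, \epsilon]$ the evaluation has to be asymptotic. I would use the factorisation
\begin{equation*}
F(x) = -\frac{1}{\sqrt{\log(1 + 1/x)}} \cdot \frac{-x\log x}{u_0(x)} \cdot \frac{\mathcal{H}[u_0](x) + \tfrac{1}{2}u_0(x)^2}{x^2\log x}
\end{equation*}
introduced in Section \ref{sec:bounding-n-delta}. The first factor is monotone and tends to zero at $x = 0$; the second is handled by Lemma \ref{lemma:asymptotic-inv-u0}; and the third is handled by the explicit expansion in Lemma \ref{lemma:asymptotic-delta}. All the removable singularities in that expansion (powers $|x|^{-1 - \alpha + jp_\alpha}$ divided by $\log|x|$, and the tail sums $S_1$, $S_2$ divided by $x^2 \log|x|$) are bounded using the techniques of Appendix \ref{sec:removable-singularities}. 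One again applies the Taylor-based bisection algorithm, now working with the asymptotic representation so that enclosures remain tight as $x \to 0$.

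The main obstacle, by far, is the extreme disparity of scales: as noted in the introduction, the maximum of $|F|$ is attained around $x \approx 10^{-5000}$. A uniform midpoint bisection on $[0, \epsilon]$ would require an astronomical number of iterations to reach that region, so I would switch to bisecting at the \emph{geometric} midpoint $\sqrt{\lo{x}\hi{x}}$ whenever $\lo{x}$ and $\hi{x}$ differ by many orders of magnitude. This lets the procedure descend through scales logarithmically rather than linearly. A further subtlety is that the leading cancellation in Lemma \ref{lemma:asymptotic-delta} between the $|x|^2 \log^2|x|$ parts of $\tfrac{1}{2}u_0^2$ and $\mathcal{H}[u_0]$ must be performed symbolically before any interval evaluation; otherwise the resulting enclosures blow up. Provided $\alpha$ and $N_0$ were chosen so that the $1/\log|x|$-coefficient in the expansion is small (which is the whole purpose of the construction in Section \ref{sec:construction}), the remaining terms decay fast enough in $x$ that the entire computation on $[0, \epsilon]$ can be bounded by $\bar{\delta}_0$.

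Combining both pieces gives the enclosure for $\delta_0$, whose upper bound satisfies $\delta_0 \leq \bar{\delta}_0 = 8.4976 \cdot 10^{-4}$. The choice $\epsilon$ is tuned so that the two regimes meet at a value where both methods give tight enclosures; a value comparable to the $\epsilon = 0.5$ used in Lemma \ref{lemma:bh-bounds-n} is a reasonable starting point, although the asymptotic side is expected to dominate the runtime due to the need to bisect down through many decades.
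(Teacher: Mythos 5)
Your proposal correctly identifies the essential structure of the paper's proof: split at a threshold $\epsilon$, use direct interval arithmetic on $[\epsilon, \pi]$, use the three-factor decomposition of $F$ together with Lemmas~\ref{lemma:asymptotic-inv-u0} and~\ref{lemma:asymptotic-delta} on $[0, \epsilon]$, perform the $|x|^2\log^2|x|$ cancellation symbolically, and bisect at the geometric midpoint to descend through the many decades separating $\epsilon$ from the region around $10^{-5000}$ where the extremum of $|F|$ lives. These are all part of the paper's argument (the paper takes $\epsilon = 0.1$, not $0.5$, but you flag $\epsilon$ as tunable).

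However, you omit an optimization that the paper treats as essential rather than cosmetic: the collapsing of negligible terms in the asymptotic expansions. The expansion in Lemma~\ref{lemma:asymptotic-delta} (combined with Lemma~\ref{lemma:asymptotic-inv-u0}) involves on the order of $10^4$ terms, and the paper reports that the interval $[\epsilon_1, \epsilon_2] = [10^{-10000000}, 10^{-100000}]$ alone requires more than $2^{24}$ subintervals. Evaluating the full expansion on each would be computationally infeasible even with geometric bisection. The paper's fix is to choose a cutoff exponent $d_0$ and replace every term $c\,|x|^d$ (or $c\,|x|^d/\log|x|$) with $d \geq d_0$ by a single enclosure proportional to $|x|^{d_0}$; this collapses the expansion to only $4$--$6$ terms on the tiniest intervals while keeping the enclosure tight because $|x|^{d_0}$ is itself negligible there. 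Correspondingly, $[0, \epsilon]$ is split into four subintervals $[0, \epsilon_1]$, $[\epsilon_1, \epsilon_2]$, $[\epsilon_2, \epsilon_3]$, $[\epsilon_3, \epsilon]$ with progressively larger $d_0$ (and hence progressively more retained terms) as $x$ grows. Without this term-coalescing idea, the plan as you state it would likely not terminate in reasonable time, so it deserves to be named explicitly rather than absorbed into ``the asymptotic side is expected to dominate the runtime.''
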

\begin{proof}
  A plot of \(F(x)\) on the interval \([0, \pi]\) is given in Figure
  \ref{fig:BH-F}, however this plot doesn't reveal the full picture of
  what happens close to \(x = 0\). Figures \ref{fig:BH-F-asymptotic-1}
  and \ref{fig:BH-F-asymptotic-2} show a log-plot of \(F(x)\) on the
  intervals \([10^{-100}, 10^{-1}]\) and \([10^{-20000}, 10^{-100}]\)
  respectively. In the latter of these figures we can see that \(F\)
  has an extrema between \(10^{-10000}\) and \(10^{-5000}\), this
  turns out to be where the maximum of \(|F|\) is attained. Note that
  these numbers are extremely small, too small to be represented even
  in standard quadruple precision (\emph{binary128}), though Arb has
  no problem handling them.

  We take \(\epsilon = 0.1\). Handling the interval
  \([\epsilon, \pi]\) is straightforward and we get the enclosure
  \([0.00022669 \pm 3.45 \cdot 10^{-9}]\). The interval
  \([0, \epsilon]\) is more delicate due to the extrema being attained
  for such an extremely small \(x\).

  In general evaluation with the asymptotic version of \(F\) is much
  faster than the non-asymptotic version. However, for the interval
  \([0, \epsilon]\) we have to do a huge number of subdivisions and it
  is therefore beneficial to optimize it slightly more. The terms in
  the expansions used for computing \(F\) are given in Lemmas
  \ref{lemma:asymptotic-inv-u0} and~\ref{lemma:asymptotic-delta}, they
  contain factors of the form \(c|x|^{d}\) and
  \(c\frac{|x|^{d}}{\log|x|}\) with varying coefficients \(c\) and
  exponents \(d\). When \(x\) gets smaller more and more of these
  terms become negligible. To reduce the number of terms in the
  expansions we can collapse all negligible terms into one remainder
  term. If we fix some \(d_{0} > 0\) then on the interval
  \([0, \epsilon]\) with \(\epsilon < 1\) we have for \(d \geq d_{0}\)
  and \(c > 0\)
  \begin{equation*}
    cx^{d} \in [0, c] \cdot x^{d_{0}},\quad
    c \frac{x^{d}}{\log x} \in \left[\frac{c}{\log \epsilon}, 0\right] \cdot x^{d_{0}},
  \end{equation*}
  with a similar expression for \(c < 0\). In this way we can take all
  terms with an exponent greater than \(d_{0}\) and put them into one
  single term with the exponent \(d_{0}\). If \(x\) is small enough so
  that \(x^{d_{0}}\) is negligible this will still give a good
  enclosure.

  For this we split the interval \([0, \epsilon]\) into four parts,
  \([0, \epsilon_{1}]\), \([\epsilon_{1}, \epsilon_{2}]\),
  \([\epsilon_{2}, \epsilon_{3}]\) and \([\epsilon_{4}, \epsilon]\),
  with \(\epsilon_{1} = 10^{-10000000}\),
  \(\epsilon_{2} = 10^{-100000}\) and \(\epsilon_{3} = 10^{-100}\).
  For the first two intervals we take \(d_{0} = 10^{-4}\), on the
  third we take it to be \(1 / 4\) and on the fourth we keep all
  terms. For the expansion from Lemma \ref{lemma:asymptotic-inv-u0}
  this leaves us with \(4\), \(837\) and \(1936\) terms respectively.
  For the expansion from Lemma \ref{lemma:asymptotic-delta} we get
  \(6\), \(2505\) and \(17381\) terms.

  The interval \([0, \epsilon_{1}]\) is taken such that a single
  evaluation of \(F\) gives a good enough enclosure, we have
  \(f([0, \epsilon_{1}]) \subseteq [\pm 2.09 \cdot 10^{-4}]\). The
  bulk of the work is for the second interval, it needs to be split
  into more than \(2^{24}\) subintervals and the final enclosure for
  the maximum is \([0.0003448 \pm 4.98 \cdot 10^{-8}]\). The third
  interval, \([\epsilon_{2}, \epsilon_{3}]\), is split into more than
  \(2^{17}\) subintervals, with the enclosure
  \([0.0008497 \pm 5.37 \cdot 10^{-8}]\). For
  \([\epsilon_{3}, \epsilon]\) it suffices to split it in around
  \(2^{10}\) subintervals and we get the enclosure
  \([0.00038604 \pm 6.73 \cdot 10^{-9}]\).

  We are able to compute Taylor expansions of \(F(x)\) in both the
  asymptotic and non-asymptotic case as long as the subinterval
  doesn't contain zero, this allows us to use the better version of
  the algorithm for enclosing the maximum and fall back to the naive
  version for the subintervals containing zero. We use a Taylor
  expansion of degree \(4\) in all cases. On the intervals
  \([\epsilon_{1}, \epsilon_{2}]\), \([\epsilon_{2}, \epsilon_{3}]\)
  and \([\epsilon_{3}, \epsilon]\) we bisect at the geometric midpoint
  instead of bisecting at the arithmetic midpoint. The runtime is
  about 15 minutes.
\end{proof}

\begin{lemma}
  \label{lemma:bh-bounds-D}
  The constant \(D_{0}\) satisfies the inequality
  \(D_{0} \leq \bar{D}_{0} = 0.94589\).
\end{lemma}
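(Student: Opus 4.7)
The plan is to mirror the structure of the proofs of Lemmas \ref{lemma:bh-bounds-n} and \ref{lemma:bh-bounds-delta}: split $[0,\pi]$ into an asymptotic piece $[0,\epsilon]$ and a non-asymptotic piece $[\epsilon,\pi]$, and on each piece run the adaptive bisection-with-discarding procedure described at the start of this section to enclose $\sup |\mathcal{T}(x)|$. Then take the maximum of the two enclosures and check that it is at most $\bar{D}_0 = 0.94589$.

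On $[\epsilon,\pi]$ I would evaluate $\mathcal{T}(x)$ directly through
\[
\mathcal{T}(x) = \frac{1}{u_0(x)}\left(\frac{x(U_{1,1}(x)+U_{1,2}(x))}{\pi\sqrt{\log(1+1/x)}} + \frac{U_2(x)}{\pi x \sqrt{\log(1+1/x)}}\right),
\]
using the integral representations from Section \ref{sec:analysis-T} together with the rigorous quadrature of Appendix \ref{sec:rigorous-integration}. The split at the sign change $r_x$ of $\hat I(x,\cdot)$ is justified by Lemmas \ref{lemma:I-0-x} and \ref{lemma:I-x-pi}; on an interval $\inter{x}\subset[\epsilon,\pi]$ an enclosure of $r_x$ can be obtained by an interval Newton step in the strip $(1/2,1/\sqrt{2})$ provided by Lemma \ref{lemma:I-0-x}, using the monotonicity of $\hat I$ in $t$. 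On $[0,\epsilon]$ I would instead use the factored form
\[
\mathcal{T}(x) = \frac{1}{\pi}\cdot\frac{-x\log x}{u_0(x)} \cdot \left(\frac{U_1(x) + U_2(x)}{-x^2\log x\sqrt{\log(1+1/x)}}\right),
\]
bounding the first factor by Lemma \ref{lemma:asymptotic-inv-u0} and the bracket by the explicit upper bounds of Lemmas \ref{lemma:asymptotic-U1} and \ref{lemma:asymptotic-U2}. This requires first computing enclosures of the auxiliary constants $c_1, c_2, R_1, R_2, D_1$: $R_2$ by direct Taylor arithmetic, $R_1$ and $D_1$ by the removable-singularity technique of Appendix \ref{sec:removable-singularities}, and $c_1, c_2$ by the rigorous integration of Appendix \ref{sec:rigorous-integration}. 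Each of the individual terms in the resulting upper bound is a concrete function of $x$, and its maximum over $[0,\epsilon]$ can be enclosed either by monotonicity (as in the worked example for $\tfrac{2}{3}\log(1/(2x))^{3/2}/(\log(1/x)\sqrt{\log(1+1/x)})$) or by the bisection procedure with Taylor enclosures on subintervals not touching $0$ and a single naive evaluation on the subinterval containing $0$.

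The main obstacle will be the rigorous evaluation of $U(x)$ on $[\epsilon,\pi]$, not the asymptotic analysis. The integrand of $U(x)$ has a logarithmic singularity at $y=x$ and the domain of integration depends on $x$ through both the endpoint and the separating root $r_x$; for an input interval $\inter{x}$ this forces the quadrature to be carried out piecewise with the singular piece handled analytically (using the $\log\sin \approx \log$ split that underlies the expansions of $I$ and $\hat I$ in Lemmas \ref{lemma:asymptotic-U1} and \ref{lemma:asymptotic-U2}) and the remaining smooth pieces handled by Gauss--Legendre or Taylor-based quadrature on subintervals. Getting tight enough enclosures of $U(x)$ to resolve $\mathcal{T}(x)$ to within a few units in the fourth decimal place, uniformly over a fine subdivision of $[\epsilon,\pi]$, is what drives the cost of the proof.

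Finally, since $\mathcal{T}$ is smooth on $(0,\pi]$ and, unlike $F$, is not expected to exhibit an extremum at an astronomically small scale, the bisection on $[0,\epsilon]$ should terminate at modest depth once $\epsilon$ is chosen so that the asymptotic bound is comfortably below $\bar{D}_0$. I would pick $\epsilon$ of the order of $10^{-1}$ or smaller, verify by a first non-rigorous evaluation that the maximum is indeed strictly less than $\bar{D}_0$ (and occurs in the interior, not near the endpoints), and then set the tolerance in the bisection to leave a safe margin below $\bar{D}_0$ before running the rigorous computation.
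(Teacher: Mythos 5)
Your proposal follows essentially the same route as the paper: split $[0,\pi]$ at a small $\epsilon$ (the paper uses $\epsilon=0.01$), evaluate $\mathcal{T}$ directly on $[\epsilon,\pi]$ via the $U_{1,1}, U_{1,2}, U_2$ decomposition with rigorous quadrature, and on $[0,\epsilon]$ use Lemma \ref{lemma:asymptotic-inv-u0} for $-x\log x/u_0(x)$ together with the explicit upper bounds of Lemmas \ref{lemma:asymptotic-U1} and \ref{lemma:asymptotic-U2}. You correctly identify that the cost is dominated by the quadrature for $U$ on $[\epsilon,\pi]$, and that the asymptotic bounds on $[0,\epsilon]$ give only one-sided bounds (so the enclosure of $D_0$ ultimately comes from the $[\epsilon,\pi]$ piece, with the asymptotic piece just certified to lie below it).

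One small discrepancy: you do not note that on $[\epsilon,\pi]$ Taylor series of $\mathcal{T}$ itself are unavailable (since $U$ is defined through integrals for which the code cannot produce Taylor jets), so the bisection there must fall back to the naive, derivative-free version; the paper states this explicitly and only recovers some efficiency by replacing the interval value of $u_0$ with a $C^1$-tightened enclosure. This does not invalidate your plan, but it is the reason the $[\epsilon,\pi]$ piece dominates the runtime.
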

\begin{proof}
  A plot of \(\mathcal{T}(x)\) on the interval \([0, \pi]\) is given
  in Figure \ref{fig:BH-T}. It hints at the maximum being attained
  around \(x \approx 2.3\).

  We take \(\epsilon = 0.01\) and start by computing the maximum on
  \([\epsilon, \pi]\), this gives us the enclosure
  \([0.945126, 0.94589]\).

  For the interval \([0, \epsilon]\) we make use of the asymptotic
  expansions from Lemmas \ref{lemma:asymptotic-U1} and
  \ref{lemma:asymptotic-U2}. Since these only give upper bounds we are
  not able to compute an enclosure of \(\mathcal{T}\) on this
  interval. However, since the maximum is attained on the interval
  \([\epsilon, \pi]\) we only need prove that \(\mathcal{T}\) is
  bounded by this value on \([0, \epsilon]\). Hence the maximum for
  the full interval is given by that on \([\epsilon, \pi]\) and we get
  \begin{equation*}
    D_{0} \in [0.945126, 0.94589],
  \end{equation*}
  which is upper bounded by \(\bar{D}_{0}\).

  In this case we do not have access to Taylor series of
  \(\mathcal{T}(x)\) in either the asymptotic or non-asymptotic case.
  This means we have to rely on the naive version for bounding the
  maximum. On the interval \([\epsilon, \pi]\) there is one
  optimization that we can do. \(\mathcal{T}(x)\) involves a division
  by \(u_{0}(x)\) and for this function we have access to Taylor
  series, we therefore compute a tighter enclosure of this using a
  \(C^{1}\) bound.

  The total runtime for the computation is around 90 seconds, the
  majority for handling the interval \([\epsilon, \pi]\).
\end{proof}

\begin{figure}
  \centering
  \begin{subfigure}[t]{0.45\textwidth}
    \includegraphics[width=\textwidth]{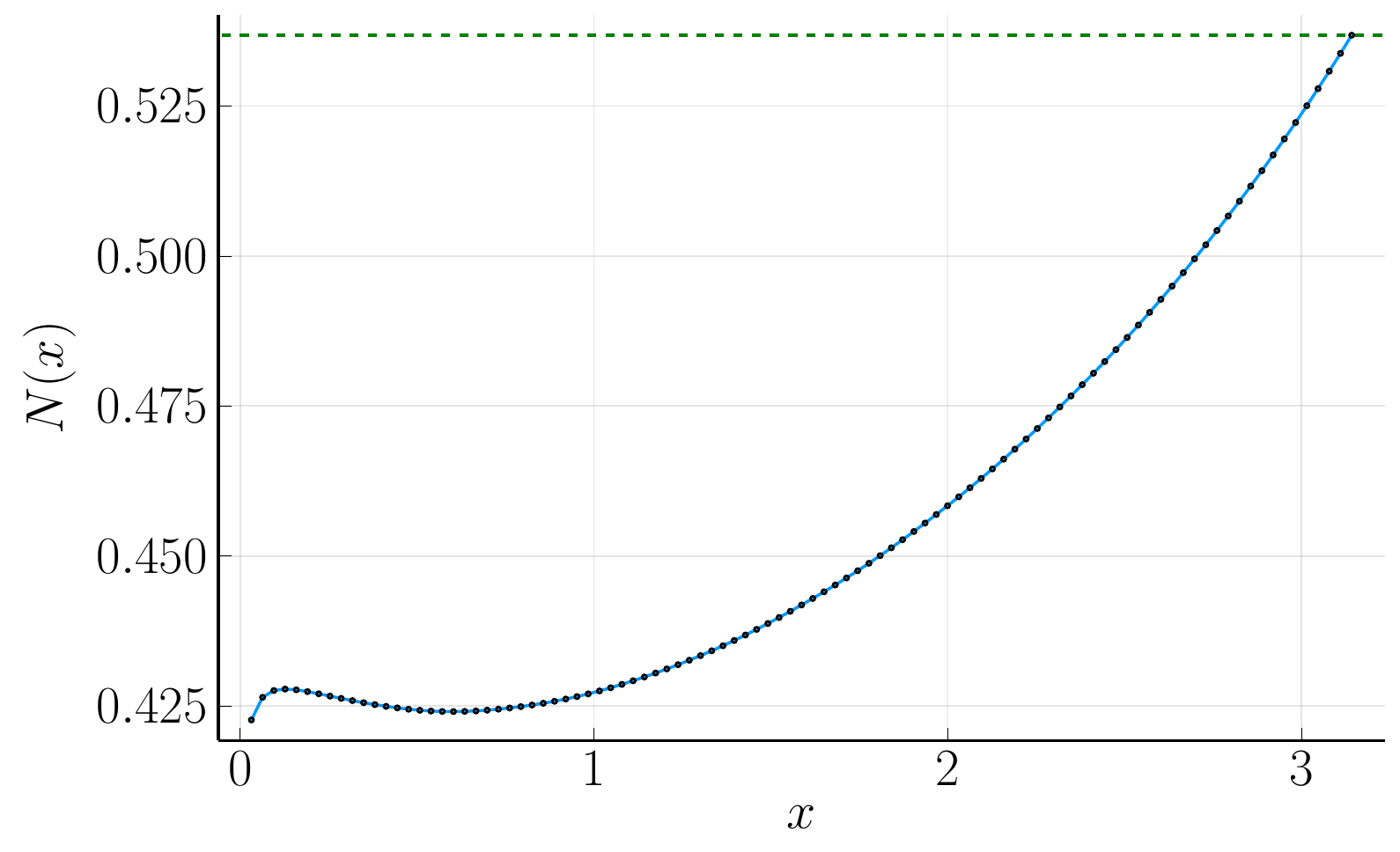}
    \caption{}
    \label{fig:BH-N}
  \end{subfigure}
  \begin{subfigure}[t]{0.45\textwidth}
    \includegraphics[width=\textwidth]{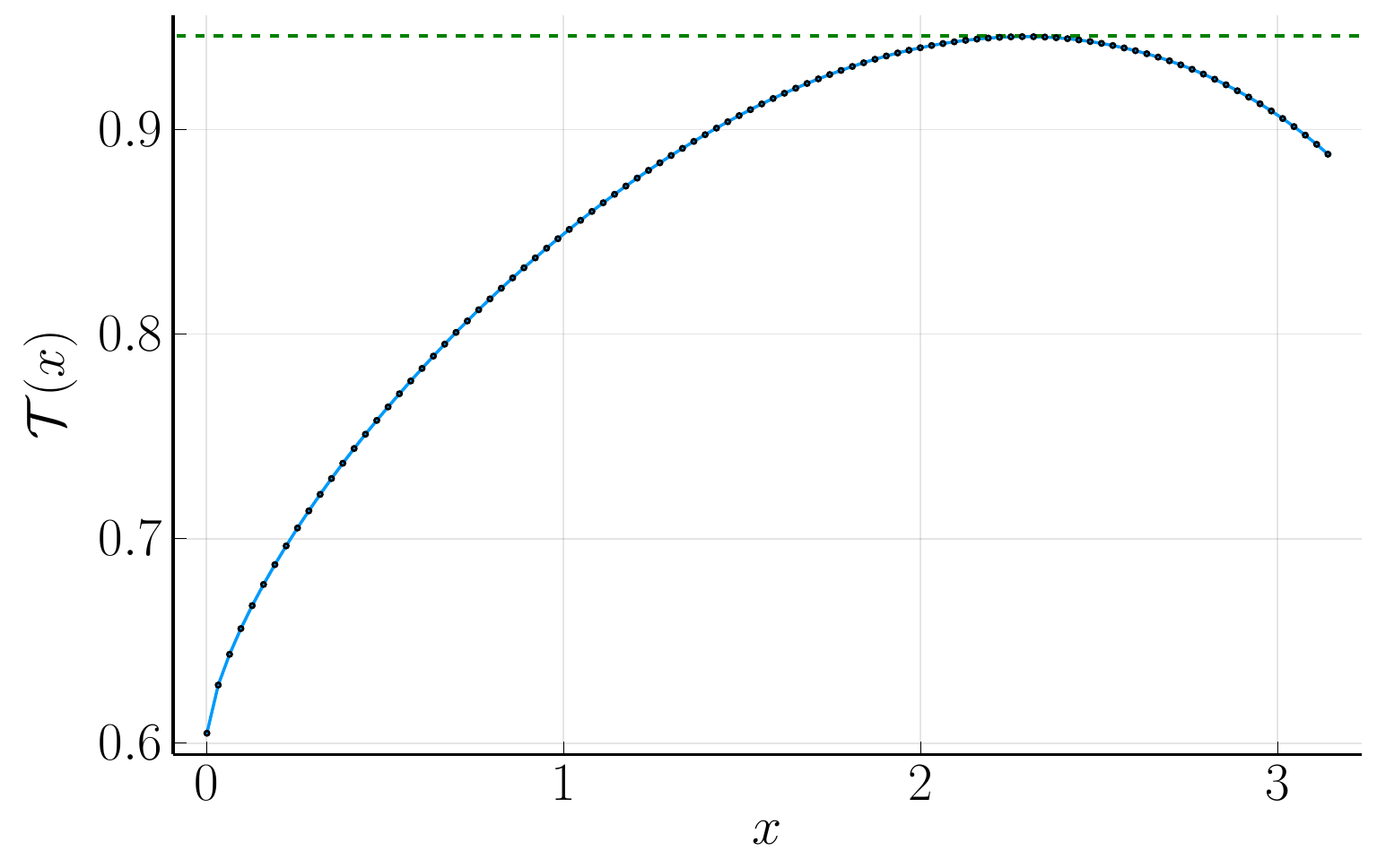}
    \caption{}
    \label{fig:BH-T}
  \end{subfigure}
  \caption{Plot of the functions \(N\) and \(\mathcal{T}\) on the
    interval \([0, \pi]\). The dashed green lines show the upper
    bounds \(\bar{n}_{0}\) and \(\bar{D}_{0}\) as given in Lemmas
    \ref{lemma:bh-bounds-n} and \ref{lemma:bh-bounds-D}}
\end{figure}

\begin{figure}
  \centering
  \begin{subfigure}[t]{0.3\textwidth}
    \includegraphics[width=\textwidth]{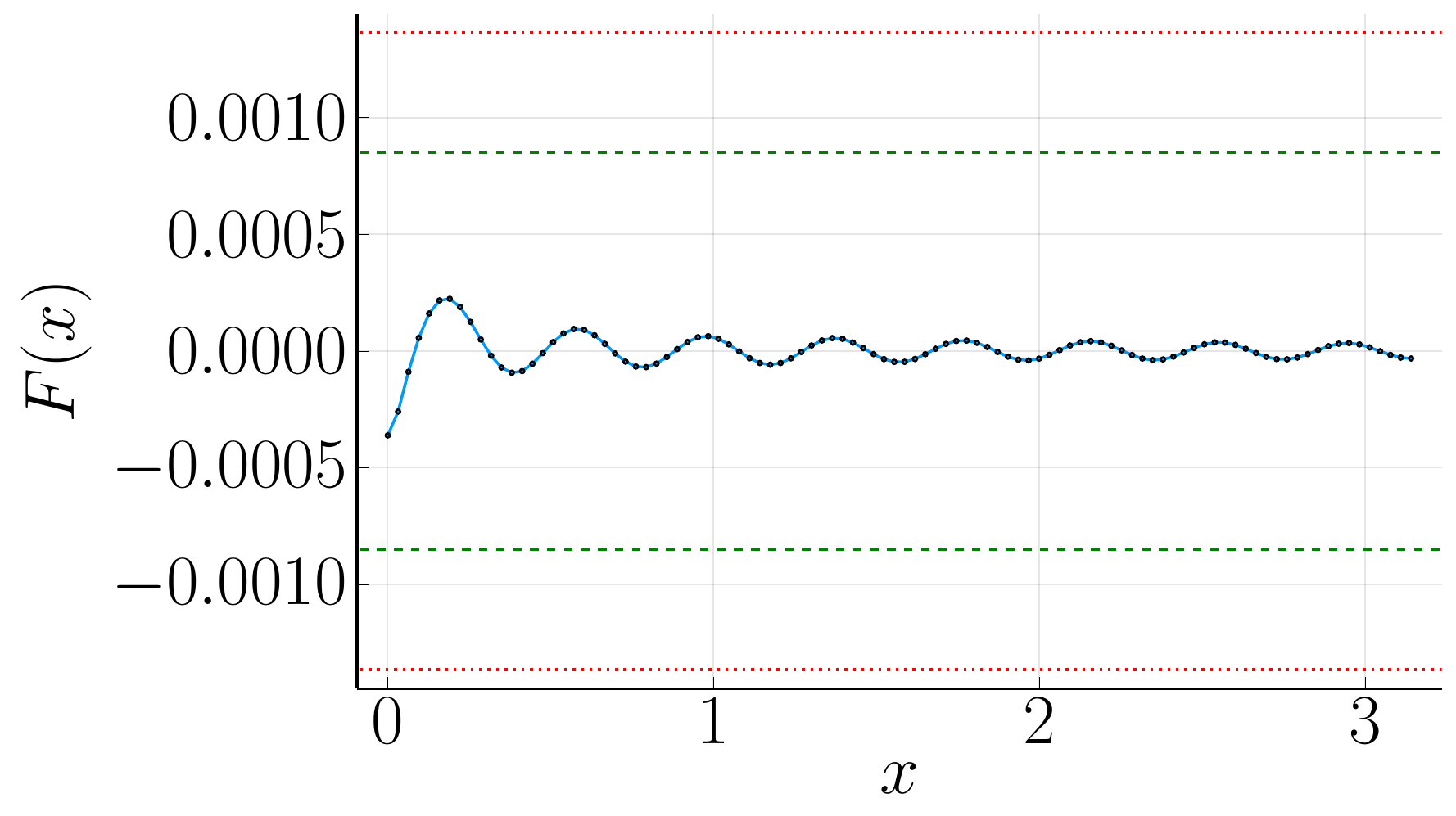}
    \caption{}
    \label{fig:BH-F}
  \end{subfigure}
  \begin{subfigure}[t]{0.3\textwidth}
    \includegraphics[width=\textwidth]{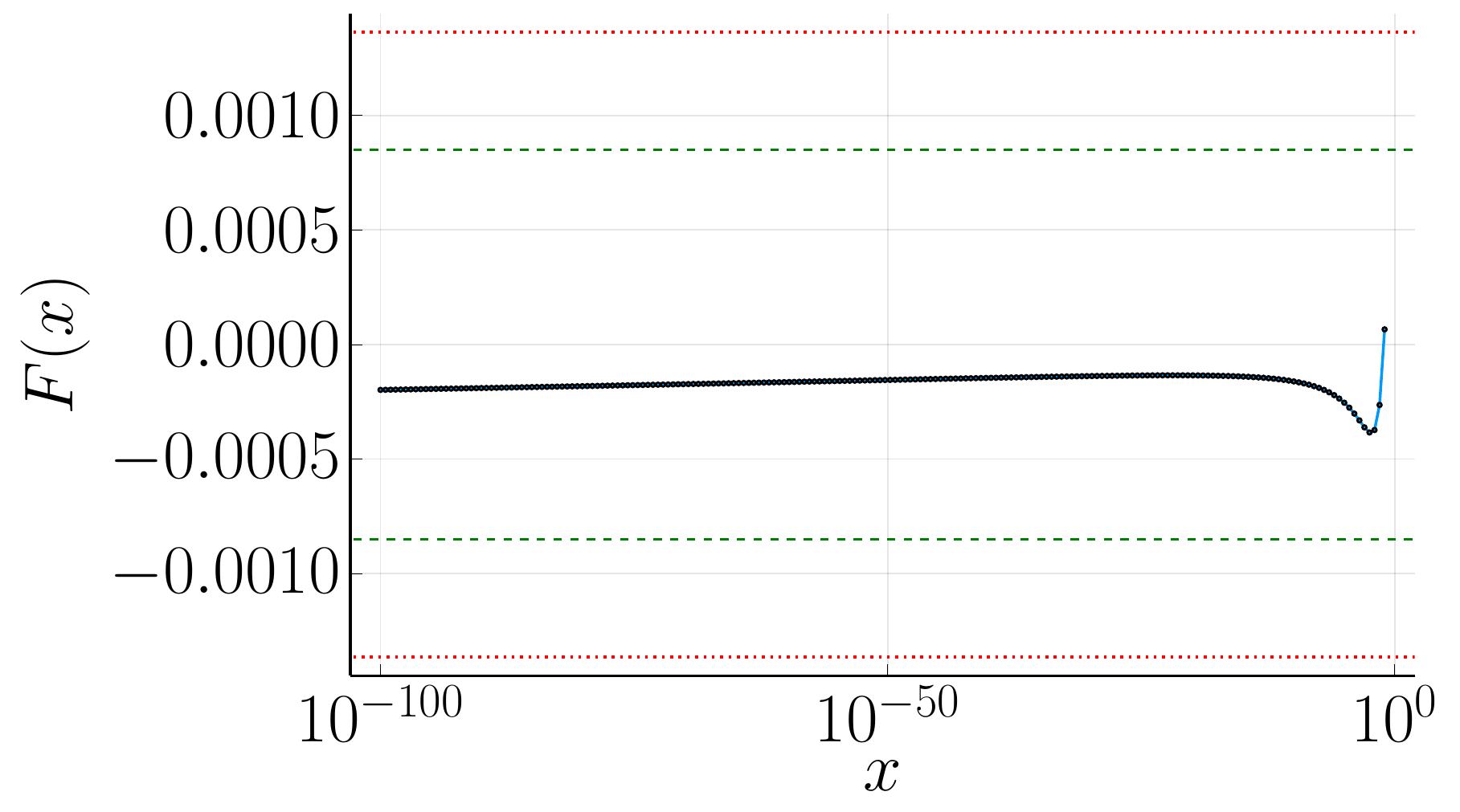}
    \caption{}
    \label{fig:BH-F-asymptotic-1}
  \end{subfigure}
  \begin{subfigure}[t]{0.3\textwidth}
    \includegraphics[width=\textwidth]{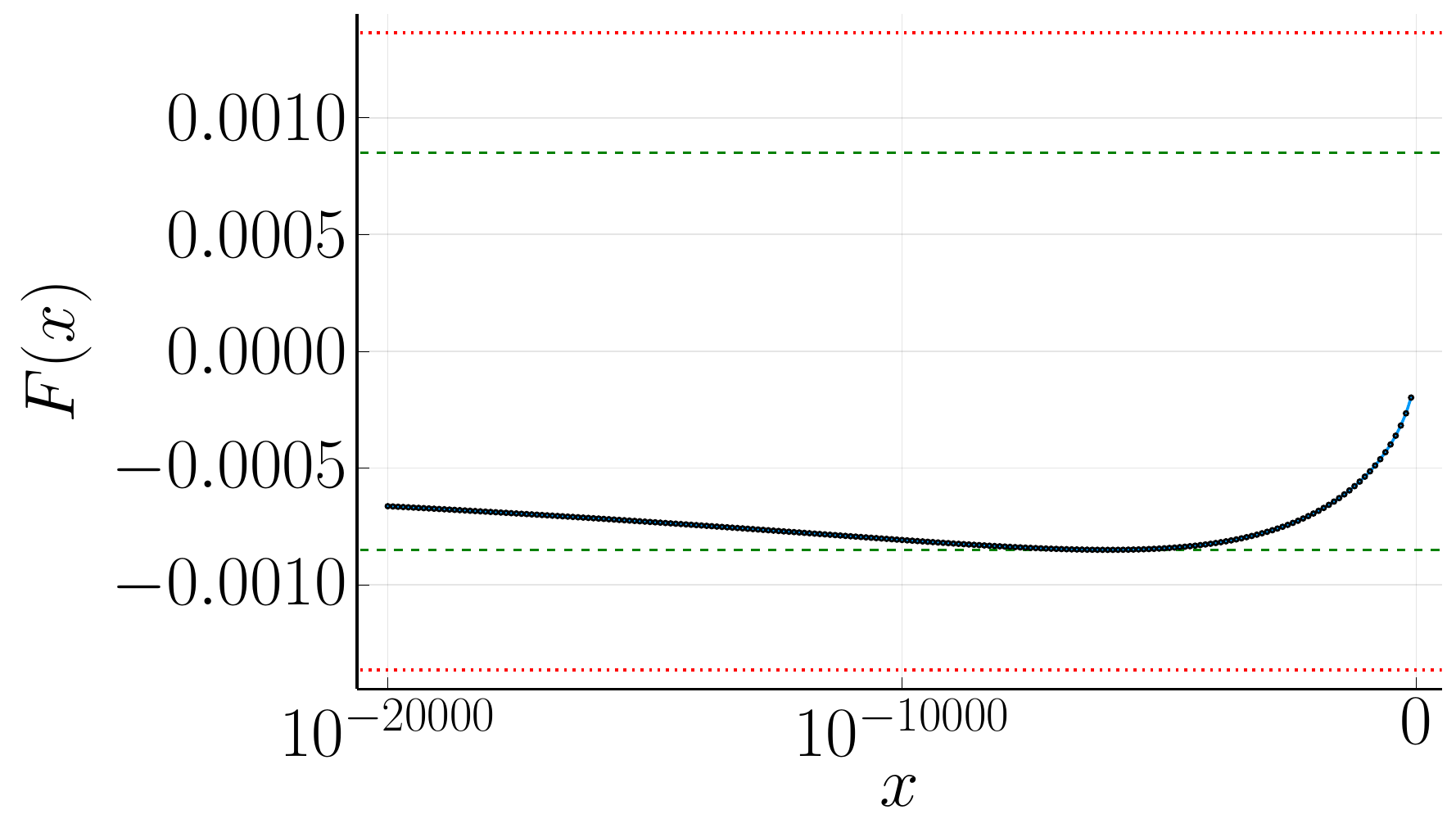}
    \caption{}
    \label{fig:BH-F-asymptotic-2}
  \end{subfigure}
  \caption{Plot of the function \(F\) on the intervals \([0, \pi]\),
    \([10^{-100}, 10^{-1}]\) and \([10^{-20000}, 10^{-100}]\). The
    dashed green line shows the upper bound \(\bar{\delta}_{0}\) as
    given in Lemma \ref{lemma:bh-bounds-delta}. The dotted red line
    shows \(\frac{(1 - \bar{D}_{0})^{2}}{4\bar{n}_{0}}\), which is the
    value we want the defect to be smaller than.}
\end{figure}

\section{Proof of Theorem \ref{thm:main}}
\label{sec:proof-main-theorem}
We are now ready to give the proof of Theorem \ref{thm:main}.

\begin{proof}[Proof of Theorem \ref{thm:main}]
  Consider the operator \(G\) from \eqref{eq:G} given by
  \begin{equation*}
    G[v] = (I - T)^{-1}(-F - Nv^{2}).
  \end{equation*}
  By Lemma \ref{lemma:bh-bounds-D} we have
  \(\|T\| \leq \bar{D}_{0} < 1\) so the inverse of the operator
  \(I - T\) is well defined. Combining Lemmas \ref{lemma:bh-bounds-n},
  \ref{lemma:bh-bounds-delta} and \ref{lemma:bh-bounds-D} gives us the
  inequality
  \begin{equation*}
    \delta_{0} \leq \bar{\delta}_{0} < \frac{(1 - \bar{D}_{0})^{2}}{4\bar{n}_{0}} \leq \frac{(1 - D_{0})^{2}}{4n_{0}}.
  \end{equation*}
  This, together with Proposition \ref{prop:contraction} and Banach
  fixed-point theorem proves that for
  \begin{equation*}
    \epsilon = \frac{1 - D_{0} - \sqrt{(1 - D_{0})^{2} - 4\delta_{0}n_{0}}}{2n_{0}}
  \end{equation*}
  the operator \(G\) has a unique fixed-point \(v_{0}\) in
  \(X_{\epsilon} \subseteq L^{\infty}(\mathbb{T})\).

  By the construction of the operator \(G\) this means that the function
  \begin{equation*}
    u(x) = u_{0}(x) + w(x)v_{0}(x)
  \end{equation*}
  solves \eqref{eq:main}, given by
  \begin{equation*}
    \frac{1}{2}u^{2} = -\HopGeneric[u].
  \end{equation*}
  For any wavespeed \(c \in \mathbb{R}\) we then have that the
  function
  \begin{equation*}
    \varphi(x) = c - u(x)
  \end{equation*}
  is a traveling wave solution to \eqref{eq:BH}. This proves the
  existence of a \(2\pi\)-periodic highest cusped traveling wave.

  To get the asymptotic behaviour we note that
  \begin{equation*}
    u_{0}(x) = -\frac{1}{\pi}|x|\log|x| + \mathcal{O}(|x|)
    \quad\text{and}\quad
    w(x)v_{0}(x) = \mathcal{O}(|x|\sqrt{\log(|x|)}).
  \end{equation*}
  Hence
  \begin{equation*}
    u(x) = -\frac{1}{\pi}|x|\log|x| + \mathcal{O}(|x|\sqrt{\log(|x|)})
  \end{equation*}
  and
  \begin{equation*}
    \varphi(x) = c + \frac{1}{\pi}|x|\log|x| + \mathcal{O}(|x|\sqrt{\log(|x|)}),
  \end{equation*}
  as we wanted to show.
\end{proof}

\appendix

\section{Removable singularities}
\label{sec:removable-singularities}

In several cases we have to compute enclosures of functions with
removable singularities. For example the function
\begin{equation*}
  \Gamma(1 - s)\cos(\pi(1 - s) / 2)
\end{equation*}
comes up when computing \(C_{s}\) through equation
\eqref{eq:clausenc-periodic-zeta} and has a removable singularity
whenever \(s\) is a positive even integer. In this appendix we explain
how to compute rigorous enclosures at and around these points. For
this we need a way to handle the removable singularity. Let
\begin{equation*}
  f_{n}(x) = \frac{f^{(n)}(x)}{n!}.
\end{equation*}
We have the following lemma for handling functions with removable
singularities:
\begin{lemma}
  Let \(m \in \mathbb{Z}_{\geq 0}\) and let \(I\) be an interval
  containing zero. Consider a function \(f(x)\) with a zero of order
  \(n\) at \(x = 0\) and such that \(f^{(m + n)}(x)\) is absolutely
  continuous on \(I\). Then for all \(x \in I\) we have
  \begin{equation*}
    \frac{f(x)}{x^{n}} = \sum_{k = 0}^{m}f_{k + n}(0)x^{k} + f_{m + n + 1}(\xi)x^{m + 1}
  \end{equation*}
  for some \(\xi\) between \(0\) and \(x\). Furthermore, if
  \(f^{m + n + p}(x)\) is absolutely continuous for
  \(p \in \mathbb{Z}_{\geq 0}\) we have
  \begin{equation*}
    \frac{d^{p}}{dx^{p}}\frac{f(x)}{x^{n}}
    = \sum_{k = 0}^{m}\frac{(k + p)!}{k!}f_{k + n + p}(0)x^{k}
    + \frac{(m + p + 1)!}{(m + 1)!}f_{m + n + p + 1}(\xi)x^{m + 1}
  \end{equation*}
  for some \(\xi\) between \(0\) and \(x\).
\end{lemma}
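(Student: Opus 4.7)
The plan is to reduce both identities to applications of Taylor's theorem, with the second requiring an intermediate smoothness argument for the quotient \(g(x) := f(x)/x^{n}\).

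For the first identity, since \(f\) has a zero of order \(n\) at \(0\) one has \(f_{k}(0)=0\) for \(0\le k\le n-1\). Applying Taylor's theorem with Lagrange remainder to \(f\) at \(0\) to order \(m+n+1\) (justified by absolute continuity of \(f^{(m+n)}\)) and using the vanishing of the low-order terms gives
\begin{equation*}
  f(x) = \sum_{k=n}^{m+n} f_{k}(0)\,x^{k} + f_{m+n+1}(\xi)\,x^{m+n+1}
\end{equation*}
for some \(\xi\) between \(0\) and \(x\). Dividing by \(x^{n}\) and reindexing \(k\mapsto k+n\) yields the stated expansion.

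For the second identity I first make \(g\) manifestly smooth. The integral form of Taylor's remainder at order \(n\), combined with the vanishing of the low-order derivatives of \(f\), gives
\begin{equation*}
  g(x) = \frac{1}{(n-1)!}\int_{0}^{1} (1-t)^{n-1} f^{(n)}(tx)\,dt.
\end{equation*}
Differentiation under the integral sign (valid \(j\) times for \(0\le j\le m+p+1\) under the standing hypothesis) yields \(g^{(j)}(x) = \tfrac{1}{(n-1)!}\int_{0}^{1} (1-t)^{n-1} t^{j} f^{(n+j)}(tx)\,dt\), and the Beta integral \(\int_{0}^{1}(1-t)^{n-1}t^{j}\,dt = j!\,(n-1)!/(n+j)!\) then identifies \(g^{(j)}(0) = j!\,f_{n+j}(0)\). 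Taylor's theorem applied to \(g^{(p)}\) at \(0\) to order \(m+1\) therefore produces
\begin{equation*}
  g^{(p)}(x) = \sum_{k=0}^{m} \frac{(p+k)!}{k!} f_{n+p+k}(0)\,x^{k} + \frac{g^{(p+m+1)}(\eta)}{(m+1)!}\,x^{m+1}
\end{equation*}
for some \(\eta\) between \(0\) and \(x\), already matching the main term in the lemma.

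The main obstacle is converting the residual \(g^{(p+m+1)}(\eta)/(m+1)!\) into the form \(\frac{(m+p+1)!}{(m+1)!}\,f_{m+n+p+1}(\xi)\) with \(\xi\) between \(0\) and \(x\). Inserting \(\eta\) into the integral representation of \(g^{(p+m+1)}\) and applying the mean value theorem for integrals against the non-negative weight \((1-t)^{n-1}t^{p+m+1}\) produces \(f^{(n+p+m+1)}(\xi)\) for some \(\xi\) between \(0\) and \(\eta\) (hence between \(0\) and \(x\)) times the Beta integral \((p+m+1)!\,(n-1)!/(n+p+m+1)!\); collecting constants and using \(f^{(n+p+m+1)}(\xi)/(n+p+m+1)! = f_{m+n+p+1}(\xi)\) yields exactly the stated coefficient. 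The mean-value step only requires that \(f^{(n+p+m+1)}\) have the intermediate value property on \([0,x]\), which is guaranteed by the absolute continuity hypothesis on \(f^{(m+n+p)}\).
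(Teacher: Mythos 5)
Your proof is correct and takes a genuinely different route from the paper's. The paper attacks $\frac{d^{p}}{dx^{p}}\big(f(x)x^{-n}\big)$ head-on via the Leibniz rule, then Taylor-expands each $f^{(p-l)}$ with integral remainder and simplifies the resulting double sums using ${}_{2}F_{1}$ evaluation identities from the DLMF together with a positivity result for Jacobi-type hypergeometric polynomials to control the sign of the weight in the remainder integral. Your approach sidesteps all of this combinatorics by representing the quotient once and for all as $g(x)=\tfrac{1}{(n-1)!}\int_{0}^{1}(1-t)^{n-1}f^{(n)}(tx)\,dt$, differentiating under the integral sign to get the clean formula $g^{(j)}(x)=\tfrac{1}{(n-1)!}\int_{0}^{1}(1-t)^{n-1}t^{j}f^{(n+j)}(tx)\,dt$ whose positivity of weight is automatic, reading off $g^{(j)}(0)=j!\,f_{n+j}(0)$ from the Beta integral, and applying ordinary Taylor's theorem to $g^{(p)}$; the remainder then matches after one more Beta integral. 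This is conceptually simpler and makes the positivity of the remainder weight transparent rather than requiring the ${}_{2}F_{1}$ root-location lemma. One small caveat: your closing sentence asserts that absolute continuity of $f^{(m+n+p)}$ guarantees the intermediate value property of $f^{(m+n+p+1)}$, which is not true in general (absolute continuity gives an a.e.-defined $L^{1}$ derivative, not a Darboux function); however, the paper's own proof invokes the same mean-value-for-integrals step and thus silently relies on the same regularity, so this is an inherited imprecision of the lemma's hypotheses rather than a gap specific to your argument, and it is harmless for the smooth $f$ to which the lemma is actually applied.
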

\begin{proof}
  The first statement follows directly from expanding \(f\) in a
  Taylor series with a remainder term on Lagrange form and dividing by
  \(x^{n}\), using that \(f_{k}(0) = 0\) for \(k < n\).

  For the second statement we start by noting that
  \begin{equation}
    \label{eq:derivate-sum}
    \frac{d^{p}}{dx^{p}} \frac{f(x)}{x^{n}}
    = \sum_{l = 0}^{p}\binom{p}{l}f^{(p - l)}(x)(-1)^{l}\frac{(n + l - 1)!}{(n - 1)!}\frac{1}{x^{n + l}}
    = \frac{1}{x^{n + p}}\sum_{l = 0}^{p}(-1)^{l}\binom{p}{l}\frac{(n + l - 1)!}{(n - 1)!}f^{(p - l)}(x)x^{p - l}.
  \end{equation}
  The Taylor expansion of \(f^{(p - l)}(x)\) with the remainder term
  in integral form is
  \begin{equation}
    \label{eq:f-taylor-integral}
    f^{(p - l)}(x) = \sum_{k = 0}^{m + n + l}(f^{(p - l)})_{k}(0)x^{k} + x^{m + n + l + 1}(m + n + l + 1)\int_{0}^{1} (f^{(p - l)})_{m + n + l + 1}(tx)(1 - t)^{m + n + l}\ dt.
  \end{equation}
  Using that
  \begin{equation*}
    (f^{(p - l)})_{k}(x) = \frac{f^{(p - l + k)}(x)}{k!} = \frac{(p - l + k)!}{k!}\frac{f^{(p - l + k)}(x)}{(p - l + k)!} = \frac{(p - l + k)!}{k!}f_{p - l + k}(x)
  \end{equation*}
  we can write \eqref{eq:f-taylor-integral} as
  \begin{align*}
    f^{(p - l)}(x) =& \sum_{k = 0}^{m + n + l}\frac{(p - l + k)!}{k!}f_{p - l + k}(0)x^{k}\\
                    &+ x^{m + n + l + 1}(m + l + n + 1)\int_{0}^{1} \frac{(p - l + (m + n + l + 1))!}{(m + n + l + 1)!}f_{p - l + (m + n + l + 1)}(tx)(1 - t)^{m + n + l}\ dt\\
    =& \sum_{k = 0}^{m + n + l}\frac{(p - l + k)!}{k!}f_{p - l + k}(0)x^{k} + x^{m + n + l + 1}\frac{(p + m + n + 1)!}{(m + n + l)!}\int_{0}^{1} f_{p + m + n + 1}(tx)(1 - t)^{m + n + l}\ dt.
  \end{align*}
  Multiplying by \(x^{p - l}\) we get
  \begin{equation}
    \label{eq:derivative-expansion}
    f^{(p - l)}(x)x^{p - l} = \sum_{k = p - l}^{m + n + p}\frac{k!}{(k - p + l)!}f_{k}(0)x^{k} + x^{m + n + p + 1}\frac{(p + m + n + 1)!}{(m + n + l)!}\int_{0}^{1} f_{p + m + n + 1}(tx)(1 - t)^{m + n + l}\ dt.
  \end{equation}
  Inserting the main term of \eqref{eq:derivative-expansion} into
  \eqref{eq:derivate-sum} gives us
  \begin{equation*}
    \frac{1}{x^{n + p}}\sum_{l = 0}^{p}(-1)^{l}\binom{p}{l}\frac{(n + l - 1)!}{(n - 1)!}\sum_{k = p - l}^{m + n + p}\frac{k!}{(k - p + l)!}f_{k}(0)x^{k}
  \end{equation*}
  Splitting into \(k < p\) and \(k \geq p\) we have
  \begin{multline}
    \label{eq:main-term-split}
    \sum_{k = 0}^{p - 1}\sum_{l = p-k}^{p}(-1)^{l}\binom{p}{l}\frac{(n + l - 1)!}{(n - 1)!}\frac{k!}{(k - p + l)!}f_{k}(0)x^{k - n - p}\\
    + \sum_{k = p}^{m + n + p}\sum_{l = 0}^{p}(-1)^{l}\binom{p}{l}\frac{(n + l - 1)!}{(n - 1)!}\frac{k!}{(k - p + l)!}f_{k}(0)x^{k - n - p}
  \end{multline}
  Since \(f_{k}(0) = 0\) for \(k < n\) the first sum reduces to
  \begin{equation*}
    \sum_{k = n}^{p - 1}\sum_{l = p-k}^{p}(-1)^{l}\binom{p}{l}\frac{(n + l - 1)!}{(n - 1)!}\frac{k!}{(k - p + l)!}f_{k}(0)x^{k - n - p}
  \end{equation*}
  Using \cite[Sec.~15.2.4]{NIST:DLMF} we have
  \begin{equation*}
    \sum_{l = p-k}^{p}(-1)^{l}\binom{p}{l}\frac{(n + l - 1)!}{(n - 1)!}\frac{k!}{(k - p + l)!}
    = (-1)^{p - k}\binom{p}{p - k}\frac{k!(n + p - k - 1)!}{(n - 1)!}{}_{2}F_{1}(-k, n + p - k, p - k + 1, 1).
  \end{equation*}
  From \cite[Sec.~15.4.24]{NIST:DLMF} we get that this is zero for
  \(k \geq n\). The second sum in \eqref{eq:main-term-split} we can
  rewrite as
  \begin{equation*}
    \sum_{k = -n}^{m}\sum_{l = 0}^{p}(-1)^{l}\binom{p}{l}\frac{(n + l - 1)!}{(n - 1)!}\frac{(k + n + p)!}{(k + n + l)!}f_{k + n + p}(0)x^{k}
  \end{equation*}
  For \(k < -p\) the terms are zero since \(f_{k + n + p}(0) = 0\).
  For \(k \geq -p\) we use \cite[Sec.~15.2.4]{NIST:DLMF} to write the
  inner sum as
  \begin{equation*}
    \sum_{l = 0}^{p}(-1)^{l}\binom{p}{l}\frac{(n + l - 1)!}{(n - 1)!}\frac{(k + n + p)!}{(k + n + l)!}
    = \frac{(k + n + p)!}{(k + n)!}{}_{2}F_{1}(-p, n, k + n + 1, 1).
  \end{equation*}
  From \cite[Sec.~15.4.24]{NIST:DLMF} we get that this is zero for
  \(-p \leq k < 0\) and for \(0 \leq k \leq m\) it is given by
  \(\frac{(k + p)!}{k!}\). This gives us that
  \eqref{eq:main-term-split} can be written as
  \begin{equation*}
    \sum_{k = 0}^{m}\frac{(k + p)!}{k!}f_{k + n + p}(0)x^{k},
  \end{equation*}
  which is what we wanted to show.

  We are now interested in the remainder term of
  \eqref{eq:derivative-expansion} when inserted into
  \eqref{eq:derivate-sum}, we get
  \begin{multline*}
    \frac{1}{x^{n + p}}\sum_{l = 0}^{p}(-1)^{l}\binom{p}{l}\frac{(n + l - 1)!}{(n - 1)!}x^{m + n + p + 1}\frac{(p + m + n + 1)!}{(m + n + l)!}\int_{0}^{1} f_{p + m + n + 1}(tx)(1 - t)^{m + n + l}\ dt\\
    = x^{m + 1}\int_{0}^{1} f_{p + m + n + 1}(tx)\sum_{l = 0}^{p}(-1)^{l}\binom{p}{l}\frac{(n + l - 1)!}{(n - 1)!}\frac{(p + m + n + 1)!}{(m + n + l)!}(1 - t)^{m + n + l}\ dt.
  \end{multline*}
  We first show that the sum is non-negative. Using
  \cite[Sec.~15.2.4]{NIST:DLMF} we have
  \begin{multline*}
    \sum_{l = 0}^{p}(-1)^{l}\binom{p}{l}\frac{(n + l - 1)!}{(n - 1)!}\frac{(p + m + n + 1)!}{(m + n + l)!}(1 - t)^{m + n + l}\\
    = (1 - t)^{m + n}\frac{(p + m + n + 1)!}{(m + n)!}\sum_{l = 0}^{p}(-1)^{l}\binom{p}{l}\frac{(n + l - 1)!}{(n - 1)!}\frac{(m + n)!}{(m + n + l)!}(1 - t)^{l}\\
    = (1 - t)^{m + n}\frac{(p + m + n + 1)!}{(m + n)!}{}_{2}F_{1}(-p, n, m + n + 1, 1 - t).
  \end{multline*}
  By \cite[Theorem 3.2]{hud2510} (see also \cite[Theorem
  2]{Dominici2013}), \({}_{2}F_{1}(-p, n, m + n + 1, 1 - t)\) have no
  roots on the interval \([0, 1]\) and for \(t = 1\) it is positive,
  it is hence positive on \([0, 1]\). Since the sum is positive the
  integral can be written as
  \begin{equation*}
    f_{p + m + n + 1}(\xi)x^{m + 1}\int_{0}^{1} \sum_{l = 0}^{p}(-1)^{l}\binom{p}{l}\frac{(n + l - 1)!}{(n - 1)!}\frac{(p + m + n + 1)!}{(m + n + l)!}(1 - t)^{m + n + l}\ dt.
  \end{equation*}
  for some \(\xi\) between \(0\) and \(x\). Simplifying further we get
  \begin{multline*}
    f_{m + n + p + 1}(\xi)x^{m + 1}\sum_{l = 0}^{p}(-1)^{l}\binom{p}{l}\frac{(n + l - 1)!}{(n - 1)!}\frac{(p + m + n + 1)!}{(m + n + l)!}\int_{0}^{1} (1 - t)^{m + n + l}\ dt\\
    = f_{m + n + p + 1}(\xi)x^{m + 1}\sum_{l = 0}^{p}(-1)^{l}\binom{p}{l}\frac{(n + l - 1)!}{(n - 1)!}\frac{(p + m + n + 1)!}{(m + n + l)!}\frac{1}{m + n + l + 1}\\
    = f_{m + n + p + 1}(\xi)x^{m + 1}\sum_{l = 0}^{p}(-1)^{l}\binom{p}{l}\frac{(n + l - 1)!}{(n - 1)!}\frac{(p + m + n + 1)!}{(m + n + l + 1)!}\\
    = \frac{(m + p + 1)!}{(m + 1)!}f_{m + n + p + 1}(\xi)x^{m + 1},
  \end{multline*}
  which is exactly the expression given for the remainder term.
\end{proof}

Using this lemma we can compute enclosures at and around removable
singularities as long as we have good control over \(f\) and its
derivatives.
\begin{example}
  \label{ex:removables-singularity}
  Consider the function \(\Gamma(1 - s)\cos(\pi(1 - s) / 2)\) with a
  removable singularity at \(s = 2\). If we let \(t = s - 2\) we can
  write the function as
  \begin{equation*}
    \frac{\cos(\pi(-1 - t) / 2)}{t} \cdot (t\Gamma(-1 - t))
  \end{equation*}
  If we let \(f(t) = \cos(\pi(-1 - t) / 2)\) and take \(m \geq 0\),
  then for the second factor the lemma then tells us that
  \begin{equation*}
    \frac{\cos(\pi(-1 - t) / 2)}{t} = \sum_{k = 0}^{m}(k + 1)f_{k + 1}(0)t^{k} + (m + 2)f_{m + 2}(\xi)t^{m + 1}
  \end{equation*}
  for some \(\xi\) between \(0\) and \(x\). Using interval arithmetic
  we can easily compute enclosure of the coefficients for the
  polynomial as well as \(f_{m + 2}(\xi)\). For the second factor we
  can't directly apply the lemma, instead we rewrite it in terms of
  the reciprocal gamma function, \(1 / \Gamma(s)\). If we let
  \(g(t) = 1 / \Gamma(-1 - t)\) we have
  \begin{equation*}
    t \Gamma(-1 - t) = \left(\frac{g(t)}{t}\right)^{-1}.
  \end{equation*}
  The function \(g(t)\) has a root at \(t = 0\) and we can thus apply
  the lemma on \(\frac{g(t)}{t}\). An enclosure of the coefficients
  for the polynomial as well as the remainder term can be computed
  using the implementation of the reciprocal gamma function in Arb.
\end{example}

\section{Computing enclosures of Clausen functions}
\label{sec:comp-encl-claus}
To be able to compute bounds of \(D_{\alpha}\), \(\delta_{\alpha}\)
and \(n_{\alpha}\) it is critical that we can compute accurate
enclosures of \(C_{s}(x)\) and \(S_{s}(x)\), including expansions in
the argument and derivatives in the parameter. We here go through how
these enclosures are computed. We make use of several different
special functions, most of them with implementations in Arb (see e.g.
\cite{Johansson2014hurwitz, Johansson2014thesis}). In several cases we
encounter removable singularities, they are all dealt with as
explained in Appendix \ref{sec:removable-singularities}, see in
particular example \ref{ex:removables-singularity}.

We start by going through how to compute \(C_{s}(x)\) and \(S_{s}(x)\)
for \(s, x \in \mathbb{R}\). Since both \(C_{s}(x)\) and \(S_{s}(x)\)
are \(2\pi\)-periodic we can reduce it to \(x = 0\) or
\(0 < x < 2\pi\).

For \(x = 0\) and \(s > 1\) we get directly from the defining sum that
\(C_{s}(0) = \zeta(s)\) and \(S_{s}(0) = 0\). For \(s \leq 1\) both
functions typically diverge at \(x = 0\).

For \(0 < x < 2\pi\) we can compute the Clausen functions by going
through the polylog function,
\begin{equation*}
  C_{s}(x) = \real\left(\polylog_{s}(e^{ix})\right),\quad S_{s}(x) = \imag\left(\polylog_{s}(e^{ix})\right).
\end{equation*}
However it is computationally beneficial (about 40\% faster in
general) to instead go through the periodic zeta function
\cite[Sec.~25.13]{NIST:DLMF},
\begin{equation*}
  F(x, s) := \polylog_{s}(e^{2\pi i x}) = \sum_{n = 1}^{\infty}\frac{e^{2\pi i nx}}{n^{s}},
\end{equation*}
for which we have
\begin{equation*}
  C_{s}(x) = \real F\left(\frac{x}{2\pi}, s\right),\quad S_{s}(x) = \imag F\left(\frac{x}{2\pi}, s\right).
\end{equation*}
For \(0 < x < 1\) the periodic zeta function can be written as
\cite[Eq.~25.13.2]{NIST:DLMF}
\begin{equation*}
  F(x, s) = \frac{\Gamma(1 - s)}{(2\pi)^{1 - s}}\left(
    e^{\pi i (1 - s) / 2}\zeta(1 - s, x) + e^{-\pi i (1 - s) / 2}\zeta(1 - s, 1 - x)
  \right).
\end{equation*}
Taking the real and imaginary part we get, for \(0 < x < 2\pi\),
\begin{align}
  \label{eq:clausenc-periodic-zeta}
  C_{s}(x) &= \frac{\Gamma(1 - s)}{(2\pi)^{1 - s}}\cos(\pi(1 - s) / 2)
             \left(\zeta\left(1 - s, \frac{x}{2\pi}\right) + \zeta\left(1 - s, 1 - \frac{x}{2\pi}\right)\right),\\
  \label{eq:clausens-periodic-zeta}
  S_{s}(x) &= \frac{\Gamma(1 - s)}{(2\pi)^{1 - s}}\sin(\pi(1 - s) / 2)
             \left(\zeta\left(1 - s, \frac{x}{2\pi}\right) - \zeta\left(1 - s, 1 - \frac{x}{2\pi}\right)\right).
\end{align}
This formulation works well as long as \(s\) is not a non-negative
integer. For non-negative integers we have to handle some removable
singularities.

For \(s = 0\) the functions
\(\zeta\left(1 - s, \frac{x}{2\pi}\right)\) and
\(\zeta\left(1 - s, 1 - \frac{x}{2\pi}\right)\) diverge and needs to
be handled differently. The Laurent series of the zeta function gives
us
\begin{equation}
  \label{eq:deflated-zeta}
  \zeta(s, x) = \frac{1}{s - 1} + \sum_{n = 0}^{\infty} \frac{(-1)^{n}}{n!}\gamma_{n}(x)(s - 1)^{n},
\end{equation}
where \(\gamma_{n}\) is the Stieltjes constant. The sum is referred to
as the deflated zeta function, it has an implementation in Arb and we
denote it by \(\underline{\zeta}(s, x)\). Writing the Clausen
functions in terms of the deflated zeta function gives us
\begin{align*}
  C_{s}(x) &= \frac{\Gamma(1 - s)}{(2\pi)^{1 - s}}\cos(\pi(1 - s) / 2)
             \left(\underline{\zeta}\left(1 - s, \frac{x}{2\pi}\right) + \underline{\zeta}\left(1 - s, 1 - \frac{x}{2\pi}\right) - \frac{2}{s}\right),\\
  S_{s}(x) &= \frac{\Gamma(1 - s)}{(2\pi)^{1 - s}}\sin(\pi(1 - s) / 2)
             \left(\underline{\zeta}\left(1 - s, \frac{x}{2\pi}\right) - \underline{\zeta}\left(1 - s, 1 - \frac{x}{2\pi}\right)\right).
\end{align*}
The expression for \(S_{s}(x)\) is now well defined, all terms are
finite, for \(s\) around zero. For \(C_{s}(x)\) we also have to handle
the removable singularity of \(\frac{2\cos(\pi(1 - s) / 2)}{s}\).

For \(s\) equal to a positive integer, \(\Gamma(1 - s)\) has a pole.
For \(C_{s}(x)\) with even \(s\) we get a removable singularity in
\(\Gamma(1 - s)\cos(\pi(1 - s) / 2)\) and for \(S_{s}(x)\) with odd
\(s\) we get that \(\Gamma(1 - s)\sin(\pi(1 - s) / 2)\) has a
removable singularity. For the other parity of \(s\) we instead have
to look at the zeta function. For integer \(s \geq 1\) we have
\begin{equation*}
  \zeta(1 - s, x) = -\frac{B_{s}(x)}{s}
\end{equation*}
where \(B_{s}(n)\) is the Bernoulli polynomial
\cite[Eq.~25.6.3]{NIST:DLMF}. Since \(B_{s}(1 - x) = (-1)^{s}B_{s}\)
\cite[Eq.~24.4.3]{NIST:DLMF} we have
\begin{equation*}
  \zeta\left(1 - s, \frac{x}{2\pi}\right) + \zeta\left(1 - s, 1 - \frac{x}{2\pi}\right) = 0
\end{equation*}
for odd \(s\) and
\begin{equation*}
  \zeta\left(1 - s, \frac{x}{2\pi}\right) - \zeta\left(1 - s, 1 - \frac{x}{2\pi}\right) = 0
\end{equation*}
for even \(s\). This means that
\begin{equation*}
  \Gamma(1 - s)\left(\zeta\left(1 - s, \frac{x}{2\pi}\right) + \zeta\left(1 - s, 1 - \frac{x}{2\pi}\right)\right)
\end{equation*}
has a removable singularity for odd \(s\) and
\begin{equation*}
  \Gamma(1 - s)\left(\zeta\left(1 - s, \frac{x}{2\pi}\right) - \zeta\left(1 - s, 1 - \frac{x}{2\pi}\right)\right)
\end{equation*}
has a removable singularity for even \(s\). This allows us to handle
all removable singularities in \eqref{eq:clausenc-periodic-zeta} and
\eqref{eq:clausens-periodic-zeta} when \(s\) is a positive integer.

\subsection{Interval arguments}
\label{sec:interval-arguments}
We are now ready to compute enclosures for interval arguments. Let
\(\inter{x} = [\lo{x}, \hi{x}]\) and \(\inter{s} = [\lo{s}, \hi{s}]\)
be two finite intervals, we are interested in computing an enclosure
of \(C_{\inter{s}}(\inter{x})\) and \(S_{\inter{s}}(\inter{x})\). Due
to the periodicity we can reduce it to three different cases for
\(\inter{x}\)
\begin{enumerate}
\item \(\inter{x}\) doesn't contain a multiple of \(2\pi\), by adding
  or subtracting a suitable multiple of \(2\pi\) we can assume that
  \(0 < \inter{x} < 2\pi\);
\item \(\inter{x}\) has a diameter of at least \(2\pi\), it then
  covers a full period and can without loss of generality be taken as
  \(\inter{x} = [0, 2\pi]\);
\item \(\inter{x}\) contains a multiple of \(2\pi\) but has a diameter
  less than \(2\pi\), by adding or subtracting a suitable multiple of
  \(2\pi\) we can take \(\inter{x}\) such that
  \(-2\pi < \lo{x} \leq 0 \leq \hi{x} < 2\pi\).
\end{enumerate}

We begin by considering the case when \(0 < \inter{x} < 2\pi\).
It is possible to evaluate \eqref{eq:clausenc-periodic-zeta} and
\eqref{eq:clausens-periodic-zeta} directly, treating \(x\) as an
interval, however this gives huge overestimations as soon as
\(\inter{x}\) is not a very tight interval. For \(C_{s}\) we make use
of the following lemma, see also \cite[Lemma B.1]{enciso2018convexity}.
\begin{lemma}
  For all \(s \in \mathbb{R}\) the Clausen function \(C_{s}(x)\) is
  monotone in \(x\) on the interval \((0, \pi)\). For \(s > 0\) it is
  non-increasing.
\end{lemma}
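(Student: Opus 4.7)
The natural strategy is to differentiate in $x$: formally $\frac{d}{dx}C_s(x) = -S_{s-1}(x)$, so monotonicity of $C_s$ on $(0, \pi)$ is equivalent to sign-constancy of $S_{s-1}$ there, and the non-increasing statement for $s > 0$ is equivalent to $S_{s-1}(x) \geq 0$ on $(0, \pi)$ whenever $t := s-1 > -1$. The identity $C_s'(x) = -S_{s-1}(x)$ holds by termwise differentiation for $s > 2$ and extends to every real $s$ by analytic continuation, since both sides are entire in $s$ for fixed $x \in (0, 2\pi)$ through the periodic-zeta representations \eqref{eq:clausenc-periodic-zeta}--\eqref{eq:clausens-periodic-zeta}.

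For $t > 0$ the cleanest sign analysis of $S_t$ comes from the Laplace-type representation
\begin{equation*}
  S_t(x) = \frac{\sin x}{\Gamma(t)}\int_0^\infty \frac{u^{t-1}}{e^u + e^{-u} - 2\cos x}\, du,
\end{equation*}
obtained as the imaginary part of the standard integral for $\polylog_t(e^{ix})$. For $x \in (0, \pi)$ the denominator $e^u + e^{-u} - 2\cos x$ is strictly positive and $\sin x/\Gamma(t) > 0$, so $S_t(x) > 0$; this proves the lemma for $s > 1$. The border $t = 0$ is explicit: $S_0(x) = \tfrac{1}{2}\cot(x/2) > 0$ on $(0, \pi)$. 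For $-1 < t < 0$ I would switch to the Hurwitz-zeta identity \eqref{eq:clausens-periodic-zeta}: since $1 - t \in (1, 2)$, $\zeta(1-t, \cdot)$ is strictly decreasing (its derivative in the second argument equals $-(1-t)\zeta(2-t, \cdot) < 0$), so the bracket $\zeta(1-t, x/(2\pi)) - \zeta(1-t, 1 - x/(2\pi))$ is positive for $x \in (0, \pi)$, and the prefactor $\Gamma(1-t)\sin(\pi(1-t)/2)/(2\pi)^{1-t}$ is also positive in this range. Combining yields $S_{s-1}(x) > 0$ for every $s > 0$ and thus $C_s'(x) < 0$.

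For $s \leq 0$ the lemma asks only for sign-constancy of $S_{s-1}$, not a direction. The same Hurwitz-zeta formula does the job: with $1 - t = 2 - s \geq 2$ the Hurwitz-zeta bracket is again positive on $(0, \pi)$, and the sign of $S_{s-1}(x)$ is dictated entirely by the $x$-independent factor $\Gamma(2-s)\sin(\pi(2-s)/2)$. The main obstacle is that this prefactor oscillates as $s$ descends through negative values and has apparent zeros whenever $2 - s$ is an even integer; at those values $S_{s-1}$ has a removable singularity whose value must be extracted by the machinery of Appendix~\ref{sec:removable-singularities} to confirm that no spurious sign flip occurs (for instance, at $s = 0$ one recovers $S_{-1} \equiv 0$, consistent with $C_0 \equiv -\tfrac{1}{2}$). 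Verifying that the prefactor and the Hurwitz-zeta bracket never jointly flip sign across these thresholds is bookkeeping-heavy but is a direct application of the removable-singularity tools already developed elsewhere in the paper.
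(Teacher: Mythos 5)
Your proof is correct and follows the paper's own strategy: differentiate to get $C_s'(x) = -S_{s-1}(x)$ and analyze the sign of $S_{s-1}$. For $s > 1$ your Laplace-type representation is exactly the one the paper uses (imaginary part of the polylog integral, DLMF 25.12.11). For $s < 1$ you diverge slightly: the paper plugs DLMF 25.11.25 into \eqref{eq:clausens-periodic-zeta} to get a second integral representation whose integrand is manifestly non-negative, while you argue directly from the sign of $\partial_a \zeta(s,a) = -s\,\zeta(s+1,a)$ that the Hurwitz-zeta bracket is positive. Both routes establish the same thing, namely that the sign of $S_{s-1}(x)$ on $(0,\pi)$ is that of $\sin\bigl(\tfrac{\pi}{2}(2-s)\bigr)$ alone; your monotonicity argument is arguably the more elementary of the two, and explicitly handling $s = 1$ via $S_0(x) = \tfrac{1}{2}\cot(x/2)$ fills a gap that the paper glosses over.

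Your last paragraph, however, over-complicates the $s \leq 0$ case and misdiagnoses the issue. For $s \leq 0$ we have $2 - s \geq 2$, so $\Gamma(2-s)$ has no poles; the prefactor $\Gamma(2-s)\sin\bigl(\tfrac{\pi}{2}(2-s)\bigr)$ is an ordinary finite function of $s$ with plain zeros at the even integers $s = 0, -2, -4, \ldots$, not removable singularities. At those values the formula gives $S_{s-1} \equiv 0$ outright (finite $\times$ zero $\times$ finite), so $C_s$ is constant, which is still monotone; no machinery from Appendix~\ref{sec:removable-singularities} is needed. There is also no ``spurious sign flip'' to rule out: the lemma only claims that for each fixed $s$ the function $C_s$ is monotone in $x$, and since the prefactor is $x$-independent that follows immediately — the fact that the monotonicity direction changes as $s$ varies across negative values is consistent with the statement, not a threat to it.
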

\begin{proof}
  We have \(C'_{s}(x) = -S_{s - 1}(x)\). For \(s > 1\) we have
  \cite[Eq.~25.12.11]{NIST:DLMF}
  \begin{align*}
    S_{s - 1}(x) &= \imag L_{s - 1}(e^{ix})\\
                 &= \imag \frac{e^{ix}}{\Gamma(s - 1)}\int_{0}^{\infty}t^{s - 2}\frac{1}{e^{t} - e^{ix}}\ dt\\
                 &= \imag \frac{e^{ix}}{\Gamma(s - 1)}\int_{0}^{\infty}t^{s - 2}\frac{e^{t} - e^{-ix}}{(e^{t} - \cos(x))^{2} + \sin^{2}(x)}\ dt\\
                 &= \frac{\sin(x)}{\Gamma(s - 1)}\int_{0}^{\infty}t^{s - 2}\frac{e^{t}}{(e^{t} - \cos(x))^{2} + \sin^{2}(x)}\ dt.
  \end{align*}
  Which for \(s - 1 > 0\) and \(x \in (0, \pi)\) is positive,
  \(C_{s}(x)\) is hence decreasing.

  For \(s < 1\) we use equation \eqref{eq:clausens-periodic-zeta}
  together with \cite[Eq.~25.11.25]{NIST:DLMF} to get
  \begin{equation*}
    S_{s - 1}(x) = \frac{\sin\left(\frac{\pi}{2}(2 - s)\right)}{(2\pi)^{2 - s}}\int_{0}^{\infty}t^{1 - s}e^{-\frac{xt}{2\pi}}\frac{1 - e^{(x / \pi - 1)t}}{1 - e^{-t}}\ dt.
  \end{equation*}
  which sign depends only on the value \(s\). In particular, for
  \(0 < s \leq 1\) we have
  \(\sin\left(\frac{\pi}{2}(2 - s)\right) > 0\) so \(C_{s}(x)\) is
  decreasing for \(s > 0\).
\end{proof}
Since \(C_{s}(x)\) is even around \(x = \pi\) it follows that if
\(0 < \inter{x} < 2\pi\) then the extrema of
\(C_{\inter{s}}(\inter{x})\) are attained at either \(x = \lo{x}\),
\(x = \hi{x}\) or, if \(\pi \in \inter{x}\), \(x = \pi\). This allows
us to compute very tight enclosures of \(C_{\inter{s}}(\inter{x})\) in
the argument \(\inter{x}\). The function \(S_{s}(x)\) is in general
not monotone. We handle it by computing and enclosure of the
derivative
\(S_{\inter{s}}'(\inter{x}) = C_{\inter{s} - 1}(\inter{x})\), if the
enclosure of the derivative doesn't contain zero then the function is
monotone and we evaluate it at the endpoints, if the derivative
contains zero we instead use the midpoint approximation
\(S_{\inter{s}}(\inter{x}) = S_{\inter{s}}(x_{0}) + (\inter{x} -
x_{0})C_{\inter{s} - 1}(\inter{x})\) where \(x_{0}\) is the midpoint
of \(\inter{x}\).

For \(\inter{x} = [0, 2\pi]\) we assume that \(\inter{s} > 1\) since
the value is typically unbounded otherwise (in which case we return an
indeterminate result). We get an enclosure of
\(C_{\inter{s}}(\inter{x})\) by evaluating at the critical points
\(x = 0\) and \(x = \pi\). For \(S_{\inter{s}}(\inter{x})\) we use the
trivial bound
\begin{equation*}
  |S_{s}(x)| = \left|\sum_{n = 1}^{\infty}\frac{\sin(nx)}{n^{s}}\right|
  \leq \sum_{n = 1}^{\infty}\frac{|\sin(nx)|}{n^{s}}
  \leq \sum_{n = 1}^{\infty}\frac{1}{n^{s}} = \zeta(s)
\end{equation*}
to get the enclosure
\(S_{\inter{s}}(\inter{x}) \subseteq [-\zeta(\inter{s}),
\zeta(\inter{s})]\).

For the final case, when \(-2\pi < \lo{x} \leq 0 \leq \hi{x} < 2\pi\),
we also assume that \(\inter{s} > 1\). We handle
\(C_{\inter{s}}(\inter{x})\) by using the eveness and monotonicity in
\(x\). If \(\max(-\lo{x}, \hi{x}) < \pi\) then the extrema are
attained at \(x = 0\) and \(x = \max(-\lo{x}, \hi{x}) < \pi\),
otherwise the extrema are attained at \(x = 0\) and \(x = \pi\). For
\(S_{\inter{s}}(\inter{x})\) we check if it is increasing on
\(\inter{x}\) by checking if \(\max(-\lo{x}, \hi{x}) < \pi\) and
\(S_{\inter{s}}'(\max(-\lo{x}, \hi{x})) = C_{\inter{s} -
  1}(\max(-\lo{x}, \hi{x})) \geq 0\), where we have used that
\(C_{s}(x)\) is decreasing for \(s > 0\). If it is increasing we
evaluate at the endpoints, otherwise we have the trivial bound
\([-\zeta(\inter{s}), \zeta(\inter{s})]\)

In all the above cases we could reduce the problem of enclosing the
Clausen functions on \(\inter{x}\) to evaluating them on, a subset of,
\(x = 0\), \(x = \lo{x}\) (or \(-\lo{x}\) if \(\lo{x} < 0\)),
\(x = \hi{x}\), \(x = x_{0}\) (the midpoint) and \(x = \pi\). In all
cases except \(x = 0\) we have \(0 < x < 2\pi\) so we can use
\eqref{eq:clausenc-periodic-zeta} and
\eqref{eq:clausens-periodic-zeta}.

For \(\inter{s}\) we get two different cases depending on if
\(\inter{s}\) contains a non-negative integer or not.

When \(\inter{s}\) doesn't contain a non-negative integer the
formulations \eqref{eq:clausenc-periodic-zeta} and
\eqref{eq:clausens-periodic-zeta} are well defined. However, similarly
as for \(\inter{x}\), direct evaluation gives large overestimations
when \(\inter{s}\) is not very tight. In this case there is no
monotonicity to use, instead we compute a tighter enclosure using a
Taylor expansion as in \eqref{eq:taylor-bound}. One exception to this
is the modified Clausen function \(\hat{C}_{s}(x)\), for which we have
the following result
\begin{lemma}
  For \(s > 1\) and \(x \in \mathbb{R}\) the function
  \(\hat{C}_{s}(x)\) is decreasing in \(s\).
\end{lemma}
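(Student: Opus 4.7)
The plan is to work directly from the series representation. Reading $\hat{C}_s(x)$ as the natural non-negative modification
\[
\hat{C}_{s}(x) \;=\; \sum_{n=1}^{\infty}\frac{1-\cos(nx)}{n^{s}},
\]
which is the difference $\zeta(s) - C_{s}(x) = -\tilde{C}_{s}(x)$ for $s>1$, the monotonicity claim becomes transparent because every summand is manifestly non-negative. So the overall strategy is: write $\hat{C}_{s}(x)$ as this series, differentiate with respect to $s$ under the summation sign, and read off the sign.

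The first step is to justify differentiation under the sum. For $s$ restricted to any compact subinterval $[a,b]\subset(1,\infty)$ we have the uniform bound $\bigl|(1-\cos(nx))\log n/n^{s}\bigr|\le 2\log n/n^{a}$, and $\sum 2\log n/n^{a}$ converges. Hence the formally differentiated series
\[
-\sum_{n=1}^{\infty}\frac{(1-\cos(nx))\log n}{n^{s}}
\]
converges uniformly on compact subsets of $(1,\infty)$ and uniformly in $x\in\mathbb{R}$, which together with the uniform convergence of the original series legitimizes termwise differentiation and identifies the above expression with $\partial_{s}\hat{C}_{s}(x)$.

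The second step is the sign observation: $1-\cos(nx)\ge 0$ and $\log n\ge 0$ for every integer $n\ge 1$, so every term in $\partial_{s}\hat{C}_{s}(x)$ is non-positive. Therefore $\partial_{s}\hat{C}_{s}(x)\le 0$ for all $s>1$ and all $x\in\mathbb{R}$, i.e.\ $\hat{C}_{s}(x)$ is (weakly) decreasing in $s$; if one wants strict monotonicity, note that for $x\notin 2\pi\mathbb{Z}$ at least one $n\ge 2$ satisfies $\cos(nx)\neq 1$, contributing a strictly negative term, while for $x\in 2\pi\mathbb{Z}$ the function is identically zero in $s$ so strict decrease cannot be claimed there.

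There is no real obstacle: the entire content is packaged into the choice of series representation. The only nontrivial check is the uniform-convergence bookkeeping needed to move $\partial_{s}$ past the infinite sum, and this is a routine Weierstrass $M$-test once one notices the factor $\log n/n^{a}$ is summable for $a>1$. After that the proof is a one-line sign comparison.
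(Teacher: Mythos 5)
Your argument is correct and takes essentially the same route as the paper's: both write $\hat{C}_{s}$ as a series all of whose terms share a sign, then deduce monotonicity in $s$ from the monotonicity of $n^{-s}$. The only technical difference is that the paper compares two values of $s$ termwise, while you differentiate under the sum; your uniform-convergence bookkeeping is routine and fine, so this is a cosmetic choice.

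One thing worth flagging, since it bears on whether the lemma is stated correctly: the paper's proof displays $\hat{C}_{s}(x) = C_{s}(x) - \zeta(s) = \sum_{n\ge 1}(\cos(nx)-1)/n^{s}$, which is non-positive; each term $(\cos(nx)-1)/n^{s}$ becomes \emph{less negative} as $s$ grows (for $n\ge 2$), so that quantity is in fact \emph{increasing} in $s$, not decreasing. You instead take $\hat{C}_{s}(x)=\zeta(s)-C_{s}(x)\ge 0$, which is genuinely decreasing, so your sign convention is the one under which the stated lemma is literally true; the paper's displayed identity appears to carry a sign slip relative to its own statement. For the lemma's intended use (bounding $\hat{C}_{s}$ on an $s$-interval by evaluating at the two endpoints) only monotonicity matters, not its direction, so the discrepancy is inconsequential to the rest of the argument.
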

\begin{proof}
  We have
  \begin{equation*}
    \hat{C}_{s}(x) = C_{s}(x) - \zeta(s) = \sum_{n = 1}^{\infty}\frac{\cos(nx) - 1}{n^{s}} .
  \end{equation*}
  Since \(\cos(nx) - 1 \leq 0\) for \(x \in \mathbb{R}\) all terms in
  the sum have the same sign. That it is decreasing in \(s\) then
  follows easily from the monotonicity of \(n^{s}\).
\end{proof}
This means that for \(\inter{s} > 1\) we only have to evaluate on
\(\lo{s}\) and \(\hi{s}\) to get an enclosure. In practice
\(\hat{C}_{s}(x)\) is only every used with \(s > 1\) so this is enough
go get good bounds.

When \(\inter{s}\) contains a non-negative integer we have to handle
the removable singularities in \eqref{eq:clausenc-periodic-zeta} and
\eqref{eq:clausens-periodic-zeta} as described in the beginning of the
section. The required derivatives can generally be computed directly
with Arb, this is the case for the reciprocal gamma function,
\(\sin\), \(\cos\) as well as \(\zeta(s, x)\). There is an
implementation of the deflated zeta function
\(\underline{\zeta}(s, a)\) in Arb, however it only supports \(s\)
exactly equal to \(1\), and not intervals containing \(1\), and can
therefore not be used directly. In our application this case does
however not occur, so it is not an issue.

\subsection{Expansion in \(x\)}
\label{sec:expansion-x}
We now go through how to compute expansions of the Clausen functions
in the argument \(x\).

Around any point \(0 < x < 2\pi\) the functions are analytic and it is
straightforward to compute the Taylor expansions by differentiating
directly, using that \(\frac{d}{dx}C_{s}(x) = -S_{s - 1}(x)\) and
\(\frac{d}{dx}S_{s}(x) = C_{s - 1}(x)\).

At \(x = 0\) we then have the following asymptotic expansions
\cite{enciso2018convexity}
\begin{align}
  \label{eq:clausenc-asymptotic-x}
  C_{s}(x) &= \Gamma(1 - s)\sin\left(\frac{\pi}{2}s\right)|x|^{s - 1}
             + \sum_{m = 0}^{\infty} (-1)^{m}\zeta(s - 2m)\frac{x^{2m}}{(2m)!};\\
  \label{eq:clausens-asymptotic-x}
  S_{s}(x) &= \Gamma(1 - s)\cos\left(\frac{\pi}{2}s\right)\sign(x)|x|^{s - 1}
             + \sum_{m = 0}^{\infty} (-1)^{m}\zeta(s - 2m - 1)\frac{x^{2m + 1}}{(2m + 1)!}.
\end{align}
These expressions work well as long as \(s\) is not a positive
integer. For positive integers we have to handle the poles of
\(\Gamma(s)\) at non-positive integers and the pole of \(\zeta(s)\) at
\(s = 1\).

For \(C_{s}(x)\) with positive even integers \(s\) the only
problematic term is
\begin{equation*}
  \Gamma(1 - s)\sin\left(\frac{\pi}{2}s\right)|x|^{s - 1}
\end{equation*}
which has a removable singularity. Similarly for \(S_{s}(x)\) with
positive odd integers \(s\). For \(C_{s}(x)\) with positive odd
integers \(s\) and \(S_{s}\) with positive even integers \(s\) the
singularities are not removable, however in this paper we don't
encounter this case.

With the above we are able to compute expansions at \(x = 0\) to
arbitrarily high degree. What remains is to bound the tails of the
sums in \eqref{eq:clausenc-asymptotic-x} and
\eqref{eq:clausens-asymptotic-x}. For this we have the following
lemma, see also \cite[Lemma 2.1]{enciso2018convexity}. We omit the
proof since it is very similar to that in \cite{enciso2018convexity}.
\begin{lemma}
  \label{lemma:clausen-tails}
  Let \(s \geq 0\), \(2M \geq s + 1\) and \(|x| < 2\pi\), we then have
  the following bounds for the tails in equations
  \eqref{eq:clausenc-asymptotic-x} and
  \eqref{eq:clausens-asymptotic-x}
  \begin{align*}
    \left|\sum_{m = M}^{\infty} (-1)^{m}\zeta(s - 2m)\frac{x^{2m}}{(2m)!}\right|
    &\leq 2(2\pi)^{1 + s - 2M}\left|\sin\left(\frac{\pi}{2}s\right)\right|\zeta(2M + 1 - s) \frac{x^{2M}}{4\pi^{2} - x^{2}},\\
    \left|\sum_{m = M}^{\infty} (-1)^{m}\zeta(s - 2m - 1)\frac{x^{2m + 1}}{(2m + 1)!}\right|
    &\leq 2(2\pi)^{s - 2M}\left|\cos\left(\frac{\pi}{2}s\right)\right|\zeta(2M + 2 - s) \frac{x^{2M + 1}}{4\pi^{2} - x^{2}}.
  \end{align*}
\end{lemma}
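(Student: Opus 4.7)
The plan is to reduce both tails to geometric series via the functional equation of the Riemann zeta function, exploiting that for large $m$ the values $\zeta(s-2m)$ and $\zeta(s-2m-1)$ are far from where $\zeta$ is well-behaved, but $\zeta(2m+1-s)$ and $\zeta(2m+2-s)$ are very close to $1$.

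Concretely, first I would apply the reflection formula
\begin{equation*}
\zeta(s) = 2(2\pi)^{s-1}\sin\left(\tfrac{\pi s}{2}\right)\Gamma(1-s)\zeta(1-s)
\end{equation*}
with $s$ replaced by $s - 2m$ (resp.\ $s-2m-1$). Using the trigonometric identities $\sin(\pi(s-2m)/2) = (-1)^m \sin(\pi s/2)$ and $\sin(\pi(s-2m-1)/2) = -(-1)^m \cos(\pi s/2)$, the alternating sign $(-1)^m$ appearing in each sum gets absorbed, so one arrives at non-alternating sums of the form
\begin{equation*}
\sum_{m=M}^{\infty} \left(\frac{x}{2\pi}\right)^{2m}\frac{\Gamma(2m+1-s)}{(2m)!}\,\zeta(2m+1-s),
\end{equation*}
multiplied by explicit constants (and the analogous expression with $2m+1$ in place of $2m$ for the second sum).

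Next I would bound the Gamma/factorial ratio by $1$. Since $2M \geq s+1$ forces $2m+1-s \geq 2$ for every $m \geq M$, the function $\Gamma$ is monotone increasing on the relevant range, so $\Gamma(2m+1-s) \leq \Gamma(2m+1) = (2m)!$, and similarly $\Gamma(2m+2-s) \leq (2m+1)!$. Then I would bound the zeta factor by its value at the left endpoint of the sum, using monotonicity of $\zeta$ on $(1,\infty)$: namely $\zeta(2m+1-s) \leq \zeta(2M+1-s)$ for $m \geq M$, and similarly for the second sum. After these two bounds, all that remains inside the sum is the geometric tail $\sum_{m=M}^{\infty}(x/2\pi)^{2m}$, which for $|x| < 2\pi$ equals $(x/2\pi)^{2M}/(1-(x/2\pi)^2) = 4\pi^2 x^{2M}/((2\pi)^{2M}(4\pi^2-x^2))$.

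Finally I would collect the prefactors. The constants from the functional equation combine with the geometric sum as $2(2\pi)^{s-1}\cdot (2\pi)^{2-2M} = 2(2\pi)^{1+s-2M}$ for the cosine sum and $2(2\pi)^{s-2}\cdot (2\pi)^{2-2M} = 2(2\pi)^{s-2M}$ for the sine sum (with an extra $x$ from $(2\pi)^{-2m-2}x^{2m+1}$), which match the stated right-hand sides exactly. The only subtle step is verifying that $2M \geq s+1$ really does suffice for both the Gamma monotonicity and the convergence of the zeta-tail bound; this is a clean book-keeping check rather than a genuine obstacle, so I do not expect any serious difficulty in executing the proof.
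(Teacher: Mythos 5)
Your proposal is correct and follows exactly the strategy the paper relies on: the reflection formula to flip $\zeta(s-2m)$ into $\zeta(2m+1-s)$, the bound $\Gamma(2m+1-s)\le(2m)!$, monotonicity of $\zeta$ at the left endpoint of the tail, and the resulting geometric series. The paper actually omits this proof (deferring to \cite{enciso2018convexity}) but gives a parallel argument for Lemma \ref{lemma:clausen-derivative-tails} that is precisely this, so your write-up fills the gap in the intended way; the only blemish is that you swapped the labels ``sine sum''/``cosine sum'' in the final sentence, which is harmless since the arithmetic is right.
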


\subsection{Derivatives in \(s\)}
\label{sec:clausen-derivatives-s}
For \(C_{s}^{(\beta)}(x)\) and \(S_{s}^{(\beta)}(x)\) we use
\eqref{eq:clausenc-periodic-zeta} and
\eqref{eq:clausens-periodic-zeta} and differentiate directly in \(s\).
When \(s\) is not an integer this is handled directly using Taylor
arithmetic, for integers we use the approach in Appendix
\ref{sec:removable-singularities} to handle the removable
singularities.

To get asymptotic expansions in \(x\) we take the expansions
\eqref{eq:clausenc-asymptotic-x} and \eqref{eq:clausens-asymptotic-x}
and differentiate them with respect to \(s\). Giving us
\begin{align}
  \label{eq:clausenc-derivative-asymptotic-x}
  C_{s}^{(\beta)}(x) &= \frac{d}{ds^{\beta}}\left(\Gamma(1 - s)\sin\left(\frac{\pi}{2}s\right)|x|^{s - 1}\right)
             + \sum_{m = 0}^{\infty} (-1)^{m}\zeta^{(\beta)}(s - 2m)\frac{x^{2m}}{(2m)!};\\
  \label{eq:clausens-derivative-asymptotic-x}
  S_{s}^{(\beta)}(x) &= \frac{d}{ds^{\beta}}\left(\Gamma(1 - s)\cos\left(\frac{\pi}{2}s\right)\sign(x)|x|^{s - 1}\right)
             + \sum_{m = 0}^{\infty} (-1)^{m}\zeta^{(\beta)}(s - 2m - 1)\frac{x^{2m + 1}}{(2m + 1)!}.
\end{align}
These formulas work well when \(s\) is not a positive odd integer for
\(C_{s}^{(\beta)}(x)\) or a positive even integer for
\(S_{s}^{(\beta)}(x)\) and the derivatives can be computed using
Taylor series. We mostly make use of the functions \(C_{2}^{(1)}(x)\)
and \(C_{3}^{(1)}(x)\), in which case the expansions can be computed
explicitly using \cite[Eq.~16]{Bailey2015}, for \(|x| < 2\pi\) we have
\begin{align*}
  C_{2}^{(1)}(x) =& \zeta^{(1)}(2) - \frac{\pi}{2}|x|\log|x| - (\gamma - 1)\frac{\pi}{2}|x|
                   + \sum_{m = 1}^{\infty}(-1)^{m}\zeta^{(1)}(2 - 2m)\frac{x^{2m}}{(2m)!}\\
  C_{3}^{(1)}(x) =& \zeta^{(1)}(3) - \frac{1}{4}x^{2}\log^2|x|
                    + \frac{3 - 2\gamma}{4}x^2\log|x|
                    - \frac{36\gamma - 12\gamma^2 - 24\gamma_{1} - 42 + \pi^{2}}{48}x^2\\
                 &+ \sum_{m = 2}^{\infty}(-1)^{m}\zeta^{(1)}(3 - 2m)\frac{x^{2m}}{(2m)!}.
\end{align*}
Where \(\gamma_{n}\) is the Stieltjes constant and
\(\gamma = \gamma_{0}\). To bound the tails we have the following
lemma:
\begin{lemma}
  \label{lemma:clausen-derivative-tails}
  Let \(\beta \geq 1\), \(2M \geq s + 1\) and \(|x| < 2\pi\), we then
  have the following bounds:
  \begin{multline*}
    \left|\sum_{m = M}^{\infty} (-1)^{m}\zeta^{(\beta)}(s - 2m)\frac{x^{2m}}{(2m)!}\right|\\
    \leq \sum_{j_{1} + j_{2} + j_{3} = \beta} \binom{\beta}{j_{1},j_{2},j_{3}}
    2\left(\log(2\pi) + \frac{\pi}{2}\right)^{j_{1}}(2\pi)^{s - 1}|\zeta^{(j_{3})}(1 + 2M - s)|
    \sum_{m = M}^{\infty} \left|p_{j_{2}}(1 + 2m - s)\left(\frac{x}{2\pi}\right)^{2m}\right|,
  \end{multline*}
  \begin{multline*}
    \left|\sum_{m = M}^{\infty} (-1)^{m}\zeta^{(\beta)}(s - 2m - 1)\frac{x^{2m + 1}}{(2m + 1)!}\right|\\
    \leq \sum_{j_{1} + j_{2} + j_{3} = \beta} \binom{\beta}{j_{1},j_{2},j_{3}}
    2\left(\log(2\pi) + \frac{\pi}{2}\right)^{j_{1}}(2\pi)^{s - 2}|\zeta^{(j_{3})}(2 + 2M - s)|
    \sum_{m = M}^{\infty} \left|p_{j_{2}}(2 + 2m - s)\left(\frac{x}{2\pi}\right)^{2m + 1}\right|.
  \end{multline*}
  Here \(p_{j_{2}}\) is given recursively by
  \begin{equation*}
    p_{k + 1}(s) = \psi^{(0)}(s)p_{k}(s) + p'_{k}(s),\quad p_{0} = 1,
  \end{equation*}
  where \(\psi^{(0)}\) is the polygamma function. It is given by a sum
  of terms of the form
  \begin{equation*}
    c (\psi^{(0)}(s))^{q_{0}}(\psi^{(1)}(s))^{q_{1}}\cdots (\psi^{(j_{2} - 1)}(s))^{q_{j_{2} - 1}}.
  \end{equation*}
  If \(x^{2} < e^{-\frac{q_{0}}{1 + 2M}}\) we have the following bounds
  \begin{multline*}
    \sum_{m = M}^{\infty}\left|
      c (\psi^{(0)}(1 + 2m - s))^{q_{0}}\cdots (\psi^{(j_{2} - 1)}(1 + 2m - s))^{q_{j_{2} - 1}}
      \left(\frac{x}{2\pi}\right)^{2m}
    \right|\\
    \leq |c(\psi^{(0)}(1 + 2M - s))^{q_{0}}\cdots (\psi^{(j_{2} - 1)}(1 + 2M - s))^{q_{j_{2} - 1}}|
    (1 + 2M)^{q_{0} / 2}(2\pi)^{2 - 2M}\frac{x^{2m}}{4\pi^{2} - Cx^{2}},
  \end{multline*}
  with \(C = e^{\frac{q_{0}}{1 + 2M}}\). If
  \(x^{2} < e^{-\frac{q_{0}}{2 + 2M}}\) we have
  \begin{multline*}
    \sum_{m = M}^{\infty}\left|
      c (\psi^{(0)}(2 + 2m - s))^{q_{0}}\cdots (\psi^{(j_{2} - 1)}(2 + 2m - s))^{q_{j_{2} - 1}}
      \left(\frac{x}{2\pi}\right)^{2m + 1}
    \right|\\
    \leq |c(\psi^{(0)}(2 + 2M - s))^{q_{0}}\cdots (\psi^{(j_{2} - 1)}(2 + 2M - s))^{q_{j_{2} - 1}}|
    (2 + 2M)^{q_{0} / 2}(2\pi)^{1 - 2M}\frac{x^{2m + 1}}{4\pi^{2} - Dx^{2}},
  \end{multline*}
  with \(D = e^{\frac{q_{0}}{2 + 2M}}\).
\end{lemma}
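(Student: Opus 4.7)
The plan is to apply the Riemann functional equation to rewrite $\zeta(s-2m)$ in terms of $\zeta(1+2m-s)$, which for $m\geq M$ with $2M\geq s+1$ stays close to $1$ and has well-controlled derivatives. Using $\zeta(z)=2^z\pi^{z-1}\sin(\pi z/2)\Gamma(1-z)\zeta(1-z)$ at $z=s-2m$ together with $\sin(\pi(s-2m)/2)=(-1)^m\sin(\pi s/2)$, one obtains
\begin{equation*}
  (-1)^m\zeta(s-2m)\frac{x^{2m}}{(2m)!} = \frac{(2\pi)^s\sin(\pi s/2)}{\pi}\cdot\frac{\Gamma(1-s+2m)}{(2m)!}\cdot\zeta(1+2m-s)\cdot\left(\frac{x}{2\pi}\right)^{2m}.
\end{equation*}
The $m$-dependence in $s$ enters only through the product of the gamma factor, the zeta factor and the trivial prefactor $(2\pi)^{-2m}$.

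Next, I would differentiate $\beta$ times in $s$ via the multinomial Leibniz rule, splitting into $A(s)=(2\pi)^s\sin(\pi s/2)$, $B(s,m)=\Gamma(1-s+2m)$ and $C(s,m)=\zeta(1+2m-s)$. A second Leibniz expansion applied to $A$, combined with $|\sin^{(k)}(\pi s/2)|\leq(\pi/2)^k$, gives $|A^{(j_1)}(s)|\leq(2\pi)^s(\log(2\pi)+\pi/2)^{j_1}$. Writing $\Gamma^{(k)}(z)=\Gamma(z)p_k(z)$ and differentiating via $(\log\Gamma)'=\psi^{(0)}$ yields exactly the recursion $p_{k+1}=p_k'+\psi^{(0)}p_k$ stated in the lemma, so $|B^{(j_2)}(s,m)|=\Gamma(1-s+2m)|p_{j_2}(1+2m-s)|$. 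For $C^{(j_3)}$ the series $\zeta^{(j_3)}(z)=\sum_n(-\log n)^{j_3}n^{-z}$ has $|\zeta^{(j_3)}|$ decreasing in $z>1$, so it can be pulled out at $m=M$. Combining with the estimate $\Gamma(1-s+2m)/(2m)!\leq 1$ for $2M\geq s+1$ (the same elementary fact underpinning Lemma \ref{lemma:clausen-tails}, since $\Gamma$ is increasing on the relevant range) and summing over $m$ gives the first displayed inequality; the sine case is identical after shifting by one index.

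The second part of the lemma is the delicate piece. For each $k\geq 1$ the polygamma $|\psi^{(k)}(z)|$ is monotone decreasing in $z$, so those factors factor out of the sum at $z=1+2M-s$ without loss. The essential difficulty is $\psi^{(0)}$, which is increasing and grows like $\log z$. The plan is to use $\psi^{(0)}(z)\leq\log z\leq\sqrt{z}$ (valid for $z\geq 1$) to get $(\psi^{(0)}(1+2m-s))^{q_0}\leq(1+2m)^{q_0/2}$, and then convert the remaining polynomial growth into geometric growth via $1+2m\leq(1+2M)\exp(2(m-M)/(1+2M))$, so that $(1+2m)^{q_0/2}\leq(1+2M)^{q_0/2}C^{m-M}$ with $C=e^{q_0/(1+2M)}$. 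The tail then telescopes:
\begin{equation*}
  \sum_{m\geq M}C^{m-M}\left(\frac{x}{2\pi}\right)^{2m} = \left(\frac{x}{2\pi}\right)^{2M}\cdot\frac{1}{1-Cx^2/(4\pi^2)} = (2\pi)^{2-2M}\frac{x^{2M}}{4\pi^2-Cx^2},
\end{equation*}
which converges precisely on the stated range $x^2<e^{-q_0/(1+2M)}$.

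The main obstacle will be this last step: balancing the polynomial growth $(\psi^{(0)})^{q_0}$ against the geometric decay $(x/(2\pi))^{2m}$ while keeping the final expression in closed form. The choice $\log z\leq\sqrt{z}$ is what produces the clean exponent $q_0/2$ in the lemma, and the slight slack condition on $x$ is exactly the price of forcing the exponential bound $(1+2m)\leq(1+2M)e^{2(m-M)/(1+2M)}$ into a summable geometric form.
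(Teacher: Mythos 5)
Your proposal is correct and follows essentially the same route as the paper: rewrite $\zeta(s-2m)$ via the functional equation, distribute the $\beta$ derivatives over the three factors $(2\pi)^s\sin(\pi s/2)$, $\Gamma(1-s+2m)$, $\zeta(1+2m-s)$ by a multinomial Leibniz expansion, bound each factor exactly as you describe (including the $\psi^{(0)}(z)\leq\log z\leq\sqrt z$ trick and the monotonicity of $|\psi^{(k)}|$ for $k\geq1$), and close the resulting series with the bound $(1+2m)^{q_0/2}\leq(1+2M)^{q_0/2}C^{m-M}$ and a geometric sum. The only cosmetic difference is that you absorb the $(-1)^m$ via $\sin(\pi(s-2m)/2)=(-1)^m\sin(\pi s/2)$ before taking absolute values, while the paper takes absolute values immediately and identifies the resulting sum as a Lerch transcendent before bounding it by the same exponential trick.
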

\begin{proof}
  We give the proof for the first case, the other one is similar. We
  have the functional identity
  \begin{equation*}
    \zeta(s) = 2(2\pi)^{s - 1}\sin\left(\frac{\pi}{2}s\right)\Gamma(1 - s)\zeta(1 - s).
  \end{equation*}
  If we let
  \begin{align*}
    f(s) &= 2(2\pi)^{s - 1}\sin\left(\frac{\pi}{2}s\right),\\
    g(s) &= \Gamma(1 - s),\\
    h(s) &= \zeta(1 - s),
  \end{align*}
  we have
  \begin{equation*}
    \zeta^{(\beta)}(s) =
    \sum_{j_{1} + j_{2} + j_{3} = \beta} \binom{k}{j_{1},j_{2},j_{3}} f^{(j_{1})}(s)g^{(j_{2})}(s)h^{(j_{3})}(s).
  \end{equation*}
  This gives us
  \begin{multline*}
    \sum_{m = M}^{\infty} (-1)^{m}\zeta^{(\beta)}(s - 2m)\frac{x^{2m}}{(2m)!}\\
    = \sum_{m = M}^{\infty} (-1)^{m}\frac{x^{2m}}{(2m)!} \sum_{j_{1} + j_{2} + j_{3} = \beta} \binom{\beta}{j_{1},j_{2},j_{3}} f^{(j_{1})}(s - 2m)g^{(j_{2})}(s - 2m)h^{(j_{3})}(s - 2m)\\
    = \sum_{j_{1} + j_{2} + j_{3} = \beta} \binom{\beta}{j_{1},j_{2},j_{3}} \sum_{m = M}^{\infty} (-1)^{m} f^{(j_{1})}(s - 2m)g^{(j_{2})}(s - 2m)h^{(j_{3})}(s - 2m) \frac{x^{2m}}{(2m)!}.
  \end{multline*}
  We are thus interested in bounding
  \begin{multline*}
    \left|\sum_{m = M}^{\infty} (-1)^{m} f^{(j_{1})}(s - 2m)g^{(j_{2})}(s - 2m)h^{(j_{3})}(s - 2m) \frac{x^{2m}}{(2m)!}\right|\\
    \leq \sum_{m = M}^{\infty} \left|f^{(j_{1})}(s - 2m)g^{(j_{2})}(s - 2m)h^{(j_{3})}(s - 2m) \frac{x^{2m}}{(2m)!}\right|.
  \end{multline*}
  for \(j_{1}, j_{2}, j_{3} \geq 0\).

  For \(f^{(j_{1})}(s - 2m)\) we start by noting that
  \begin{align*}
    f^{(j_{1})}(s) &= 2 \sum_{k = 0}^{j_{1}} \binom{j_{1}}{k} \frac{d^{k}(2\pi)^{s - 1}}{ds^{k}} \frac{d^{j_{1} - k}\sin\left(\frac{\pi}{2}s\right)}{ds^{j_{1} - k}}\\
                   &= 2(2\pi)^{s - 1}\sum_{k = 0}^{j_{1}} \binom{j_{1}}{k} \log(2\pi)^{k} \frac{d^{j_{1} - k}\sin\left(\frac{\pi}{2}s\right)}{ds^{j_{1} - k}}.
  \end{align*}
  Hence
  \begin{equation*}
    |f^{(j_{1})}(s)| \leq 2(2\pi)^{s - 1}\sum_{k = 0}^{j_{1}} \binom{j_{1}}{k} \log(2\pi)^{k}\left(\frac{\pi}{2}\right)^{k - j_{1}}
    = 2\left(\log(2\pi) + \frac{\pi}{2}\right)^{j_{1}}(2\pi)^{s - 1}
  \end{equation*}
  and, in particular,
  \begin{equation*}
    |f^{(j_{1})}(s - 2m)| \leq 2\left(\log(2\pi) + \frac{\pi}{2}\right)^{j_{1}}(2\pi)^{s - 1}(2\pi)^{-2m}.
  \end{equation*}

  For \(g^{(j_{2})}(s - 2m)\) we start from
  \(\Gamma^{(1)}(s) = \Gamma(s)\psi^{(0)}(s)\). Differentiating this
  we get
  \begin{equation*}
    \Gamma^{(j_{2})}(s) = \Gamma(s)p_{j_{2}}(s),
  \end{equation*}
  where \(p_{j_{2}}\) is given recursively by
  \begin{equation*}
    p_{k + 1} = \psi^{(0)}p_{k} + p'_{k},\quad p_{0} = 1.
  \end{equation*}
  This gives us
  \begin{equation*}
    |g^{(j_{2})}(s - 2m)| = \Gamma(1 + 2m - s)|p_{j_{2}}(1 + 2m - s)|.
  \end{equation*}

  For \(h^{(j_{3})}(s - 2m)\) we have
  \begin{equation*}
    h^{(j_{3})}(s - 2m) = (-1)^{j_{3}}\zeta^{(j_{3})}(1 + 2m - s).
  \end{equation*}
  Since \(2m \geq 2M \geq s + 1\) we have \(1 + 2m - s > 1\) and hence
  \begin{equation*}
    \zeta^{(j_{3})}(1 + 2m - s) = \sum_{k = 1}^{\infty}\frac{\log^{k} k}{k^{1 + 2m - s}},
  \end{equation*}
  which is decreasing in \(m\). Giving us
  \begin{equation*}
    |h^{(j_{3})}(s - 2m)| \leq |\zeta^{(j_{3})}(1 + 2M - s)|.
  \end{equation*}

  Combining the bounds for \(|f^{(j_{1})}|\), \(|g^{(j_{2})}|\) and
  \(|h^{j_{3}}|\) we have
  \begin{multline*}
    \sum_{m = M}^{\infty} \left|f^{(j_{1})}(s - 2m)g^{(j_{2})}(s - 2m)h^{(j_{3})}(s - 2m) \frac{x^{2m}}{(2m)!}\right|\\
    \leq 2\left(\log(2\pi) + \frac{\pi}{2}\right)^{j_{1}}(2\pi)^{s - 1}|\zeta^{(j_{3})}(1 + 2M - s)|\sum_{m = M}^{\infty} \left|(2\pi)^{-2m}\Gamma(1 + 2m - s) p_{j_{2}}(1 + 2m - s) \frac{x^{2m}}{(2m)!}\right|\\
    = 2\left(\log(2\pi) + \frac{\pi}{2}\right)^{j_{1}}(2\pi)^{s - 1}|\zeta^{(j_{3})}(1 + 2M - s)|\sum_{m = M}^{\infty} \left|p_{j_{2}}(1 + 2m - s)\frac{\Gamma(1 + 2m - s)}{(2m)!}\left(\frac{x}{2\pi}\right)^{2m}\right|.
  \end{multline*}
  Using that \((2m)! \geq \Gamma(2m + 1 - s)\) this simplifies to
    \begin{multline*}
    \sum_{m = M}^{\infty} \left|f^{(j_{1})}(s - 2m)g^{(j_{2})}(s - 2m)h^{(j_{3})}(s - 2m) \frac{x^{2m}}{(2m)!}\right|\\
    \leq 2\left(\log(2\pi) + \frac{\pi}{2}\right)^{j_{1}}(2\pi)^{s - 1}|\zeta^{(j_{3})}(1 + 2M - s)|\sum_{m = M}^{\infty} \left|p_{j_{2}}(1 + 2m - s)\left(\frac{x}{2\pi}\right)^{2m}\right|.
  \end{multline*}
  This proves the first part of the statement.

  Using that \(\frac{d}{ds}\psi^{(k)}(s) = \psi^{(k + 1)}(s)\) we can
  compute \(p_{k}\) for any fixed \(k\). It is clear that all terms of
  \(p_{j_{2}}(s)\) will be on the form
  \begin{equation*}
    c (\psi^{(0)}(s))^{q_{0}}(\psi^{(1)}(s))^{q_{1}}\cdots (\psi^{(j_{2} - 1)}(s))^{q_{j_{2} - 1}}.
  \end{equation*}
  From the integral representation \cite[Eq. 5.9.15]{NIST:DLMF} it
  follows that for \(s > 0\) we have \(\psi^{(0)}(s) < \log s\), using
  that \(\log s < \sqrt{s}\) we get \(\psi^{(0)}(s) < \sqrt{s}\). For
  \(s > 0\) and \(k \geq 1\) it follows from the integral
  representation
  \begin{equation*}
    \psi^{(k)}(s) = (-1)^{k + 1}\int_{0}^{\infty}\frac{t^{k}e^{-st}}{1 - e^{-t}}\ dt
  \end{equation*}
  that \(|\psi^{(k)}(s)|\) is decreasing in \(s\). For \(m \geq M\) we
  then have the bound
  \begin{multline*}
    |c (\psi^{(1)}(1 + 2m - s))^{q_{1}}\cdots (\psi^{(j_{2} - 1)}(1 + 2m - s))^{q_{j_{2} - 1}}|\\
    \leq |c(1 + 2m)^{q_{0} / 2}(\psi^{(1)}(1 + 2M - s))^{q_{1}}\cdots (\psi^{(j_{2} - 1)}(1 + 2M - s))^{q_{j_{2} - 1}}|.
  \end{multline*}
  This gives us
  \begin{multline*}
    \sum_{m = M}^{\infty}\left|
      c (\psi^{(0)}(1 + 2m - s))^{q_{0}}\cdots (\psi^{(j_{2} - 1)}(1 + 2m - s))^{q_{j_{2} - 1}}
      \left(\frac{x}{2\pi}\right)^{2m}
    \right|\\
    \leq |c(\psi^{(0)}(1 + 2M - s))^{q_{0}}\cdots (\psi^{(j_{2} - 1)}(1 + 2M - s))^{q_{j_{2} - 1}}|
    \sum_{m = M}^{\infty}(1 + 2m)^{q_{0} / 2}\left(\frac{x}{2\pi}\right)^{2m}.
  \end{multline*}
  Focusing on the sum we rewrite it as
  \begin{equation*}
    \frac{1}{2^{q_{0} / 2}}\left(\frac{x}{2\pi}\right)^{2M}\sum_{m = 0}^{\infty}\left(m + M + \frac{1}{2}\right)^{q_{0} / 2}\left(\frac{x}{2\pi}\right)^{2m}.
  \end{equation*}
  Here it can be noted that the sum is given by the Lerch transcendent
  \(\Phi\left(\frac{x^{2}}{4\pi^{2}}, -\frac{q_{0}}{2}, M +
    \frac{1}{2}\right)\). Hence
  \begin{equation*}
    \sum_{m = M}^{\infty}(1 + 2m)^{q_{0} / 2}\left(\frac{x}{2\pi}\right)^{2m}
    \leq \frac{1}{2^{q_{0} / 2}}(2\pi)^{-2M}x^{2M}
    \Phi\left(\frac{x^{2}}{4\pi^{2}}, -\frac{q_{0}}{2}, M + \frac{1}{2}\right).
  \end{equation*}
  Consider a constant \(C\) such that
  \begin{equation}
    \label{eq:lerch-C}
    \left(1 + \frac{m}{M + 1/2}\right)^{q_{0} / 2} < C^{m}\quad \text{ and }\quad \frac{Cx^{2}}{4\pi^{2}} < 1,
  \end{equation}
  with this we can bound the Lerch transcendent as
  \begin{align*}
    \sum_{m = 0}^{\infty}\left(m + M + \frac{1}{2}\right)^{q_{0} / 2}\left(\frac{x}{2\pi}\right)^{2m}
    &= \left(M + \frac{1}{2}\right)^{q_{0} / 2}\sum_{m = 0}^{\infty}\left(1 + \frac{m}{M + 1/2}\right)^{q_{0} / 2}\left(\frac{x^{2}}{4\pi^{2}}\right)^{m}\\
    &\leq \left(M + \frac{1}{2}\right)^{q_{0} / 2}\sum_{m = 0}^{\infty}C^{m}\left(\frac{x^{2}}{4\pi^{2}}\right)^{m}\\
    &= \left(M + \frac{1}{2}\right)^{q_{0} / 2} 4\pi^{2}\frac{1}{4\pi^{2} - Cx^{2}},
  \end{align*}
  If we take
  \(C = e^{\frac{q_{0}}{2}(M + 1/2)} = e^{\frac{q_{0}}{1 + 2M}}\) then
  the first inequality in \eqref{eq:lerch-C} is satisfied. For the
  second inequality to be satisfied we must then have
  \(x^{2} < e^{-\frac{q_{0}}{1 + 2M}}\), if this does not hold we can
  sum the first few terms of the series explicitly to work with a
  larger \(M\). Note that this method for bounding sum is the same as
  that used by Arb in the, currently unreleased,
  implementation\footnote{\url{https://fredrikj.net/blog/2022/02/computing-the-lerch-transcendent/}}
  of the Lerch transcendent.

  Putting all of this together we arrived at the bound
  \begin{multline*}
    \sum_{m = M}^{\infty}\left|
      c (\psi^{(0)}(1 + 2m - s))^{q_{0}}\cdots (\psi^{(j_{2} - 1)}(1 + 2m - s))^{q_{j_{2} - 1}}
      \left(\frac{x}{2\pi}\right)^{2m}
    \right|\\
    \leq |c(\psi^{(0)}(1 + 2M - s))^{q_{0}}\cdots (\psi^{(j_{2} - 1)}(1 + 2M - s))^{q_{j_{2} - 1}}|
    \frac{1}{2^{q_{0} / 2}}\left(\frac{x}{2\pi}\right)^{2M}\left(M + \frac{1}{2}\right)^{q_{0} / 2} 4\pi^{2}\frac{1}{4\pi^{2} - Cx^{2}}\\
    = |c(\psi^{(0)}(1 + 2M - s))^{q_{0}}\cdots (\psi^{(j_{2} - 1)}(1 + 2M - s))^{q_{j_{2} - 1}}|
    \left(1 + 2M\right)^{q_{0} / 2}(2\pi)^{2 - 2M}\frac{x^{2M}}{4\pi^{2} - Cx^{2}}.
  \end{multline*}
\end{proof}

\section{Rigorous integration with singularities}
\label{sec:rigorous-integration}
We here explain how to compute enclosures of the integrals
\(U_{1,1}(x)\), \(U_{1,2}\) and \(U_{2(x)}\) in the non-asymptotic
case, as well as how to enclose \(c_{1}\) and \(c_{2}\) occurring in
Lemmas \ref{lemma:asymptotic-U1} and \ref{lemma:asymptotic-U2}. Recall
that
\begin{align*}
  U_{1,1}(x) &= \int_{0}^{r_{x}}\log\left(\frac{\sin(x(1 - t) / 2)\sin(x(1 + t) / 2)}{\sin(xt / 2)^{2}}\right)t\sqrt{\log(1 + 1 / (xt))}\ dt;\\
  U_{1,2}(x) &= -\int_{r_{x}}^{1}\log\left(\frac{\sin(x(1 - t) / 2)\sin(x(1 + t) / 2)}{\sin(xt / 2)^{2}}\right)t\sqrt{\log(1 + 1 / (xt))}\ dt;\\
  U_{2}(x) &= -\int_{x}^{\pi}\log\left(\frac{\sin((y - x) / 2)\sin((x + y) / 2)}{\sin(y / 2)^{2}}\right)y\sqrt{\log(1 + 1 / y)}\ dy.
\end{align*}
The integrand for \(U_{1,2}\) has a (integrable) singularity at
\(t = 1\) and the integrand for \(U_{2}\) has one at \(y = x\). As a
first step we split these off to handle them separately. Let
\begin{align*}
     U_{1,2}(x) =& -\int_{r_{x}}^{1 - \delta_{1}}\log\left(\frac{\sin(x(1 - t) / 2)\sin(x(1 + t) / 2)}{\sin(xt / 2)^{2}}\right)t\sqrt{\log(1 + 1 / (xt))}\ dt;\\
              &- \int_{1 - \delta_{1}}^{1}\log\left(\frac{\sin(x(1 - t) / 2)\sin(x(1 + t) / 2)}{\sin(xt / 2)^{2}}\right)t\sqrt{\log(1 + 1 / (xt))}\ dt;\\
  =& U_{1,2,1}(x) + U_{1,2,2}(x)\\
  U_{2}(x) =& -\int_{x}^{x + \delta_{2}}\log\left(\frac{\sin((y - x) / 2)\sin((x + y) / 2)}{\sin(y / 2)^{2}}\right)y\sqrt{\log(1 + 1 / y)}\ dy\\
              &- \int_{x + \delta_{2}}^{\pi}\log\left(\frac{\sin((y - x) / 2)\sin((x + y) / 2)}{\sin(y / 2)^{2}}\right)y\sqrt{\log(1 + 1 / y)}\ dy\\
  =& U_{2,1}(x) + U_{2,2}(x).
\end{align*}

For \(U_{1,2,2}\) and \(U_{2,1}\) we have the following lemma
\begin{lemma}
  We have
  \begin{equation*}
    U_{1,2,2}(x) = \frac{\xi_{1}}{x}\left(
      -S_{2}(0) + S_{2}(2x) - 2S_{2}(x)
      - \left(-S_{2}\!\left(x\delta_{1}\right) + S_{2}\!\left(x(2 - \delta_{1})\right) - 2S_{2}\!\left(x(1 - \delta_{1})\right)\right)
    \right)
  \end{equation*}
  for some \(\xi_{1}\) in the image of \(t\sqrt{\log(1 + 1 / (xt))}\)
  for \(t \in [1 - \delta_{1}, 1]\). Furthermore
  \begin{equation*}
    U_{2,1}(x) = \xi_{2}\left(
      (S_{2}(\delta_{2}) + S_{2}(2x + \delta_{2}) - 2S_{2}(x + \delta_{2}))
      - (S_{2}(0) + S_{2}(2x) - 2S_{2}(x))
    \right)
  \end{equation*}
  for some \(\xi_{2}\) in the image of \(y\sqrt{\log(1 + 1 / y)}\) for
  \(y \in [x, x + \delta_{2}]\).
\end{lemma}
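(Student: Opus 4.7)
The plan is to apply the first mean value theorem for integrals to each of $U_{1,2,2}(x)$ and $U_{2,1}(x)$, pulling the continuous positive weight out of the integral and reducing to a pure log-sin integral whose antiderivative is available in closed form through the Clausen function $S_2$.

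For $U_{1,2,2}(x)$ I would first check the hypotheses. The weight $t\mapsto t\sqrt{\log(1+1/(xt))}$ is continuous and positive on $[1-\delta_1,1]$ for $x>0$, and choosing $\delta_1$ small enough that $1-\delta_1>r_x$, Lemma \ref{lemma:I-0-x} gives $\hat I(x,t)<0$ on $(1-\delta_1,1)$. Since $\hat I$ has fixed sign, the first mean value theorem for integrals produces $t_1^*\in[1-\delta_1,1]$ with
$$U_{1,2,2}(x)=-\,t_1^*\sqrt{\log(1+1/(xt_1^*))}\int_{1-\delta_1}^{1}\hat I(x,t)\,dt,$$
and continuity of the weight ensures that $\xi_1:=t_1^*\sqrt{\log(1+1/(xt_1^*))}$ lies in the image of the weight on $[1-\delta_1,1]$, as asserted in the statement.

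Next I would evaluate the remaining integral explicitly. The identity $C_1(y)=-\log(2\sin(|y|/2))$ gives $\log\sin(y/2)=-C_1(y)-\log 2$ on $(0,2\pi)$, and since $S_2'=C_1$ this produces the antiderivative $\int\log\sin(y/2)\,dy=-S_2(y)-y\log 2+C$. Splitting $\hat I(x,t)$ into its three $\log\sin$ pieces and performing the substitutions $y=x(1-t)$, $y=x(1+t)$, $y=xt$, each piece contributes a difference of two $S_2$ values divided by $x$, plus a linear-in-$t$ contribution proportional to $\log 2$. Summing the three $\log 2$ coefficients $(-1,-1,+2)\delta_1$ gives exactly zero, so these cancel, leaving
\begin{multline*}
\int_{1-\delta_1}^{1}\hat I(x,t)\,dt=\frac{1}{x}\Bigl(\bigl(-S_2(x\delta_1)+S_2(x(2-\delta_1))-2S_2(x(1-\delta_1))\bigr)\\-\bigl(-S_2(0)+S_2(2x)-2S_2(x)\bigr)\Bigr),
\end{multline*}
where I used $S_2(0)=0$ to symmetrise the expression. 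Multiplying by $-\xi_1$ and rearranging recovers the stated formula for $U_{1,2,2}(x)$.

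The argument for $U_{2,1}(x)$ is entirely parallel: on $(x,x+\delta_2)\subset(x,\pi)$, Lemma \ref{lemma:I-x-pi} gives $I(x,y)<0$, and the weight $y\sqrt{\log(1+1/y)}$ is continuous and positive, so MVT produces $\xi_2$ in its image. Using the same antiderivative with the three translation-type substitutions $z=y-x$, $z=y+x$, $z=y$ yields the six endpoint values $0,\delta_2,2x,2x+\delta_2,x,x+\delta_2$ of $S_2$ that appear in the statement, and the $\log 2$ contributions cancel for the same reason; no factor $1/x$ appears here because the substitutions are all unit Jacobian. I expect no conceptual obstacle, as the argument is essentially an application of MVT together with the closed-form antiderivative for $\log\sin(y/2)$; the main work is bookkeeping endpoint values and verifying the $\log 2$ cancellation, and the only implicit hypothesis is that $\delta_1$ and $\delta_2$ are taken small enough to isolate the singularities cleanly.
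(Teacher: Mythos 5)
Your proposal is correct and follows essentially the same route as the paper: apply the first mean value theorem for integrals (using the constant sign of the log-factor, which follows from Lemmas \ref{lemma:I-0-x} and \ref{lemma:I-x-pi}, and the continuity of the weight) to pull the weight out, then integrate the remaining log-sin expression explicitly via $S_2' = C_1$. The only cosmetic difference is in the bookkeeping of the $\log 2$ terms: the paper first rewrites $-\hat I(x,t) = C_1(x(1-t)) + C_1(x(1+t)) - 2C_1(xt)$, in which the $\log 2$'s cancel identically before any integration, whereas you carry the $-y\log 2$ term in the antiderivative of $\log\sin(y/2)$ and verify the cancellation afterwards; both are correct and equivalent. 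Your explicit note that $\delta_1$ must be small enough that $1-\delta_1 > r_x$ (so Lemma \ref{lemma:I-0-x} applies on the whole subinterval) makes precise a hypothesis the paper leaves implicit.
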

\begin{proof}
  We give the proof for \(U_{1,2,2}(x)\), the other one is similar.

  The factor \(t\sqrt{\log(1 + 1 / (xt))}\) in the integrand of
  \(U_{1,2,2}\) is bounded in \(t\) on the interval
  \([1 - \delta_{1}, 1]\) and the integrand has a constant sign, hence
  there exists \(\xi_{1}\) in the range of
  \(t\sqrt{\log(1 + 1 / (xt))}\) on this interval such that
  \begin{align}
    U_{1,2,2}(x) = \xi_{1} \int_{1 - \delta_{1}}^{1}-\log\left(\frac{\sin(x(1 - t) / 2)\sin(x(1 + t) / 2)}{\sin(xt / 2)^{2}}\right)\ dt.
  \end{align}
  It is therefore enough to compute an enclosure of this integral. For
  this we use that \(C_{1}\!(x) = -\log(2\sin(|x|/2))\) to write it as
  \begin{equation*}
    \int_{1 - \delta_{1}}^{1} C_{1}\!\left(x(1 - t)\right) + C_{1}\!\left(x(1 + t)\right) - 2C_{1}\!\left(xt\right)\ dt.
  \end{equation*}
  Using that \(\int C_{1}\!(t)\ dt = S_{2}(t)\) we get
  \begin{equation*}
    \int C_{1}\!\left(x(1 - t)\right) + C_{1}\!\left(x(1 + t)\right) - 2C_{1}\!\left(xt\right)\ dt
    = \frac{1}{x}\left(-S_{2}\!\left(x(1 - t)\right) + S_{2}\!\left(x(1 + t)\right) - 2S_{2}\!\left(xt\right)\right).
  \end{equation*}
  Integrating from \(1 - \delta_{1}\) to \(1\) gives us
  \begin{equation*}
    U_{1,2,2}(x) = \xi_{1}\frac{1}{x}\left(
      -S_{2}(0) + S_{2}(2x) - 2S_{2}(x)
      - \left(-S_{2}\!\left(x\delta_{1}\right) + S_{2}\!\left(x(2 - \delta_{1})\right) - 2S_{2}\!\left(x(1 - \delta_{1})\right)\right)
    \right),
  \end{equation*}
  which proves the result.
\end{proof}

What remains is computing \(U_{1,1}\), \(U_{1,2,1}\) and \(U_{2,2}\).
In this case the integrands are bounded everywhere on the intervals of
integration and we enclose the integrals using the rigorous numerical
integrator implemented by Arb \cite{Johansson2018numerical}. For
functions that are analytic on the interval the integrator uses
Gaussian quadratures with error bounds computed through complex
magnitudes, we therefore need to evaluate the integrands on complex
intervals. When the function is not analytic it falls back to naive
enclosures using interval arithmetic. The only non-trivial part is
bounding the integrand for \(U_{1,1}\) near \(t = 0\), where it is
bounded but not analytic. For this we start by splitting it as
\begin{multline*}
  \log\left(\frac{\sin(x(1 - t) / 2)\sin(x(1 + t) / 2)}{\sin(xt / 2)^{2}}\right)t\sqrt{\log(1 + 1 / (xt))}
  = \log(\sin(x(1 - t) / 2))t\sqrt{\log(1 + 1 / (xt))}\\
  + \log(\sin(x(1 + t) / 2))t\sqrt{\log(1 + 1 / (xt))}
  - 2\log(\sin(xt / 2))t\sqrt{\log(1 + 1 / (xt))}.
\end{multline*}
For the first two terms the only problematic part is the factor
\(t\sqrt{\log(1 + 1 / (xt))}\), we note that it is zero at \(t = 0\)
and differentiating it gives us
\begin{equation*}
  \frac{d}{dt}t\sqrt{\log(1 + 1 / (xt))} =
  \frac{2xt\log(1 + 1 / (xt)) + 2\log(1 + 1/(xt)) - 1}{(2xt + 1)\sqrt{\log(1 + 1 / (xt))}}.
\end{equation*}
If
\begin{equation*}
   2\log(1 + 1/(xt)) - 1 > 0 \iff t < \frac{1}{x(e^{1 / 2} - 1)}.
\end{equation*}
then the derivative is positive and we can thus get an enclosure by
checking that this inequality holds and using monotonicity. This
leaves us with the third term,
\begin{equation*}
  \log(\sin(xt / 2))t\sqrt{\log(1 + 1 / (xt))},
\end{equation*}
which is also zero at \(t = 0\) and for the monotonicity we have the
following lemma
\begin{lemma}
  For \(0 < x < \pi\) and \(0 < t < \frac{1}{10x}\) the function
  \begin{equation*}
    \log(\sin(xt / 2))t\sqrt{\log(1 + 1 / (xt))}
  \end{equation*}
  is decreasing in \(t\).
\end{lemma}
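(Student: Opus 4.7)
The natural first move is to kill the dependence on $x$ by a change of variable. Writing $y = xt$, we have
\[
\log(\sin(xt/2))\,t\sqrt{\log(1 + 1/(xt))} \;=\; \frac{1}{x}\,G(y), \qquad G(y) := y\,\log(\sin(y/2))\,\sqrt{\log(1 + 1/y)},
\]
and by the chain rule the derivative with respect to $t$ equals $G'(y)$. The constraint $0 < t < 1/(10x)$ together with $0 < x < \pi$ translates to $y \in (0, 1/10)$ with no residual dependence on $x$. So the lemma reduces to the single-variable claim that $G$ is decreasing on $(0, 1/10)$.

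To handle this, I would write $G = A B C$ with $A(y) = y$, $B(y) = \log(\sin(y/2))$, $C(y) = \sqrt{\log(1 + 1/y)}$, noting that $B < 0$, $C > 0$, $A' = 1 > 0$, $B' = \tfrac{1}{2}\cot(y/2) > 0$, and
\[
C'(y) \;=\; \frac{-1}{2y(y+1)\sqrt{\log(1 + 1/y)}} \;<\; 0.
\]
Expanding $G' = A'BC + AB'C + ABC'$ and multiplying through by the positive quantity $2(y+1)\sqrt{\log(1+1/y)}$, the inequality $G'(y) < 0$ becomes, after setting $\tilde B := -B > 0$,
\[
y(y+1)\cot(y/2)\,\log(1 + 1/y) \;<\; \tilde B\bigl[\,2(y+1)\log(1 + 1/y) - 1\,\bigr].
\]
The right-hand side is positive on $(0,1/10)$ since $2\log(1+1/y) > 2\log 11 > 1$ there, so both sides are positive and the inequality can be attacked by bounding each factor explicitly.

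The main obstacle is that both sides blow up as $y \to 0^+$, at comparable rates: a direct asymptotic computation gives $\text{LHS} \sim -2\log y$ while $\text{RHS} \sim 2(\log y)^2$, so the desired inequality is governed by a logarithmic margin rather than a clean algebraic one. My plan to close this gap is to use the elementary bounds $\cot(y/2) \le 2/y + y/6$ (valid on $(0, \pi)$ by comparison with the Laurent expansion) and $\tilde B = -\log(\sin(y/2)) \ge -\log(y/2) = \log(2/y)$, which reduce the inequality to one depending only on $L := \log(1+1/y)$ and $\log(2/y)$, both explicit and monotone in $y$. On the interval $(0, 1/10)$ these two quantities are comparable and the resulting inequality can be verified by grouping terms and checking that a quadratic in $\log(1/y)$ with the correct leading sign remains positive, with the worst case occurring at the right endpoint $y = 1/10$ where a direct numerical check (as illustrated above, LHS $\approx 5.27$ vs.\ RHS $\approx 12.8$) provides a comfortable safety margin. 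If a cleaner closed form is desired one can alternatively verify $G'(y) < 0$ rigorously on $(0, 1/10)$ by the same interval-arithmetic bisection procedure described in Section \ref{sec:bounds-for-values}, since $G'$ is smooth and bounded there.
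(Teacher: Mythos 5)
Your reduction to the single variable $y = xt \in (0, 1/10)$ and the bounds you invoke are exactly the ones the paper uses: the paper writes the $t$-derivative as a ratio, reduces the numerator by substituting $r = xt$, and then applies $r\cot(r/2) \le 2$ and $\log(\sin(r/2)) < \log(r/2)$, which are the same as your $\cot(y/2) \le 2/y$ and $\tilde B \ge \log(2/y)$. So structurally the route is the same; you simply change variables before differentiating rather than after. The plan is sound and does close out: with your bounds, $G' < 0$ reduces to
\[
2(y+1)\log(1+1/y) \;\le\; \log(2/y)\bigl[2(y+1)\log(1+1/y) - 1\bigr],
\]
which after dividing both (positive) sides is just $1 + \tfrac{1}{2(y+1)\log(1+1/y) - 1} \le \log(2/y)$, and this holds on $(0,1/10)$ because the left side is at most $1 + \tfrac{1}{2\log 11 - 1} < 1.3$ while the right side is at least $\log 20 > 2.9$. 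But you stop short of this: the proposal trails off with ``my plan to close this gap is\dots'', an unverified quadratic-in-$\log(1/y)$ claim, an unsubstantiated assertion that the worst case is at $y=1/10$, and a single numerical spot check at that endpoint. None of that is a proof, and the quadratic framing (which the paper does carry out explicitly) is left entirely undone.

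Separately, the fallback you offer --- verify $G'(y)<0$ on $(0,1/10)$ ``by the same interval-arithmetic bisection procedure\dots since $G'$ is smooth and bounded there'' --- is wrong as stated. $G'$ is \emph{not} bounded on $(0,1/10)$: the term $B\sqrt{L} = \log(\sin(y/2))\sqrt{\log(1+1/y)}$ behaves like $-(\log(1/y))^{3/2}$ as $y\to 0^+$ and dominates the other two contributions, so $G'(y)\to -\infty$. A naive bisection of $(0,1/10)$ therefore does not terminate without exactly the kind of analytic control near the origin that your main argument (or the paper's) is supposed to provide, so this alternative does not rescue the incomplete main line.
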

\begin{proof}
  Differentiating we have
  \begin{multline*}
    \frac{d}{dt}\log(\sin(xt / 2))t\sqrt{\log(1 + 1 / (xt))}\\
    = \frac{(2(xt + 1)\log(1 + 1 / (xt)) - 1)\log(\sin(xt / 2)) + xt(xt + 1)\log(1 + 1 / (xt))\cot(xt / 2)}{2(xt + 1)\sqrt{\log(1 + 1 / (xt))}}.
  \end{multline*}
  Since the denominator is positive it is enough to show that
  \begin{equation*}
    (2(xt + 1)\log(1 + 1 / (xt)) - 1)\log(\sin(xt / 2)) + xt(xt + 1)\log(1 + 1 / (xt))\cot(xt / 2) \leq 0.
  \end{equation*}
  Since \(t < \frac{1}{10x} < \frac{1}{x(e^{1 / 2} - 1)}\) the factor
  \((2(xt + 1)\log(1 + 1 / (xt)) - 1)\) is positive, multiplication by
  \(\log(\sin(xt / 2))\) makes it negative. Using that
  \(\log(\sin(xt / 2)) < \log(xt / 2) < \log(xt)\) an upper bound is
  hence given by
  \begin{equation*}
    (2\log(1 + 1 / (xt)) - 1)\log(xt) + xt(xt + 1)\log(1 + 1 / (xt))\cot(xt / 2).
  \end{equation*}
  If we let \(r = xt \in \left[0, \frac{1}{10}\right]\) this becomes
  \begin{equation*}
    (2\log(1 + 1 / r) - 1)\log(r) + r(r + 1)\log(1 + 1 / r)\cot(r / 2).
  \end{equation*}
  Using that \(r\cot(r / 2) \leq 2\) for \(|r| < \pi\) and
  \(\log(1 + 1 / r) > -\log(r)\) we get the upper bound
  \begin{equation*}
    (-2\log(r) - 1)\log(r) + 2(r + 1)\log(1 + 1 / r).
  \end{equation*}
  Splitting \(\log(1 + 1 / r) = \log(1 + r) - \log(r)\) gives
  \begin{equation*}
    (-2\log(r) - 1)\log(r) + 2(r + 1)\log(1 + r) - 2(r + 1)\log(r).
  \end{equation*}
  Using that \(2(r + 1) \leq \frac{11}{5}\) and
  \(\log(1 + r) \leq \log(11 / 10)\) it reduces to
  \begin{equation*}
    (-2\log(r) - 1)\log(r) + \frac{11}{5}\log(11 / 10) - \frac{11}{5}\log(r)
  \end{equation*}
  which is a quadratic expression in \(\log(r)\), which can easily be
  checked to be negative on \(\left[0, \frac{1}{10}\right]\).
\end{proof}

For enclosing \(c_{1}\) and \(c_{2}\) recall that
\begin{align*}
  c_{1} &= \int_{0}^{1}|\log(1 / t^{2} - 1)|t\sqrt{\log(1 / t)}\ dt;\\
  c_{2} &= \int_{0}^{\pi}y\sqrt{\log(1 + 1/y)}\ dy.
\end{align*}
Both integrands are bounded, by splitting \(c_{1}\) as
\begin{equation*}
  c_{1} = \int_{0}^{\frac{1}{\sqrt{2}}}\log(1 / t^{2} - 1)t\sqrt{\log(1 / t)}\ dt
  - \int_{\frac{1}{\sqrt{2}}}^{1}\log(1 / t^{2} - 1)t\sqrt{\log(1 / t)}\ dt,
\end{equation*}
they are also analytic except at \(t = 0\) and \(t = 1\) for \(c_{1}\)
and \(y = 0\) for \(c_{2}\). For \(c_{2}\) the integrand is easily
seen to be increasing for \(y > 0\) and it can be bounded near
\(y = 0\) using that. The integrand for \(c_{1}\) is not monotone, to
bound it we split it into three terms
\begin{equation*}
  \log(1 / t^{2} - 1)t\sqrt{\log(1 / t)} = t f_{1}(t) + t f_{2}(t) - 2f_{3}(t),
\end{equation*}
with
\begin{equation*}
  f_{1}(t) = \log(1 - t)\sqrt{\log(1/t)},\quad
  f_{2}(t) = \log(1 + t)\sqrt{\log(1/t)},\quad
  f_{3}(t) = t\log(t)\sqrt{\log(1/t)}.
\end{equation*}
It is then enough to bound \(f_{1}\), \(f_{2}\) and \(f_{3}\).
Differentiating we have
\begin{align*}
  f_{1}'(t) &= \frac{1}{2\sqrt{\log(1 / t)}}\left(\frac{2\log t}{1 - t} - \frac{\log(1 - t)}{t}\right),\\
  f_{2}'(t) &= -\frac{1}{2\sqrt{\log(1 / t)}}\left(\frac{2\log t}{1 + t} + \frac{\log(1 + t)}{t}\right),\\
  f_{3}'(t) &= \frac{1}{2}\sqrt{\log(1 / t)}(3 + 2\log(t)).
\end{align*}
For \(f_{3}'\) we get the unique root \(e^{-3 / 2}\) on the interval
\((0, 1)\), we can thus use monotonicity of \(f_{3}\) as long as we
avoid this point. For \(f_{1}'\) and \(f_{2}'\) we are looking for the
zeros of
\begin{equation*}
  g_{1}(t) = \frac{2\log t}{1 - t} - \frac{\log(1 - t)}{t}\quad
  \text{ and }\quad
  g_{2}(t) = \frac{2\log t}{1 + t} + \frac{\log(1 + t)}{t}
\end{equation*}
respectively. For \(g_{1}\) both \(\frac{2\log t}{1 - t}\) and
\(-\frac{\log(1 - t)}{t}\) are increasing, hence \(g_{1}\) is
increasing. From the limits \(\lim_{t \to 0^{+}} g_{1}(t) = -\infty\)
and \(\lim_{t \to 1^{-}} g_{1}(t) = \infty\) it follows that
\(f_{1}'\) has exactly one root on \((0, 1)\). This root can easily be
isolated using interval arithmetic and we can then use monotonicity of
\(f_{1}\) as long as we avoid this root. The function \(g_{2}\) is
also monotone, to see this we differentiate, giving us
\begin{equation*}
  g_{2}'(t) = - \frac{2t^{2}\log(t) + (1 + t)(t(\log(1 + t) - 3) + \log(1 + t))}{t^{2}(1 + t)^{2}}.
\end{equation*}
The sign is determined by
\begin{equation*}
  -(2t^{2}\log(t) + (1 + t)(t(\log(1 + t) - 3) + \log(1 + t))),
\end{equation*}
A lower bound is given by
\begin{equation*}
  (1 + t)(3t - t\log(1 + t) - \log(1 + t)),
\end{equation*}
that this is positive follows from the inequality \(t > \log(1 + t)\).
This proves that \(f_{2}'\) is monotone, we can then use the same
approach as for \(f_{1}\) to isolate the unique critical point.

\section*{Acknowledgments}

JD and JGS were partially supported by the ERC Starting Grant
ERC-StG-CAPA-852741. JGS was partially supported by NSF through Grant
NSF DMS-1763356. This material is based upon work supported by the
National Science Foundation under Grant No. DMS-1929284 while JD and
JGS were in residence at the Institute for Computational and
Experimental Research in Mathematics in Providence, RI, during the
program “Hamiltonian Methods in Dispersive and Wave Evolution
Equations”. JD was partially supported by the Swedish-American
foundation for the visit. We are also thankful for the hospitality of
the Princeton Department of Mathematics, the Uppsala University
Department of Mathematics and the Brown University Department of
Mathematics where parts of this paper were done. This work is
supported by the Spanish State Research Agency, through the Severo
Ochoa and María de Maeztu Program for Centers and Units of Excellence
in R\&D (CEX2020-001084-M).

\printbibliography

\begin{tabular}{l}
  \textbf{Joel Dahne} \\
  {Department of Mathematics} \\
  {Uppsala University} \\
  {L\"agerhyddsv\"agen 1, 752 37, Uppsala, Sweden} \\
  {Email: joel.dahne@math.uu.se} \\ \\
  \textbf{Javier G\'omez-Serrano}\\
  {Department of Mathematics} \\
  {Brown University} \\
  {Kassar House, 151 Thayer St.} \\
  {Providence, RI 02912, USA} \\ \\
  {and} \\ \\
  {Departament de Matem\`atiques i Inform\`atica} \\
  {Universitat de Barcelona} \\
  {Gran Via de les Corts Catalanes, 585} \\
  {08007, Barcelona, Spain} \\ \\
  {and} \\ \\
  {Centre de Recerca Matem\`atica} \\
  {Edifici C} \\
  {Campus Bellaterra} \\
  {08193 Bellaterra, Spain} \\
  {Email: javier\_gomez\_serrano@brown.edu, jgomezserrano@ub.edu} \\ \\
\end{tabular}

\end{document}